\pgfplotsset{compat = newest}
\numberwithin{equation}{section}
\theoremstyle{plain}
\newtheorem{theorem}{Theorem}[section]
\newtheorem{proposition}[theorem]{Proposition}
\theoremstyle{definition}
\newtheorem{definition}[theorem]{Definition}
\newtheorem{example}[theorem]{Example}
\theoremstyle{remark}
\newtheorem{remark}[theorem]{Remark}
\newcommand\sbullet[1][.5]{\mathbin{\vcenter{\hbox{\scalebox{#1}{$\bullet$}}}}}
\newcommand{\lt}{\left}
\newcommand{\rt}{\right}
\newcommand{\e}{\text{e}}
\newcommand{\R}{\mathbb{R}}
\newcommand{\N}{\mathbb{N}}
\newcommand{\T}{\mathbb{T}}
\newcommand{\cA}{\mathcal{A}}
\newcommand{\cB}{\mathcal{B}}
\newcommand{\cC}{\mathcal{C}}
\newcommand{\cI}{\mathcal{I}}
\newcommand{\cH}{\mathcal{H}}
\newcommand{\cK}{\mathcal{K}}
\newcommand{\cM}{\mathcal{M}}
\newcommand{\cP}{\mathcal{P}}
\newcommand{\cS}{\Phi}
\newcommand{\cV}{\mathcal{V}}
\newcommand{\rd}{\mathrm{d}}
\newcommand{\supp}{{\rm supp\,}}
\newcommand{\Diam}{\textrm{Diam\,}}
\newcommand{\sT}{\mathbb{T}}
\newcommand{\abs}{{\sf abs}}
\newcommand{\Lip}{\mathcal{L}}
\newcommand{\BL}{\mathcal{BL}}
\newcommand{\TV}{{\sf TV}}
\begin{document}
\title[VE on DHGM]
  {Vlasov Equations on Directed Hypergraph Measures}
\author[Christian Kuehn and Chuang Xu]{Christian Kuehn $^{1,2}$}
\email{ckuehn@ma.tum.de (Christian Kuehn)}
\author{Chuang Xu $^{3}$}
\email{chuangxu@hawaii.edu (Chuang Xu)}
\address{$^1$
Department of Mathematics\\
Technical University of Munich, Munich\\
Garching bei M\"{u}nchen\\
85748, Germany.}
\address{$^2$
Munich Data Science Institute (MDSI)\\
Technical University of Munich, Munich\\
Garching bei M\"{u}nchen\\
85748, Germany.}
\address{$3$
Department of Mathematics\\
University of Hawai'i at M\={a}noa, Honolulu, Hawai'i\\
96822, USA
}

\date{\today}

\noindent

\begin{abstract}
In this paper we propose a framework to investigate the mean field limit (MFL) of interacting particle systems on directed hypergraphs. We provide a non-trivial measure-theoretic viewpoint and make extensions of directed hypergraphs as directed hypergraph measures (DHGMs), which are measure-valued functions on a compact metric space. These DHGMs can be regarded as hypergraph limits which include limits of a sequence of hypergraphs that are sparse, dense, or of intermediate densities. Our main results show that the Vlasov equation on DHGMs are well-posed and its solution can be approximated by empirical distributions of large networks of higher-order interactions. The results are applied to a Kuramoto network in physics, an epidemic network, and an ecological network, all of which include higher-order interactions. To prove the main results on the approximation and well-posedness of the Vlasov equation on DHGMs, we robustly generalize the method of [Kuehn, Xu. Vlasov equations on digraph measures, JDE, 339 (2022), 261--349] to higher-dimensions. 
 \end{abstract}

\keywords{Mean field limit, higher-order interaction, hypergraphs, Kuramoto model, epidemic dynamics, Lotka-Volterra systems, sparse networks.}

\maketitle
\section{Introduction}

Many science phenomena in diverse areas including epidemiology \cite{KMS19}, ecology \cite{P69}, physics \cite{N18}, social science \cite{HK02}, communication systems \cite{CS06}, etc., can be described as an interacting particle system (IPS), or equivalently, a dynamical system on networks \cite{N84,N18}. Classical works mainly focused on the dynamics of the IPS with interactions of rather simple types coupled on complete graphs \cite{N84}. More recently, studies have concentrated more on the complex dynamics caused by (i) the types of interaction,  and (ii) the heterogeneity of the underlying network \cite{B20}. In particular, certain IPS incorporating indirect interactions beyond pairwise interactions between two particles have been considered \cite{BBK20,BBK22}. For instance, in a deterministic chemical reaction network, the concentration of a species A depends on all reactions where A appears either in the reactant or in the product \cite{F19}. Such interactions are called \emph{higher order (or polyadic) interactions}~\cite{B20,T21,BGHS22}.

It is in general difficult to analyze an IPS analytically or numerically, due to the large size (e.g., the total species counts in a \emph{chemical reaction network} \cite{F19}, or the total population size of a population process \cite{MCO05}) of the IPS. One popular way to study these IPS by certain approximation is to use the so-called \emph{mean field analysis} \cite{S91}. For example, given a large system of \emph{indistinguishable} oscillators coupled on a given graph, one asks how to characterize the dynamics of a typical oscillator? It turns out that one can first sample the individual behavior of the oscillators on each node independently to derive a empirical distribution of the first $N$ oscillators, and then consider a \emph{weak limit} of the sequence of empirical distributions as $N\to\infty$. Such a limit, if it exists, is the so-called \emph{mean field limit} (MFL) \cite{S91,G13}. MFLs generally relate to a transport type PDE, the so-called Vlasov equation (VE), whose weak solution is the density of the MFL. Hence to explore the dynamics of the IPS of a large size, one can turn to study the dynamics of the VE.

Let us review some recent works, where the emphasis is the influence of the heterogeneity of the underlying graph on the dynamics of the network, in which case particles are distinguishable. Among all types of networks in epidemiology, ecology, social science, one archetypal example is the \emph{Kuramoto network}, a network of oscillators used to model diverse phenomena in e.g., physics, neuroscience, etc. \cite{N18}. For deterministic oscillator dynamics (particularly for deterministic Kuramoto network coupled on deterministic or random (e.g., Erd\H{o}s-R\'{e}nyi) graphs), the well-posedness and approximation of the MFL have been investigated \cite{KM18,CM19} for a Kuramoto network coupled on a sequence of heterogeneous graphs whose limit is a \emph{graphon}  (an integrable function on the square $[0,1]^2$) \cite{L12}. Based on the VE, one can then further study the bifurcations of the Kuramoto network \cite{CM19}. The results were further generalized to the situation, where the graph limit may not necessarily be a graphon, but e.g., a \emph{graphop} \cite{GK20}, a \emph{digraph measure} \cite{KX21}, or a limit of a sequence of sparse graphs\---a generalized graphon \cite{JPS21}. We point out that the technical approach in \cite{JPS21} is different from that in \cite{KX21}. In stochastic settings, MFL results were established for IPS based on systems of SDEs coupled also on heterogeneous graphs (particularly sparse graphs due to practical considerations) \cite{LRW19a,LRW19,ORS20}.

There are many network models with higher-order interactions, e.g., the Kuramoto-Sakaguchi network \cite{BAR16}, mathematical models based on random replicator dynamics \cite{B20}, three-player games \cite{B20}, Chua oscillator models \cite{G20}, among many others.  Effectively almost all of these models have been only been studied by direct numerical simulation or formal analytical techniques \cite{B20}. In contrast, rigorous theoretical results for MFLs of IPS with higher-order interactions are rare. The underlying topological structure for such systems are hypergraphs (see Figure~\ref{figure-hyper-graph} for the illustration of a hypergraph) \cite{MKJ20}. In \cite{BBK20}, the MFL of a Kuramoto network coupled on \emph{complete $k$-uniform} hypergraphs for finite $k\in\mathbb{N}$ was analyzed. In \cite{KH22}, \emph{quenched mean field approximation} (QMFA) of continuous time Markov chains on directed hypergraphs of uniformly bounded  ranks (for the definition of \emph{rank} of a hypergraph, we refer the reader to Definition~\ref{hyperdigraph}) was analyzed. It is noteworthy that the QMFA is also called \emph{$N$-intertwined mean field approximation (NIMFA)} \cite{VV15}. Such mean field approximation neglects \emph{dynamical correlation between nodes of the hypergraph}, which is different from the mean field limit. Our results on MFLs serve as a bridge leading to rigorous analysis of dynamics of relevant models, and provide a deeper insight into how  dynamics of a network of higher-order interactions depend on the heterogeneity of the underlying coupling graph or hypergraph.

Indeed, even from the viewpoint of graph theory, there exist limited references on hypergraph limits (a special class of hypergraph limits, is called \emph{ultralimit hypergraph} in \cite{ES12}, corresponding to the limit of dense hypergraphs) \cite{T06,G07,J08,ES12,L12,B13,Z15}, in comparison to the graph limits \cite{BS01,LS06,L12,KLS19,BS20}. It is noteworthy that hypergraph limits are defined as tensors in a recent work \cite{Z23}.

\begin{figure}[b]
\begin{center}
\begin{tikzcd}[column sep=5em,row sep=5em]
\mathbf{4} 
\arrow[dr,yellow,thick] \arrow[r,bend right=20,blue,thick] & \mathbf{3} \arrow[l,bend right=20,red,thick] \arrow[ld,bend right=20,green,thick] \arrow[d,bend right=20,blue,thick]\\
\mathbf{1}  
\arrow[ur,bend right=20,green,thick] \arrow[u,blue,thick] & \mathbf{2} \arrow[u,bend right=20,red,thick] \arrow[l,shift left=0ex,yellow] 
\end{tikzcd}
\end{center}
\caption{A directed hypergraph of $4$ vertices $\{1,2,3,4\}$ and rank $4$, two directed hyper-edges of cardinality 2 ($(1,3)$ and $(3,1)$), two directed hyper-edges of cardinality 3 ($(4,2,1)$ and $(2,3,4)$), and one hyper-edge of cardinality 4 ($(1,4,3,2)$).} 
\label{figure-hyper-graph}
\end{figure}

In this paper, we aim to propose a general framework to study the MFL of a class of IPS coupled on heterogeneous hypergraphs. For the ease of exposition, we hereby only consider dynamical systems on one hypergraph. Straightforward extension to a finitely many hypergraphs can be done, e.g., in a similar manner as showcased in \cite{KX21}.

Consider the following network dynamical system on a directed hypergraph of rank $k$: 
\begin{equation}\label{multi-hypergraph}\begin{split}
  \dot{\phi}_i^N=&h^N_i(t,\phi_i^N)+\sum_{\ell\in J}\frac{1}{N^{\ell-1}}\sum_{j_1=1}^N
  \cdots\sum_{j_{\ell-1}=1}^NW^{(\ell,N)}_{i,j_1,\ldots,j_{\ell-1}}
  g_{\ell}(t,\phi_i^N,\phi_{j_1}^N,\ldots,\phi_{j_{\ell-1}}^N),
\end{split}\end{equation}
where $J$, with $\max J=k$ independent of $N$, is a finite set of cardinalities of directed hyper-edges of cardinality $\ell$ of  the hypergraph $\mathcal{H}^{N}=(\cV^N,\mathcal{E}^{N})$ with $\cV^N$ identified with $[N]$, and for every $\ell\in J$,   
$g_{\ell}$ is the coupling function of an $\ell$-uniform sub hypergraph $\mathcal{H}^{\ell,N}=(\mathcal{V}^N,\mathcal{E}^{\ell,N})$  of $\cH$, and $W^{(\ell,N)}=(W^{(\ell,N)}_{i,j_1,\ldots,j_{\ell-1}})$ is the weight matrix associated with the $\ell$-uniform hypergraph $\mathcal{H}^{\ell,N}$;  $\phi_i^N\in\R^d$ stands for the evolutionary dynamics on the $i$-th node, and $h_i^N$ is the vector field for node $i$, for $i=1,\ldots,N$. 
Although such framework may not be suitable for modelling networks on a sequence of simplicial complexes with unbounded ranks \cite{ND22,RS23}, it already covers several important network dynamical models from different disciplines of sciences \cite{B20}. Furthermore, one may extend the results of this paper to investigate the MFL of IPS on Riemannian manifolds with higher-order interactions (see Section~\ref{sect-applications}).

To better reveal the essence of our main result (Theorem~\ref{th-approx}) on the MFL of \eqref{multi-hypergraph}, we consider a special model\---a one-dimensional Kuramoto network coupled on a \emph{higher-dimensional ring} which is a sum of two uniform hypergraphs of rank $2$ and $3$, respectively, where the associated weight matrices are given by 
$$W^{(1,N)}_{ij}=\begin{cases}
  1,\quad \text{if}\quad |i-j|=1\!\!\!\!\mod N,\\
  0,\quad \text{else},
\end{cases}\ W^{(2,N)}_{ijk}=\begin{cases}
  1,\quad \text{if}\quad |i-j|=|j-k|=1\!\!\!\!\mod N,\ k\neq i,\\
  0,\quad \text{else}.
\end{cases}$$ This is a \emph{sparse} hypergraph of rank $3$ (sparsity is specified in Definition~\ref{Def-dense-sparse}). For ease of exposition, let $h_i^N(t,u)\equiv h(t,u)$ for all $i=1,\ldots,N$, where some $h\in\mathcal{C}^1([0,T]\times\mathbb{T})$, $T>0$, and $N\in\mathbb{N}$. Let $X=\mathbb{T}$. It is readily verified that the weak limit of $W^{(\ell,N)}$ is a measure-valued function $\eta_{\ell}\colon x\mapsto\eta^x_{\ell}$ defined as follows for $\ell=1,2$: $$\eta_1^x=2\delta_x,\quad \eta_2^x=2\delta_{{(x,x)}},\quad \text{for}\quad x\in X.$$
Then the mean field limit of solutions of \eqref{multi-hypergraph} as $N\to\infty$ is an absolutely continuous w.r.t. the Lebesgue measure on the circle $\mathbb{T}$, and its density is the weak solution to the following so-called \emph{Vlasov equation} (VE):
\begin{align*}
&\frac{\partial\rho}{\partial t}(t,x,\phi)+\textrm{div}_{\phi}\left(\rho(t,x,\phi)\widehat{V}[\eta,\rho(\cdot),h](t,x,\phi)\right)=0, t\in(0,T],\ x\in X,\ \lambda\text{-a.e.}\ \phi\in \mathbb{T},\\
&\rho(0,\cdot)=\rho_0(\cdot),
  \end{align*}where
  \begin{equation*}
\begin{split}
\widehat{V}[\eta,\rho(\cdot),h](t,x,\phi)=&
\int_{X}\int_{\mathbb{T}}g_2(t,\phi,\psi_1)
\rho(t,y_{1},\psi_{1})\rd\psi_{1}\rd\eta^{x}_{1}(y_1)+\int_{X^2}\int_{\mathbb{T}}\int_{\mathbb{T}} g_3(t,\phi,\psi_1,\psi_2)\\
&\rho(t,y_{2},\psi_{2})
\rho(t,y_{1},\psi_{1})\rd\psi_2\rd\psi_{1}\rd\eta^{x}_{2}(y_1,y_2)+h(t,x,\phi).
    \end{split}
\end{equation*}
As we will see in Section~\ref{sect-applications}, one may obtain well-posedness as well as approximation of the MFL as an application of a  result for the general model \eqref{multi-hypergraph}  (Theorem~\ref{th-approx}).

In the following, we give a brief outline of the techniques to obtain the well-posedness as well as approximation for  model \eqref{multi-hypergraph}. First, we introduce the concept of \emph{directed hypergraph measures} (abbreviated as DHGM) as a generalization of directed hypergraphs. DHGM are measure-valued functions defined on a compact metric space. The definition of DHGM seems a natural extension of our definition \emph{digraph measure} introduced in \cite{KX21},  characterizing the limit of a sequence of directed graphs. We point out that such a generalization to hypergraphs is different from the hypergraph limits defined in the literature of combinatorics and graph theory \cite{ES12,L12}. In particular, we compare the difference between two DHGMs using uniform bounded Lipschitz metric, in contrast to the classical \emph{homeomorphism density} \cite{ES12,JPS21} used in the contexts of both graph limits and hypergraph limits. 
Then we introduce \emph{fiberized equation of characteristics} associated with \eqref{multi-hypergraph}. The idea of introducing such fiberized  equation of characteristics was motivated by \cite{KM18}. Using the classical theory of ODE, we obtain the well-posedness of the fiberized equation of characteristics. We define the generalized Vlasov equation as a \emph{fixed point equation} induced by the flow generated by the family of fiberized equations of characteristics.  This equation is a measure-valued differential equation (i.e., the measure at time $t$ is the push-forward of the initial measure under the flow of the equation of characteristics). Then in the light of the uniform bounded rank of DHGM, we obtain the well-posedness of the VE. To obtain the approximation of the MFL by empirical measures (atomic measures with equal mass on each point in the finite support), we rely on the recently developed result of uniform approximation of probability measures on Euclidean spaces \cite{XB19,C18,BJ22} (see also \cite{KX21}). 

Next, we outline the structure of the paper. In Section~\ref{sect-preliminaries}, we introduce the notation, provide some basics in measure theory, and define \emph{directed hypergraph measures} (DHGM) as a new generalization of directed hypergraphs with ample examples of independent interest. In Section~\ref{sect-Vlasov},  we define the \emph{fiberized equation of characteristics} associated with \eqref{multi-hypergraph}. In
 Section~\ref{sect-approximation}, we prove approximation of the MFL by a sequence of ODEs coupled on hypergraphs. 
Finally in Section~\ref{sect-applications}, we apply our results to study mean field limit of three network models of higher-order interactions in physics, epidemiology, and ecology.

\section{Preliminaries}\label{sect-preliminaries}
\subsection*{Notation}
Let $\R$ the set of real numbers and $\R_{+}$ the set of nonnegative real numbers. For $a\in\R$, let $\langle a\rangle\in[0,1)$ be its fractional part. The unit circle $\T$ is identified with $[0,1)$ via the natural projection $x\mapsto\langle x\rangle$ for $x\in\R$. For $i=1,2$, let $X_i$ be closed subsets of a finite dimensional Euclidean space.  We equip $X_i$  with the metric $d_i$ to make it a complete metric space. For instance $X_i$ can be a sphere, a torus, or any other compact subset of a Euclidean space. For any $k\in\N$ and $x\in\R^k$, let $[k]\coloneqq\{j\}_{j=1}^k$. For any measurable subset $A$ in a Euclidean space endowed with norm $|\cdot-\cdot|$, let $\text{dim}_H(A)$ denote the Hausdorff dimension of $A$, $\textrm{Diam}\ A\coloneqq\sup_{x,y\in A}|x-y|$ be its diameter; by convention, $\Diam A=0$ if the cardinality $\# A\le1$. We use $\lambda|_A$ to denote the uniform (probability) measure over $A$ whenever appropriate (e.g., when $A$ is either finite or Lebesgue measurable with a finite Lebesgue measure). We slightly abuse $\lambda$ (not restricted to any given set) for the Lebesgue measure on $\R^k$, where the dimension $k$ is deemphasized in the notation, without causing any ambiguity in the context. Let $\mathbbm{1}_A$ be the indicator function on $A$. Let $X$ be a Borel measurable space. For $k\in\mathbb{N}$, and for any Borel set $A\subseteq X^k$, let $\pi_1 A=\{z_1\in X\colon z\in A\}$ denote the projection of the collection of elements in $A$ onto their first coordinates in $X^k$. For any $a\in A$, let $\delta_x$ be the Dirac measure at the point $a\in A$. For two real-valued functions $f$ and $g$ on $X$, we denote $f\precsim g$ if there exists $C>0$ such that $ \frac{f(x)}{g(x)}\le C$, and $f\thicksim g$ if both $f\precsim g$ and $g\precsim f$.

\subsection*{Measure-valued functions}

Let $\cM_+(X_2)$ be the set of all finite Borel positive measures on $X_2$ and $\cP(X_2)\subseteq\cM_+(X_2)$ the space of all probabilities on $X_2$.  Given a reference measure $\mu_{X_2}\in\cP(X_2)$, let $\cM_{+,\abs}(X_2)\subseteq\cM_+(X_2)$ be the space of all absolutely continuous finite positive measures w.r.t. $\mu_{X_2}$. 

Let $\mathcal{B}(X_1,X_2)$ ($\mathcal{C}(X_1,X_2)$, $\mathcal{C}_{\sf b}(X_1,X_2)$, respectively) be the space of \emph{bounded} measurable functions (\emph{continuous} functions, \emph{bounded continuous} functions, respectively) from $X_1$ to $X_2$ equipped with the same uniform metric:
 \[d(f,g)=\sup_{x\in X_1}d_2(f(x),g(x)).\]Let $\mathcal{L}(X_1,X_2)\coloneqq\{g\in\mathcal{C}(X_1,X_2)\colon \mathcal{L}(g)\coloneqq\sup_{x\neq y}\frac{d_2(g(x),g(y))}{d_1(x,y)}<\infty\}$ be the space of Lipschitz continuous functions from $X_1$ to $X_2$. Hence $\mathcal{BL}(X_1,X_2)=\mathcal{B}(X_1,X_2)\cap\mathcal{L}(X_1,X_2)$ denotes the space of bounded Lipschitz continuous functions. In particular, when $X_2=\R$, we suppress $X_2$ in $\mathcal{B}(X_1,X_2)$ and simply write $\mathcal{B}(X_1)$. Similarly, we write $\mathcal{C}(X_1)$ for $\mathcal{C}(X_1,\R)$, etc. In this case, $\mathcal{B}(X_1)$, $\mathcal{C}(X_1)$ and $\mathcal{C}_{\sf b}(X_1)$ are all Banach spaces with the supremum norm $$\|f\|_{\infty}\coloneqq\sup_{x\in X_1}|f(x)|.$$
 Let $\mu_{X_1}\in\cP(X_1)$ be the reference measure on $X_1$. Let $\mathcal{BL}_1(X_2)=\{g\in \mathcal{BL}(X_2)\colon \mathcal{BL}(g)\coloneqq\|g\|_{\infty}+\mathcal{L}(g)\le1\}$ denote the set of bounded Lipschitz functions of a normalized constant. 

Define the bounded Lipschitz norm (on the space of all finite signed Borel measures):
\[\|\nu\|_{\sf BL}\coloneqq\sup_{f\in\mathcal{BL}_1(X_2)}\int_{X_1}f\rd\nu,\quad \nu\in\cM_+(X_2),\]
which induces the bounded Lipschitz distance: For $\nu^1,\ \nu^2\in\cM_+(X_2)$,
\[d_{\sf BL}(\nu^1,\nu^2)=\sup_{f\in\mathcal{BL}_1(X_2)}\int f(x)\mathrm{d}(\nu^1(x)-\nu^2(x)).\]
Recall that $(\cM_+(X_2),d_{\sf BL})$ is a complete metric space \cite{B07}.  Define the \emph{uniform bounded Lipschitz metric}:
\begin{equation}\label{fibermetricBL}
  d_{\infty}(\eta_1,\eta_2)=\sup_{x\in X}d_{\sf BL}(\eta_1^x,\eta_2^x),\quad \text{for}\quad \eta_1,\eta_2\in \mathcal{B}(X_1,\cM_+(X_2)).
\end{equation}
Hence $\mathcal{B}(X_1,\cM_+(X_2))$ and $\mathcal{C}_{{\sf b}}(X_1,\cM_+(X_2))$ equipped with the uniform bounded Lipschitz metric are complete. 

For every $\eta\in\cM_+(X_2)$, let $$\|\eta\|_{\TV}=\sup_{A\in\mathfrak{B}(X_2)}\eta(A)=\eta(X_2)$$ be the total variation \emph{norm} of $\eta$, where $\mathfrak{B}(X_2)$ is the Borel sigma algebra of $X_2$. For $\mathcal{B}(X_1,\cM_+(X_2))\ni\eta\colon x\mapsto\eta^x$, let
$$\|\eta\|\coloneqq\sup_{x\in X_1}\|\eta^x\|_{\TV}$$

Let $$\mathcal{B}_{*}(X_1,\cM_+(X_2))\coloneqq\left\{\eta\in\cB(X_1,\cM_+(X_2))\colon \int_{X_1}\eta^{x}(X_2)\rd \mu_{X_1}(x)=1\right\},$$ $$\mathcal{C}_{*}(X_1,\cM_+(X_2))\coloneqq\mathcal{B}_{*}(X_1,\cM_+(X_2))\cap \mathcal{C}(X_1,\cM_+(X_2)),$$

Let $\cI\subseteq\mathbb{R}$ be a compact interval and $k\in\N$. For $h\in\cC(\cI\times X_1\times X_2,\mathbb{R}^{k})$, let $$\|h\|_{\infty,\cI}\coloneqq\sup_{\tau\in\cI}\sup_{u\in X_1}\sup_{v\in X_2}|h(\tau,u,v)|.$$
For $\eta_{\cdot}\in\mathcal{C}(\mathcal{I},\mathcal{B}(X_1,\cM_+(X_2)))$, let $$\|\eta_{\cdot}\|=\sup_{t\in\cI}\sup_{x\in X_1}\|\eta^x_t\|_{\sf TV}$$ be the norm of the function $\eta_{\cdot}$

To construct solutions to Vlasov equations on DHGMs, we need to first define ``weak continuity'' of measure-valued functions \cite{KX21}.

\begin{definition}
For $i=1,2$, let $X_i$ be {closed subsets} of a finite dimensional Euclidean space. Assume $(X_1,\mathfrak{B}(X_1),\mu_{X_1})$ is a compact probability space.
For $$\mathcal{B}_{*}(X_1,\cM_+(X_2))\ni\eta\colon\begin{cases}
  X_1\to \cM_+(X_2),\\ x\mapsto\eta^x,
\end{cases}$$ $\eta$ is  {\emph{weakly $x$-continuous} (or \emph{weakly continuous} whenever the variable is clear from the context)} if for every $f\in\mathcal{C}_{\sf b}(X_2)$,
\[\mathcal{C}(X_1)\ni\eta(f)\colon\begin{cases}
  X_1\to\R,\\ x\mapsto\eta^x(f)\coloneqq\int_{X_2}f\rd\eta^x.
\end{cases}\]
\end{definition}

\begin{definition} For $i=1,2$, let $X_i$ be {closed subsets} of a finite dimensional Euclidean space. Assume $(X_1,\mathfrak{B}(X_1),\mu_{X_1})$ is a compact probability space.
Let $\mathcal{I}\subseteq\R$ be a compact interval. For $$\eta_{\cdot}\colon\begin{cases}
  \mathcal{I}\to \mathcal{B}(X_1,\cM_+(X_2)),\\ t\mapsto\eta_t,
\end{cases}$$ $\eta_{\cdot}$ is  {\emph{$x$-uniformly weakly $t$-continuous} (or \emph{uniformly weakly $t$-continuous} whenever the \emph{spatial} variable $x$ is clear from the context)} if for every $f\in \mathcal{C}_{\sf b}(X_2)$, $t\mapsto\eta_t^x(f)$ is continuous in $t$ uniformly in $x\in X_1$ {in the following sense:
\[\forall \epsilon>0,\quad \exists \sigma>0\quad \text{such that}\quad |t_1-t_2|<\sigma\implies \bigl|\eta_{t_1}^x(f)-\eta_{t_2}^x(f)\bigr|<\epsilon,\quad \forall x\in X_1\]}
\end{definition}
\begin{proposition}\label{prop-nu}
For $i=1,2$, let $X_i$ be {closed subsets} of a finite dimensional Euclidean space. {Assume $X_1$ and $X_2$ are both compact, and $(X_1,\mathfrak{B}(X_1),\mu_{X_1})$ is a probability space.} Let $\mathcal{I}\subseteq\R$ be a compact interval.
\begin{enumerate}
\item[\textnormal{(i)}] Let $\eta_{\cdot}\colon \mathcal{I}\to \mathcal{B}_{*}(X_1,\cM_+(X_2))$. Then $\eta_{\cdot}$ is {$x$-uniformly weakly $t$-continuous} if and only if $\eta_{\cdot}\in \mathcal{C}(\mathcal{I},\mathcal{B}_{*}(X_1,\cM_+(X_2)))$.
\item[\textnormal{(ii)}] Assume $\eta_{\cdot},\ \xi_{\cdot}\in \mathcal{C}(\mathcal{I},\mathcal{B}_{*}(X_1,\cM_+(X_2)))$, then
    $\|\eta_{\cdot}\|<\infty$ and $t\mapsto d_{\infty}(\eta_t,\xi_t)$ is continuous.
\item[\textnormal{(iii)}] Assume $\eta\in \mathcal{C}(X_1,\cM_+(X_2))$. Then $\eta$ is weakly $x$-continuous.
\item[\textnormal{(iv)}] Assume $\eta\in \mathcal{C}(X_1,\cM_+(X_2))$. Let $m\in\mathbb{N}$,
$$\eta(m) \colon X_1^m\ni(x_1,\ldots,x_{m})\mapsto\otimes_{j=1}^m\eta^{x_j}\in\cM_+(X_2^m).$$ Then $\eta(m)$ is weakly $x$-continuous.
\end{enumerate}
\end{proposition}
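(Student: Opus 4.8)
The plan is to prove all four assertions by repeatedly exploiting two elementary features of the bounded Lipschitz structure on the compact space $X_2$. The first is that for a positive measure $\nu\in\cM_+(X_2)$ one has $\|\nu\|_{\sf BL}=\nu(X_2)=\|\nu\|_{\sf TV}$: the constant function $f\equiv1$ lies in $\mathcal{BL}_1(X_2)$ since $\mathcal{BL}(f)=\|f\|_\infty+\mathcal{L}(f)=1$, so testing against it gives $\|\nu\|_{\sf BL}\ge\nu(X_2)$, while every $f\in\mathcal{BL}_1(X_2)$ satisfies $\int f\,\rd\nu\le\|f\|_\infty\,\nu(X_2)\le\nu(X_2)$. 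In particular $d_{\sf BL}(\nu^1,\nu^2)\ge|\nu^1(X_2)-\nu^2(X_2)|$. The second feature is that $\mathcal{BL}(X_2)$ is dense in $(\mathcal{C}(X_2),\|\cdot\|_\infty)$ (Lipschitz functions are dense in continuous functions on a compact metric space), while the ball $\mathcal{BL}_1(X_2)$ is totally bounded in $\mathcal{C}(X_2)$ by Arzel\`a--Ascoli, its members being uniformly bounded and equi-Lipschitz on the compact set $X_2$.

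For (iii), fix $f\in\mathcal{C}_{\sf b}(X_2)=\mathcal{C}(X_2)$ and $x_n\to x$ in $X_1$. Continuity of $\eta$ gives $d_{\sf BL}(\eta^{x_n},\eta^x)\to0$, and since $X_1$ is compact the image is $d_{\sf BL}$-bounded, so $C\coloneqq\sup_x\eta^x(X_2)<\infty$ by the first feature. Approximating $f$ by $g\in\mathcal{BL}(X_2)$ with $\|f-g\|_\infty<\varepsilon$ and splitting $\eta^{x_n}(f)-\eta^x(f)$ into three terms, the two tails are bounded by $\varepsilon C$ and the middle term $\eta^{x_n}(g)-\eta^x(g)$ is at most $\|g\|_{\sf BL}\,d_{\sf BL}(\eta^{x_n},\eta^x)\to0$; letting $\varepsilon\to0$ yields continuity of $x\mapsto\eta^x(f)$. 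For (iv) I would first treat product test functions $f=f_1\otimes\cdots\otimes f_m$, for which $\int f\,\rd(\otimes_j\eta^{x_j})=\prod_j\eta^{x_j}(f_j)$ converges as $(x_1,\dots,x_m)\to(x_1^0,\dots,x_m^0)$ by (iii) together with the uniform bound $C$. Since finite linear combinations of such product functions are dense in $\mathcal{C}(X_2^m)$ by Stone--Weierstrass and the tensor masses satisfy $\prod_j\eta^{x_j}(X_2)\le C^m$, the same three-term splitting extends weak continuity to all of $\mathcal{C}(X_2^m)$.

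Assertion (i) is the core. The implication $\eta_\cdot\in\mathcal{C}(\mathcal{I},\cdot)\Rightarrow$ uniform weak $t$-continuity follows from the density/mass-bound splitting as above, now uniformly in $x$ because $|\eta_t^x(g)-\eta_s^x(g)|\le\|g\|_{\sf BL}\,d_\infty(\eta_t,\eta_s)$. For the converse, applying the hypothesis to $f\equiv1$ shows $t\mapsto\sup_x\eta_t^x(X_2)$ is continuous, hence bounded by some $C<\infty$ on the compact interval $\mathcal{I}$. Given $\varepsilon>0$, pick a finite $\varepsilon$-net $\{f_1,\dots,f_K\}\subset\mathcal{BL}_1(X_2)$ in sup-norm and a single $\delta>0$ with $\sup_x|\eta_t^x(f_k)-\eta_s^x(f_k)|<\varepsilon$ for all $k$ whenever $|t-s|<\delta$. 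For arbitrary $f\in\mathcal{BL}_1(X_2)$ choose the nearest $f_k$ and bound $|\eta_t^x(f)-\eta_s^x(f)|\le(2C+1)\varepsilon$ uniformly in $x$ and in $f$; taking suprema gives $d_\infty(\eta_t,\eta_s)\le(2C+1)\varepsilon$, i.e. $d_\infty$-continuity. Finally, for (ii) the estimate $|\,\|\eta_t\|-\|\eta_s\|\,|\le d_\infty(\eta_t,\eta_s)$ (again via $f\equiv1$) makes $t\mapsto\|\eta_t\|$ continuous, hence bounded on $\mathcal{I}$, so $\|\eta_\cdot\|<\infty$; continuity of $t\mapsto d_\infty(\eta_t,\xi_t)$ is then immediate from $|d_\infty(\eta_t,\xi_t)-d_\infty(\eta_s,\xi_s)|\le d_\infty(\eta_t,\eta_s)+d_\infty(\xi_t,\xi_s)$.

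The main obstacle is the converse direction of (i): the hypothesis supplies uniform-in-$x$ continuity in $t$ only for each \emph{fixed} test function, whereas $d_\infty$ is a supremum over the entire ball $\mathcal{BL}_1(X_2)$, so one must interchange a supremum over $f$ with the modulus of continuity in $t$. The total boundedness of $\mathcal{BL}_1(X_2)$ from Arzel\`a--Ascoli is precisely what legitimizes this interchange with a single $\delta$, and securing the uniform mass bound $C$ (needed to absorb the net-approximation error uniformly in $x$) is the one point where compactness of both $X_2$ and $\mathcal{I}$ is indispensable.
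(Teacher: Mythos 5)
Your proof is correct, and on the decisive point it takes a genuinely different route from the paper's. The forward half of (i) ($d_{\infty}$-continuity implies uniform weak $t$-continuity) coincides with the paper's Step~II: approximate $f\in\mathcal{C}(X_2)$ by a bounded Lipschitz function (the paper cites \cite{CMN19} for this density) and absorb the error with the uniform mass bound $\|\eta_{\cdot}\|$. For the converse half, however, the paper argues pointwise in $x$: it invokes the metrization theorem of \cite{B07} (that $d_{\sf BL}$ metrizes weak-$*$ convergence on $\cM_+(X_2)$) to get $d_{\sf BL}((\eta_{t_j})_x,(\eta_t)_x)\to 0$ for each fixed $x$, and then simply asserts that this convergence is uniform in $x$ --- yet the interchange of the supremum over $\mathcal{BL}_1(X_2)$ with the limit in $t$, uniformly in $x$, is precisely what that citation by itself does not deliver. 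Your Arzel\`a--Ascoli argument supplies this missing step explicitly: total boundedness of $\mathcal{BL}_1(X_2)$ in $(\mathcal{C}(X_2),\|\cdot\|_{\infty})$ gives a finite $\varepsilon$-net, the hypothesis applied to the finitely many net functions yields a single $\delta$ valid for all $x$, and the uniform mass bound $C$ (obtained from $f\equiv 1$ together with compactness of $\mathcal{I}$) controls the net-approximation error uniformly. So your proof is more self-contained and in fact substantiates the one point where the paper is terse; the paper's route is shorter but delegates the uniformity in $x$ to the reader. The remaining items differ in provenance rather than substance: the paper imports (ii)--(iii) from \cite[Proposition~3.9]{KX21} and says (iv) is proved ``similarly to (iii)'', whereas you prove (ii) via the test function $f\equiv 1$ and the quadrilateral inequality, (iii) by the same Lipschitz-density splitting, and (iv) by Stone--Weierstrass applied to tensor-product test functions --- the latter being a clean, concrete realization of the paper's one-line claim.
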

\begin{proof}
Items (ii)-(iii) follow directly from \cite[{Proposition~2.9}]{KX21}. The last statement (iv) can be proved similarly to (iii). We only prove (i). 

{\begin{enumerate}
\item[Step I.] $x$-uniform weak $t$-continuity implies continuity. Assume $\eta_{\cdot}$ is $x$-uniformly weakly $t$-continuous. Fix $t\in\mathcal{I}$ and $\mathcal{I}\ni t_j\to t$. Since $\eta_{\cdot}$ is $x$-uniformly weakly $t$-continuous, for every $f\in \mathcal{C}_{\sf b}(X_2)$, $$(\eta_{t_j})_{x}(f)\to(\eta_t)_{x}(f)\quad \text{uniformly in}\ x.$$
Since $X_2$ is a complete, separable metric space,  $d_{\sf BL}$ metrizes the weak-$*$ topology of $\cM_+(X_2)$ \cite[Thm.~8.3.2]{B07}. Using the supremum representation of $d_{\sf BL}$ and note that $\mathcal{BL}_1(X_2)\subseteq\mathcal{C}_{\sf b}(X_2)$, we have
\[\lim_{j\to\infty}d_{\sf BL}((\eta_{t_j})_x,(\eta_t)_x)=0,\]
and the convergence is uniform in $x\in X_1$. This means that
\[\lim_{j\to\infty}\sup_{x\in X_1}d_{\sf BL}((\eta_{t_j})_x,(\eta_t)_x)=0,\]
i.e., $$\lim_{j\to\infty}d_{\infty}(\eta_{t_j},\eta_t)=0,$$
which shows $\eta_{\cdot}\in\mathcal{C}(\mathcal{I},\mathcal{B}_{\mu_{X_1},1}(X_1,\cM_+(X_2)))$.
\item[Step II.] Continuity implies $x$-uniform weak $t$-continuity. Assume $\eta_{\cdot}\in\mathcal{C}(\mathcal{I},\mathcal{B}_{\mu_{X_1},1}(X_1,\cM_+(X_2)))$. For every fixed $t\in\mathcal{I}$ and $\mathcal{I}\ni t_j\to t$, we have
\begin{equation}\label{Eq-infinity-convergence}
\lim_{j\to\infty}d_{\infty}(\eta_{t_j},\eta_t)=0.
\end{equation} 
Let $f\in\mathcal{C}_{\sf b}(X_2)$. Since $X_2$ is compact, by \cite[Corollary~6.2.2]{CMN19} for every $\varepsilon>0$, there exists $M\ge1$ and $\tilde{f}\in\mathcal{BL}(X_2)$ such that $\mathcal{BL}(\tilde{f})\le M$ and 
\begin{equation}\label{Eq-Lipschit-approximation}
\|f-\tilde{f}\|_{\infty}<\frac{\varepsilon}{3(1+\|\eta_{\cdot}\|)}
\end{equation}
It follows from \eqref{Eq-infinity-convergence} that there exists $J\in\mathbb{N}$ such that for all $j\ge J$,
\begin{equation}\label{Eq-uniform-d-infinity-convergence}
\sup_{g\in\mathcal{BL}_1(X_2)}|\eta^x_{t_j}(g)-\eta^x_t(g)|<\frac{\varepsilon}{3M}
\end{equation}
Note that $\frac{\tilde{f}}{M}\in\mathcal{BL}_1(X_2)$. Hence 
\[|\eta_{t_j}^x(\tilde{f})-\eta_t^x(\tilde{f})|=M|\eta_{t_j}^x(\tilde{f}/M)-\eta_t^x(\tilde{f}/M)|<M\cdot\frac{\varepsilon}{3M}=\frac{\varepsilon}{3},\quad \text{for all}\quad x\in X\] 
Using triangle inequality, \eqref{Eq-Lipschit-approximation} and \eqref{Eq-uniform-d-infinity-convergence},
\begin{align*}
|\eta^x_{t_j}(f)-\eta^x(f)|\le&|\eta^x_{t_j}(f)-\eta^x_{t_j}(\tilde{f})|+|\eta^x_{t_j}(\tilde{f})-\eta^x(\tilde{f})|+|\eta^x(\tilde{f})-\eta^x(f)|\\
\le&\|f-\tilde{f}\|_{\infty}\|\eta_{\cdot}\|+M|\eta^x_{t_j}(\tilde{f}/M)-\eta^x(\tilde{f}/M)|+\|f-\tilde{f}\|_{\infty}\|\eta_{\cdot}\|\\
\le&\frac{\varepsilon}{3(1+\|\eta_{\cdot}\|)}+M\frac{\varepsilon}{3M}+\frac{\varepsilon}{3(1+\|\eta_{\cdot}\|)}<\varepsilon
\end{align*}
This shows that $\eta_{\cdot}$ is $x$-uniformly weakly $t$-continuous.
\end{enumerate}
}\end{proof}

\begin{definition}
For $i=1,2$, let $X_i$ be {closed subsets} of a finite dimensional Euclidean space. Assume that $(X_1,\mathfrak{B}(X_1),\mu_{X_1})$ is a compact probability space.  Let $\mathcal{I}\subseteq\R$ be a compact interval and $\alpha>0$.  For $\nu^1_{\cdot},\nu^2_{\cdot}\in \mathcal{C}(\mathcal{I},\mathcal{B}_{*}(X_1,\cM_+(X_2)))$, let
  \[d_{\alpha}(\nu^1_{\cdot},\nu^2_{\cdot})=\sup_{t\in\mathcal{I}}\textnormal{e}^{-\alpha t}d_{\infty}(\nu^1_t,\nu^2_t)\]be a \emph{weighted} uniform metric. In particular, $$d_0(\nu^1_{\cdot},\nu^2_{\cdot})=\sup_{t\in\mathcal{I}}d_{\infty}(\nu^1_t,\nu^2_t).$$
\end{definition}

\begin{proposition}\cite[{Proposition~2.11}]{KX21}
For $i=1,2$, let $X_i$ be closed subsets of a finite-dimensional Euclidean space. Assume that $(X_1,\mathfrak{B}(X_1),\mu_{X_1})$ is a compact probability space.  Let $\mathcal{I}\subseteq\R$ be a compact interval and $\alpha>0$. Then $(\mathcal{C}(\mathcal{I},\mathcal{B}_{*}(X_1,\cM_+(X_2))),d_{\alpha})$ and $(\mathcal{C}(\mathcal{I},\mathcal{C}_{*}(X_1,$ $\cM_+(X_2))),d_{\alpha})$ are both complete.
\end{proposition}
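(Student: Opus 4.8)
The plan is to reduce the weighted metric $d_{\alpha}$ to the plain uniform metric $d_0$, then to recognize both spaces as spaces of continuous functions valued in a complete metric space, for which completeness is classical. First, since $\mathcal{I}=[a,b]$ is compact, the weight satisfies $\textnormal{e}^{-\alpha b}\le \textnormal{e}^{-\alpha t}\le \textnormal{e}^{-\alpha a}$ for all $t\in\mathcal{I}$, whence $\textnormal{e}^{-\alpha b}d_0\le d_{\alpha}\le \textnormal{e}^{-\alpha a}d_0$. Thus $d_{\alpha}$ and $d_0$ are Lipschitz-equivalent and have the same Cauchy sequences, so it suffices to prove completeness for $d_0(\nu^1_{\cdot},\nu^2_{\cdot})=\sup_{t\in\mathcal{I}}d_{\infty}(\nu^1_t,\nu^2_t)$. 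Now $d_0$ is exactly the sup-metric on $\mathcal{C}(\mathcal{I},Y)$, where $(Y,d_{\infty})$ is either $\mathcal{B}_{*}(X_1,\cM_+(X_2))$ or $\mathcal{C}_{*}(X_1,\cM_+(X_2))$. For any complete metric space $(Y,d_{\infty})$ the space $\mathcal{C}(\mathcal{I},Y)$ with the sup-metric is complete: given a $d_0$-Cauchy sequence $(\nu^n_{\cdot})$, for each fixed $t$ the sequence $(\nu^n_t)$ is $d_{\infty}$-Cauchy and hence converges to some $\nu_t\in Y$, and a routine $\varepsilon/3$ argument shows that $\nu^n_{\cdot}\to\nu_{\cdot}$ uniformly and that the limit $\nu_{\cdot}$ is continuous. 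Hence the entire problem reduces to verifying that $\mathcal{B}_{*}(X_1,\cM_+(X_2))$ and $\mathcal{C}_{*}(X_1,\cM_+(X_2))$ are complete under $d_{\infty}$.

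For this I would invoke the completeness of $(\mathcal{B}(X_1,\cM_+(X_2)),d_{\infty})$ and of $(\mathcal{C}_{\sf b}(X_1,\cM_+(X_2)),d_{\infty})$ already recorded above (themselves consequences of the completeness of $(\cM_+(X_2),d_{\sf BL})$), and realize $\mathcal{B}_{*}$ and $\mathcal{C}_{*}$ as \emph{closed} subsets of these. Since $X_1$ is compact, every continuous map into $\cM_+(X_2)$ has compact, hence bounded, image, so $\mathcal{C}(X_1,\cM_+(X_2))=\mathcal{C}_{\sf b}(X_1,\cM_+(X_2))$ and $\mathcal{C}_{*}=\mathcal{B}_{*}\cap\mathcal{C}_{\sf b}(X_1,\cM_+(X_2))$. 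The crucial observation is that the normalization functional $\Lambda(\eta)\coloneqq\int_{X_1}\eta^x(X_2)\,\mathrm{d}\mu_{X_1}(x)$ is $d_{\infty}$-continuous: the constant function $\mathbbm{1}_{X_2}$ lies in $\mathcal{BL}_1(X_2)$ (its bounded-Lipschitz norm equals $1$), so $|\eta^x(X_2)-\xi^x(X_2)|=|\eta^x(\mathbbm{1}_{X_2})-\xi^x(\mathbbm{1}_{X_2})|\le d_{\sf BL}(\eta^x,\xi^x)\le d_{\infty}(\eta,\xi)$, and since $\mu_{X_1}$ is a probability measure, integrating gives $|\Lambda(\eta)-\Lambda(\xi)|\le d_{\infty}(\eta,\xi)$. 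Thus $\Lambda$ is $1$-Lipschitz, $\mathcal{B}_{*}=\Lambda^{-1}(\{1\})$ is closed in the complete space $\mathcal{B}(X_1,\cM_+(X_2))$ and hence complete, and likewise $\mathcal{C}_{*}=\mathcal{C}_{\sf b}(X_1,\cM_+(X_2))\cap\Lambda^{-1}(\{1\})$ is closed in the complete space $\mathcal{C}_{\sf b}(X_1,\cM_+(X_2))$ and hence complete.

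The only genuinely delicate point is the $d_{\infty}$-continuity of the total-mass constraint, i.e.\ confirming that the constant test function is admissible ($\mathbbm{1}_{X_2}\in\mathcal{BL}_1(X_2)$) and that the uniform control $\sup_x|\eta^x(X_2)-\xi^x(X_2)|\le d_{\infty}(\eta,\xi)$ survives integration against the finite measure $\mu_{X_1}$; everything else is bookkeeping on top of the cited completeness statements. I would therefore organize the write-up in the order above: (i) the equivalence $d_{\alpha}\thicksim d_0$; (ii) the abstract completeness of $\mathcal{C}(\mathcal{I},Y)$ for complete $Y$; and (iii) the closedness of the normalization constraint, yielding completeness of $\mathcal{B}_{*}$ and $\mathcal{C}_{*}$ and thereby of both spaces in the statement.
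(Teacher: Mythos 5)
Your proof is correct. Note, however, that this paper does not actually prove the proposition: it is imported verbatim from \cite[Proposition~3.11]{KX21}, so there is no in-paper argument to compare yours against. On its own merits, your argument is sound and essentially the canonical one: (i) on a compact interval $\mathcal{I}=[a,b]$ the bounds $\textnormal{e}^{-\alpha b}d_0\le d_\alpha\le \textnormal{e}^{-\alpha a}d_0$ make $d_\alpha$ and $d_0$ Lipschitz-equivalent; (ii) $\mathcal{C}(\mathcal{I},Y)$ with the sup metric is complete whenever $(Y,d_\infty)$ is; and (iii) the target spaces are complete because they are closed subsets of $\mathcal{B}(X_1,\cM_+(X_2))$ and $\mathcal{C}_{\sf b}(X_1,\cM_+(X_2))$, whose completeness the paper records right after introducing $d_\infty$. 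Your key step — that $\mathbbm{1}_{X_2}\in\mathcal{BL}_1(X_2)$ (together with $-\mathbbm{1}_{X_2}$) gives $|\eta^x(X_2)-\xi^x(X_2)|\le d_{\sf BL}(\eta^x,\xi^x)$, so the normalization functional $\Lambda(\eta)=\int_{X_1}\eta^x(X_2)\,\rd\mu_{X_1}(x)$ is $1$-Lipschitz for $d_\infty$ and $\Lambda^{-1}(\{1\})$ is closed — is exactly the right observation, and your remark that compactness of $X_1$ yields $\mathcal{C}(X_1,\cM_+(X_2))=\mathcal{C}_{\sf b}(X_1,\cM_+(X_2))\subseteq\mathcal{B}(X_1,\cM_+(X_2))$ correctly handles the $\mathcal{C}_{*}$ case. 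I see no gap.
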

\subsection{Hyper-digraph measures}

\begin{definition}
  \label{hyperdigraph}
 {Let $J\subsetneq\mathbb{N}\setminus\{1\}$ be a finite sequence and an integer $N\ge\max J$.   A \emph{directed hypergraph} $\mathcal{H}$ is a pair $\mathcal{H}=(\mathcal{V},\mathcal{E})$, where $\mathcal{V}$ is the set of $N$ vertices, and $\mathcal{E}\subseteq \cup_{\ell\in J}\mathcal{V}_{\ell}$ with $\mathcal{V}_{\ell}=\{(v_1,\ldots,v_{\ell})\colon v_i\in\mathcal{V},\  i\in[\ell]\}$,  consists of directed hyper-edges (or edges for short, so long as no confusion will arise in the context) each of  which is identified as a vector with distinct entries. 
Let $\ell\in J$. For every directed hyper-edge $e=(v_{i_1},v_{i_2},\ldots,v_{i_{\ell}})\in\mathcal{E}$, $v_{i_1}$ and $v_{i_{\ell}}$ are  the \emph{head} and the \emph{tail} of the edge $e$, respectively, and $\ell$ the \emph{cardinality} of $e$. The maximal cardinality $k$ of all edges is called the \emph{rank} of $\mathcal{H}$. To exclude any unused elements in $J$, we assume w.l.o.g. that  $J$ is the set of all possible cardinalities of edges and hence $k=\max J$.   If every directed hyper-edge $e\in \mathcal{E}$ is assigned a positive weight $a_e$, then the  directed hypergraph $\mathcal{H}$ is called a \emph{weighted directed hypergraph}.\footnote{Throughout we will omit ``weighted'' since we will always refer to weighted hypergraphs.}  
A hypergraph simply refers to a directed hypergraph if no confusion arises from the context. A directed hypergraph $\mathcal{H}'=(\mathcal{V}',\mathcal{E}')$ is a \emph{sub-hypergraph}  of $\mathcal{H}$ if $\mathcal{V}'\subseteq\mathcal{V}$ and $\mathcal{E}'\subseteq\mathcal{E}$. In particular, if $J=\{k\}$ is a singleton, i.e., all directed hyper-edges share the same cardinality, then $\mathcal{H}$ is a \emph{$k$-uniform directed hypergraph}.}  

{The union of all directed hyperedges of cardinality $k$ of $\mathcal{H}$ together with their vertices is the \emph{maximal} $k$-uniform directed sub-hypergraph of $\mathcal{H}$.  Any directed hypergraph $\mathcal{H}$ of a finite rank $k$ can be decomposed into a union of finitely many maximal $\ell$-uniform directed sub-hypergraphs $\mathcal{H}_{\ell}$ for $\ell\in[k]\setminus\{1\}$;  in this case, each maximal $\ell$-uniform sub-hypergraph $\mathcal{H}_{\ell}$ is called a \emph{layer} of $\mathcal{H}$, and $\mathcal{H}$ is a \emph{multi-layer directed hypergraph} if it is not a uniform directed hypergraph.}

{For a directed hypergraph $\mathcal{H}$, the \emph{density} of $\mathcal{H}$ is defined as follows:
\begin{equation}\label{degree-directed-hypergraph}
D(\mathcal{H})=\frac{\#\mathcal{E}}{\sum_{\ell\in J}\ell!\small\begin{pmatrix}\#\mathcal{V}\\ \ell\end{pmatrix}}
\end{equation} 
  A $k$-uniform directed hypergraph of $N$ vertices consisting of all possible hyper-edges of cardinality $k$ is called a \emph{complete $k$-uniform hypergraph}, and denoted $\mathcal{K}_N^k$. A   
 hypergraph is \emph{complete} if each layer is a  complete $\ell$-uniform hypergraph for all $\ell\in J$. A $k$-uniform directed hypergraph $(\mathcal{V},\mathcal{E})$ is \emph{$[k]$-permutation invariant} if $\mathcal{E}=\{((j_{\sigma(1)},j_{\sigma(2)},\ldots,j_{\sigma(k)}))\colon (j_1,j_2,\ldots,j_k)\in\mathcal{E}\}$ for any permutation $\sigma\colon [k]\to[k]$.} 

{A directed hypergraph is \emph{permutation invariant} if all of its layers are permutaton invariant. A permutation invariant directed hypergraph is an \emph{undirected hypergraph}, or simply a hypergraph; in this case, all directed hyper-edges constituting of the same set of vertices are identified  with their set of vertices and counted \emph{without multiplicity}. The density of an undirected hypergraph $\mathcal{H}$ is defined as \begin{equation}\label{degree-undirected-hypergraph}
D(\mathcal{H})=\frac{\#\mathcal{E}}{\sum_{\ell\in J}\small\begin{pmatrix}\#\mathcal{V}\\ \ell\end{pmatrix}}
\end{equation} 
The density of a sequence of $k$-uniform directed hypergraphs $\{\mathcal{H}_j=(\mathcal{V}_j,\mathcal{E}_j)\}_{j\in\mathbb{N}}$ is characterized by the asymptotics of the sequence of densities of  the hypergraphs: The sequence $\{\mathcal{H}_j\}_{j\in\mathbb{N}}$  is \emph{dense} if $D(\mathcal{H}_j)\thicksim1$ for large $j$ while is \emph{sparse} if $\#\mathcal{E}_j\precsim\#\mathcal{V}_j$ for large $j$;  otherwise, the sequence is neither dense nor sparse.} 
\end{definition}
\begin{remark}\label{re-1}
\begin{enumerate}
\item[$\bullet$] The concept of layer of a hypergraph of a finite rank is motivated from \cite{CKK22}.  
\item[$\bullet$]
Hypergraph defined in the literature (e.g.,\cite{ES12}) is \emph{undirected} in Definition~\ref{hyperdigraph}. 
\item[$\bullet$] {Indeed, every directed hypergraph $\mathcal{H}$ of $N$ vertices and rank $k$ can be \emph{associated with} $\mathcal{K}_N^2$, the complete directed graph, where the set $\mathcal{E}$ of directed hyper-edges can be identified with a \emph{subset} of directed paths of $\mathcal{K}_N^2$, and the cardinality of each directed hyper-edge equals the length of the path. Nevertheless, it is worth pointing out that a directed hypergraph cannot be \emph{identified with} a directed subgraph of $\mathcal{K}_N^2$, since not all subsequences of a path are a directed hyper-edge.}
\end{enumerate}
\end{remark}
\begin{example}
{Consider the hypergraph given in Figure~\ref{figure-hyper-graph}. It is a 3-layer directed hypergraph $\mathcal{H}=(\mathcal{E},\mathcal{V})$ consisting of in total of 3 uniform directed hypergraphs. We can assign a unit weight to each hyper-edge to make $\mathcal{H}$ a weighted directed hypergraph. Note that despite $(1,4,3,2)$ is a directed hyperedge of cardinality 4, none of any proper subsequence (e.g., $1\to4$) of the path $1\to4\to3\to2$ corresponds to another directed hyperedge of $\mathcal{H}$ with a lower cardinality. Hence the hypergraph cannot be identified with a  directed subgraph of $\cK_4^2$.}
\end{example}

Now we propose one candidate as a generalization of directed  hypergraph.
\begin{definition}\label{hyper-DGM}
{For $k\in\N\setminus\{1\}$, a measure-valued function in $\mathcal{B}(X,\cM_+(X^{k-1}))$ is called a \emph{$k$-uniform directed hypergraph measure} (abbreviated as ``\emph{$k$-uniform DHGM}''). A $k$-uniform DHGM $\eta\in \mathcal{B}(X,\cM_+(X^{k-1}))$ is a $k$-HGM if $\xi\coloneqq\mu_X\otimes\eta^x$ as a measure in $\cM_+(X^{k})$\footnote{$\xi$ is understood as: $$\xi(A)\coloneqq\int_{\pi_1A}\eta^x(A_x)\mathrm{d}\mu_X(x),\quad \text{for all measurable set}\quad A\subseteq X^k$$}  is \emph{$[k]$-permutation invariant} for any permutation  $\sigma\colon[k]\to[k]$: $$\rd\xi(x_1,\ldots,x_k)=\rd\xi(\sigma(x_1),\ldots,\sigma(x_k)),\quad (x_1,\ldots,x_k)\in X^k.$$ A  measure-valued function $\eta$ is called a \emph{directed hypergraph measure} (DHGM) of rank $k$ if it is a finite sum of $\ell$-uniform DHGM $\eta_{\ell}$ for $\ell\in J\subseteq[k]$ such that $\eta=\sum_{\ell\in J}\eta_{\ell}$ with $k=\max J$.  In this case, $\eta$ is a \emph{multi-layer DHGM}, and each $\ell$-uniform directed hypergraph measure $\eta_{\ell}$ is called a \emph{layer} of $\eta$.}
\end{definition}
\begin{definition}\label{hypergraphon}
{Let $\mu_X\in\cP(X)$ be a reference measure. A $k$-uniform DHGM $\eta\in \mathcal{B}(X,\cM_+(X^{k-1}))$ is called a \emph{$k$-uniform directed hyper-graphon} w.r.t. $\otimes_{j=1}^k\mu_X$ if $\mu_X\otimes\eta^x\in\cM_{+}(X^{{k}})$ is absolutely continuous w.r.t. $\otimes_{j=1}^k\mu_X$. A $[k]$-permutation invariant directed hyper-graphon is a \emph{$k$-uniform hyper-graphon}. A DHGM is called a \emph{directed hyper-graphon} of rank $k$ if it is a finite sum of $\ell$-uniform directed hyper-graphons $\eta_{\ell}\in \mathcal{B}(X,\cM_+(X^{\ell-1}))$ for $\ell\in J\subseteq[k]$. A DHGM is called a \emph{hyper-graphon} of rank $k$ if it is a finite sum of $\ell$-uniform hyper-graphons.}
\end{definition}

Based on the density characterization of a sequence of $k$-uniform hypergraphs in Remark~\ref{re-1}, we introduce dense and sparse  $k$-uniform DHGM. 
\begin{definition}\label{Def-dense-sparse}
  A $k$-uniform DHGM $\eta$ is \emph{dense} if $$\text{essinf}_{x\in X}\text{dim}_H(\supp\ \eta^x)=\text{dim}_H(X)>0,$$ where the essential infimum is w.r.t. the reference measure $\mu_X$. Hence a hypergraphon is dense. A $k$-uniform DHGM $\eta$ is \emph{sparse} if $$\sup_{x\in X}\#\supp\eta^x<\infty.$$ {A DHGM is \emph{dense} if it is a sum of finitely many dense uniform DHGMs; a DHGM is \emph{dense} if it is a sum of finitely many sparse uniform DHGMs. A sparse DHGM is called a \emph{directed hypergraphing}.} 
\end{definition}

\begin{remark}
{A directed DHGM of a finite rank $k$ may be used to characterize the limit of a sequence of multi-layer hypergraphs $\{\mathcal{H}_j\}_{j\in\mathbb{N}}$ of rank $k$, where each sequence of $\ell$-uniform sub hypergraphs $\{\mathcal{H}_{j,\ell}\}_{j\in\mathbb{N}}$ has a limit, and $\mathcal{H}_{j,\ell}$ is a layer of $\mathcal{H}_{j}$, for every $\ell\in J$ for some $J$ with $\max J=k$, $j\in\mathbb{N}$. One can always embed a DHGM $\eta$ of rank $k$ into $\mathcal{B}(X,\cM_+(X^{m-1}))$ for all integers $m\ge k$. Nevertheless, the layers of $\eta$ as $\ell$-uniform DHGMs $\eta_{\ell}$ for $\ell<m$ may become \emph{singular} in the sense that their \emph{fibers} $\eta_{\ell}^x$ are singular w.r.t. $\otimes_{j=1}^m\mu_X$ when regarded as in $\cM_+(X^{m-1})$.}
\end{remark}

\begin{remark}
{In contrast to a directed multi-layer hypergraph of a finite rank, a DHGM of an unbounded rank may not always be decomposed into finitely many $k$-uniform DHGMs.  
Instead it may be decomposed into countably infinite uniform DHGMs of \emph{unbounded} ranks (e.g., simplicial complexes of unbounded ranks \cite{RS23}). In this case, this directed hypergraph limit may be represented as a series $\sum_{\ell\in\mathbb{N}}\eta_{\ell}$, where each $\eta_{\ell}$ is an $\ell$-uniform DHGM.  
Such a DHGM could be viewed as an elements in $\mathcal{B}(X,\cM_+(\ell_1(X)))$, where $\ell_1(X)$ consists of sequences $\{x_n\}_{n\in\mathbb{N}}$ in $X$ with $\sum_{i=1}^{\infty}d(x_i,0)<\infty$, where $X$ is a subset of $\mathbb{R}^m$ and $0\in\mathbb{R}^m$ is the origin, and the metric $d$ can be chosen to be either a geodesic metric for e.g., $X=\mathbb{S}^{d-1}$ or just the metric induced by the Euclidean norm. One might also replace $\ell_1(X)$ by a space of functions defined on $X$, e.g., the space of all continuous real-valued functions on $X$.} 
\end{remark}
\subsection*{Representation of finite hypergraphs}
There are many ways of representing a finite (hyper)graph: by extended real-valued functions or by measurable sets (\emph{graphons} \cite{LS06}, \emph{extended graphons} \cite{JPS21}, or \emph{hypergrahons} in the sense of \cite{ES12}), by positive linear operators (\emph{graphop} \cite{BS20}), or by measure-valued functions (so-called \emph{digraph measures} \cite{KX21}). We now provide two ways of representing a {finite} hypergraph. For ease of exposition, we assume $X=[0,1]$ with $\mu_X$ being the Lebesgue measure on $X$, and we only represent a $k$-uniform hypergraph. For a general hypergraph $\cH$ of rank $k$, one can always first represent each layer $\cH_{\ell}$ of the hypergraph and then take the sum as the representation of $\cH$.

Let $\mathcal{H}^N=([N],\mathcal{E}^N)$ be a weighted $k$-uniform directed  hypergraph. Let $\{I_i^N\}$ be an equipartition of $X$ with 
\begin{equation}\label{Eq-equipartition}
I_{i}^N=\bigl[\tfrac{i-1}{N},\tfrac{i}{N}\bigr[,\quad 1\le i\le N-1,\quad \text{and}\quad I_N^N=\bigl[\tfrac{N-1}{N},1\bigr]
\end{equation} Then $\{I_{i_1,\ldots,i_k}^N\}$ is a uniform partition of $X^{k}$ with cubes $I_{i_1,\ldots,i_k}^N\coloneqq\otimes_{j=1}^kI^N_{i_j}$ for $\{i_1,\ldots,i_k\}\subseteq[N]^k$. We  associate with $\mathcal{E}^N$ an  $N^k$-dimensional non-negative adjacency matrix $(a^N_{i_1,\ldots,i_k})$ such that $a^N_{i_1,\ldots,i_k}>0$ if and only if $(i_1,\ldots,i_k)\in\mathcal{E}^N$. 

\begin{enumerate}
\item[(i)] \emph{Function representation}. Define a simple function $W^N\in \mathsf{L}^p(X^k)$:
\begin{equation}
W^N=\sum_{i_1=1}^N\cdots\sum_{i_{k}=1}^Na^N_{i_1,\ldots,i_{k}}\mathbbm{1}_{I_{i_1,\ldots,i_{k}}^N}
\end{equation}
Define a \emph{piecewise uniform measure valued function} $\eta_N\in\cB(X,\cM_+(X^{k-1}))$ by the Radon-Nikodym derivative:
\[\frac{\rd\eta_N^x(y_1,\ldots,y_{k-1})}{\rd(\otimes_{\ell=1}^{k-1}\mu_X)(y_1,\ldots,y_{k-1})}=W^N(x,y_1,\ldots,y_{k-1}),\quad x\in X\]where $\otimes_{\ell=1}^{k-1}\mu_X$ is the product measure on $X^{k-1}$ satisfying 
\[(\otimes_{\ell=1}^{k-1}\mu_X)(\prod_{\ell=1}^{k-1}A_{\ell})=\otimes_{\ell=1}^{k-1}\mu_X(A_{\ell}),\]
and $A_{\ell}\subseteq X$ for $\ell=1,\ldots,k-1$ are Borel measurable sets. {Note that the absolutely continuous measure $\eta_N^x$ has a piecewise constant density $W^N(x,\cdot)$ owing to the finiteness of the hypergraph $\mathcal{H}^N$, and hence as a measure, $\eta_N^x$ is ``piecewise uniform''.} 
\item[(ii)] \emph{Atomic {measure} representation}. Define $\tilde{\eta}_N\in\cB(X,\cM_+(X^{k-1}))$:
\[\tilde{\eta}^x_N=N^{-(k-1)}\sum_{i_2=1}^N\cdots\sum_{i_k=1}^Na^N_{i_1,\ldots,i_{k}}\delta_{\left(\frac{2i_2-1}{2N},\ldots,\frac{2i_{k}-1}{2N}\right)},\quad x\in I^N_{i_1}\]
\end{enumerate}
Indeed, assume that:\\
\smallskip

\noindent (\textbf{H}) $\quad \max_{1\le i\le N}\sum_{j_1=1}^{N}\cdots\sum_{j_{k-1}=1}^Na^N_{i,j_1,\ldots,j_{k-1}}=o(N^k)$\\

Then these two representations are \emph{asymptotically the same}.

\begin{proposition}\label{prop-no-difference-representation}
Assume (\textbf{H}). Then $\lim_{N\to\infty}d_{\infty}(\eta_N,\tilde{\eta}_N)=0$.
\end{proposition}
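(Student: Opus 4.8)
The plan is to reduce $d_{\infty}$ to a finite maximum over blocks and then estimate each $d_{\sf BL}$ by exploiting that the two representations place \emph{equal} mass on each sub-cube of $X^{k-1}$, differing only by a spatial displacement of size $O(1/N)$. The sole hypothesis (\textbf{H}) will then enter only at the very end to control the total mass.

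First I would note that for $x\in I_{i_1}^N$ both $\eta_N^x$ and $\tilde{\eta}_N^x$ depend on $x$ only through the index $i_1$, so that $d_{\infty}(\eta_N,\tilde{\eta}_N)=\max_{1\le i_1\le N}d_{\sf BL}(\eta_N^x,\tilde{\eta}_N^x)$ and it suffices to bound each summand uniformly in $i_1$. Fixing such an $x$ and writing $B=I_{i_2,\ldots,i_k}^N$ for the cubes partitioning $X^{k-1}$, with centers $c_B=\bigl(\tfrac{2i_2-1}{2N},\ldots,\tfrac{2i_k-1}{2N}\bigr)$ and common volume $(\otimes_{\ell=1}^{k-1}\mu_X)(B)=N^{-(k-1)}$, both measures assign to $B$ the same mass $m_B\coloneqq a^N_{i_1,\ldots,i_k}N^{-(k-1)}$: the restriction $\eta_N^x|_B$ is $m_B$ times the uniform probability measure on $B$, while $\tilde{\eta}_N^x|_B=m_B\,\delta_{c_B}$.

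Next, for any $f\in\mathcal{BL}_1(X^{k-1})$ I would use only the Lipschitz bound $\mathcal{L}(f)\le1$. Since the $\ell_1$-distance from any $y\in B$ to the midpoint $c_B$ is at most $(k-1)/(2N)$, one gets $\bigl|\int_B f\,\rd(\eta_N^x-\tilde{\eta}_N^x)\bigr|=m_B\bigl|N^{k-1}\int_B f\,\rd(\otimes_{\ell=1}^{k-1}\mu_X)-f(c_B)\bigr|\le m_B\,\tfrac{k-1}{2N}$. Summing over the cubes (the equality of masses on each $B$ is exactly what annihilates the $\|f\|_{\infty}$-contribution) yields $\bigl|\int f\,\rd(\eta_N^x-\tilde{\eta}_N^x)\bigr|\le\frac{k-1}{2N}\|\eta_N^x\|_{\TV}$, and taking the supremum over $f\in\mathcal{BL}_1(X^{k-1})$ gives $d_{\sf BL}(\eta_N^x,\tilde{\eta}_N^x)\le\frac{k-1}{2N}\|\eta_N^x\|_{\TV}$.

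Finally I would insert the total mass $\|\eta_N^x\|_{\TV}=N^{-(k-1)}\sum_{i_2,\ldots,i_k}a^N_{i_1,\ldots,i_k}$ and invoke (\textbf{H}): the row sum equals $o(N^k)$ uniformly in $i_1$, so $\|\eta_N^x\|_{\TV}=o(N)$ uniformly, whence $d_{\sf BL}(\eta_N^x,\tilde{\eta}_N^x)\le\frac{k-1}{2N}\,o(N)=o(1)$ uniformly in $i_1$; taking the maximum over $i_1$ gives $d_{\infty}(\eta_N,\tilde{\eta}_N)\to0$. The computation is otherwise routine; the only point requiring care is the bookkeeping of the powers of $N$\---the $O(1/N)$ gain from the cube diameter must exactly absorb the $o(N)$ growth of the (un-normalized) total mass permitted by (\textbf{H})\---together with verifying that every estimate is uniform in the block index $i_1$, so that the passage to $d_{\infty}$ is legitimate.
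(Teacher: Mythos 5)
Your proof is correct and takes essentially the same route as the paper's: reduce $d_{\infty}$ to a maximum over the blocks $I^N_{i_1}$, decompose $X^{k-1}$ into the sub-cubes, use the fact that the two representations place equal mass on each sub-cube (so only the Lipschitz part of $f$ enters), and absorb the resulting $O(1/N)$ gain from the cube diameter against the $o(N)$ total-mass bound supplied by (\textbf{H}). The only, immaterial, difference is that you bound the $\ell_1$-distance to the cube center by its supremum $\tfrac{k-1}{2N}$, whereas the paper integrates it exactly over the cube, which yields the constant $\tfrac{k-1}{4}$ in place of your $\tfrac{k-1}{2}$.
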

\begin{proof}\begin{align*}
d_{\infty}(\eta_N,\tilde{\eta}_N)=&\sup_{x\in X}d_{\sf BL}(\eta^x_N,\tilde{\eta}^x_N)\\
=&\max_{1\le i\le N}\sup_{x\in I^N_i}\sup_{f\in\BL_1(X^{k-1})}\int_{X^{k-1}}f(y_1,\ldots,y_{k-1})\rd(\eta^x_N-\tilde{\eta}^x_N)\\
=&\max_{1\le i\le N}\sup_{x\in I^N_i}\sup_{f\in\BL_1(X^{k-1})}\sum_{j_1=1}^{N}\cdots\sum_{j_{k-1}=1}^N\int_{\prod_{\ell=1}^{k-1}I^N_{j_{\ell}}}f(y_1,\ldots,y_{k-1})\rd(\eta^x_N-\tilde{\eta}^x_N)\\
=&\max_{1\le i\le N}\sup_{x\in I^N_i}\sup_{f\in\BL_1(X^{k-1})}\sum_{j_1=1}^{N}\cdots\sum_{j_{k-1}=1}^Na^N_{i,j_1,\ldots,j_{k-1}}\\
&\cdot\int_{\prod_{\ell=1}^{k-1}I^N_{j_{\ell}}}\bigl(f(y_1,\ldots,y_{k-1})-f(\tfrac{2i_2-1}{2N},\ldots,\tfrac{2i_{k}-1}{2N})\bigr)\rd(\otimes_{\ell=1}^{k-1}\mu_X)(y_1,\ldots,y_{k-1})\\
\le&\max_{1\le i\le N}\sum_{j_1=1}^{N}\cdots\sum_{j_{k-1}=1}^Na^N_{i,j_1,\ldots,j_{k-1}}\\
&\cdot\int_{\prod_{\ell=1}^{k-1}I^N_{j_{\ell}}}|(y_1,\ldots,y_{k-1})-(\tfrac{2i_2-1}{2N},\ldots,\tfrac{2i_{k}-1}{2N})|\rd(\otimes_{\ell=1}^{k-1}\mu_X)(y_1,\ldots,y_{k-1})\\
\le&\max_{1\le i\le N}\sum_{j_1=1}^{N}\cdots\sum_{j_{k-1}=1}^Na^N_{i,j_1,\ldots,j_{k-1}}\\
&\cdot\int_{\prod_{\ell=1}^{k-1}I^N_{j_{\ell}}}\bigl(|y_1-\tfrac{2i_2-1}{2N}|+\ldots+|y_{k-1}-\tfrac{2i_{k}-1}{2N}|\bigr)\rd(\otimes_{\ell=1}^{k-1}\mu_X)(y_1,\ldots,y_{k-1})\\
\le&\frac{k-1}{4}\max_{1\le i\le N}N^{-k}\sum_{j_1=1}^{N}\cdots\sum_{j_{k-1}=1}^Na^N_{i,j_1,\ldots,j_{k-1}}\to0,\quad \text{as}\quad N\to\infty
\end{align*}
\end{proof}
\begin{remark}
\begin{enumerate}
\item[(i)] Proposition~\ref{prop-no-difference-representation} implies that with weights of appropriate scales, the two sequences of  measure-valued functions representing the same sequence of finite graphs, diverge simultaneously, or converge to the same limit. In other words, whether the sequence of hypergraphs viewed as DHGMs converges or not is \emph{independent} of the choice of the representation under the assumption (\textbf{H}).
\item[(ii)]  Note that $$\|\eta_N\|=\|\tilde{\eta}_N\|=\max_{1\le i\le N}N^{-k+1}\sum_{j_1=1}^{N}\cdots\sum_{j_{k-1}=1}^Na^N_{i,j_1,\ldots,j_{k-1}}$$ and (\textbf{H}) is equivalent to $\|\tilde{\eta}_N\|=o(N)$, which is a necessary condition for $\{\eta_N\}_{N\in\mathbb{N}}$ as well as $\{\tilde{\eta}_N\}_{N\in\mathbb{N}}$  to be convergent. Moreover, if $\{\eta_N\}_{N\in\mathbb{N}}$ and $\{\tilde{\eta}_N\}_{N\in\mathbb{N}}$ converge, then from the proof we know that $d_{\infty}(\eta_N,\tilde{\eta}_N)\to0$ as $N\to\infty$ with a rate no slower than $N^{-1}$.
\item[(iii)] This proposition also rules out the illusion that the atomic measure representation of finite (hyper)graphs implies the sequence of the (hyper)graphs are sparse. See Example~\ref{ex-complete-k-uniform-hypergraphon} below.
\item[(iv)] As will be seen below, $X$ is not necessarily $[0,1]$. Nevertheless, one can always represent a finite hypergraph in two ways analogously defined above.
\end{enumerate}
\end{remark}

\begin{example}\label{ex-complete-k-uniform-hypergraphon}
  Let $k\in\mathbb{N}\setminus\{1\}$ and $\{\mathcal{K}_N^k\}_{N\ge k}$ be the sequence of complete $k$-uniform hypergraphs \cite{L12}. Then $\{\mathcal{K}_N^k\}_{N\ge k}$, represented as DHGMs (in either of the two ways aforementioned), converges to the \emph{complete $k$-uniform hypergraphon} $\eta\in\cB(X,\cM_+(X^{k-1}))$: $$\eta^x\equiv\lambda|_{X^{k-1}}\quad \text{for all}\quad x\in X.$$
\end{example}
{\begin{example}\label{non-unique-representation}
Consider the graph of $N$ vertices with the adjacency matrix $$a_{ij}^N=\begin{cases}
    N^{\alpha},&\text{if}\ j=i+1,\ i<N,\\
    0,&\text{else},
  \end{cases}$$ for some constant $\alpha>0$. Hence \[\max_{1\le i\le N}\sum_{j=1}^Na_{ij}^N=N^{\alpha}.\]
For $0\le\alpha<2$, (\textbf{H}) is satisfied, and there is no difference in the two representations in the convergence of the graph sequence. With the atomic {measure} representation $\widetilde{\eta}_N\in\cB(X,\cM_+(X))$:
  \begin{equation*}
    \tilde{\eta}_N^x=N^{\alpha-1}\delta_{(2i+1)/(2N)},\quad x\in I^N_i,\quad i=1,\ldots, N-1,
  \end{equation*}
where $\{I_i^N\}_{i=1}^N$ is given in \eqref{Eq-equipartition}. 
The sequence converges to $\eta (\alpha)\in\cB(X,\cM_+(X))$ for $0<\alpha\le1$,  given by \begin{equation}\label{Eq-xi-limit}
\eta^x(\alpha)=\begin{cases}\delta_x,& \text{for}\ x\in X,\ \alpha=1,\\
0,& \text{for}\ x\in X,\ \alpha<1,
\end{cases}
\end{equation}
which is a non-trivial graph limit if and only if $\alpha=1$; moreover, the sequence diverges in $\cB(X,\cM_+(X))$ for $1<\alpha<2$. In contrast, using the function representation, one can construct a 2-uniform DHGM 
$\eta_N\in \cB(X,\cM_+(X))$ with the density at each $x\in X$ given by $W^N(x,\cdot)$:
  \begin{equation*}
   W^N(x,y) = \frac{\rd\eta_N^x(y)}{\rd \mu_X(y)}=\begin{cases}
      N^{\alpha},&\text{if}\quad  x\in I^N_i,\quad y\in I^N_{i+1},\quad i=1,\ldots,N-1,\\
      0,&\text{else}\quad \mu_X\text{-a.e.}\ y\in[0,1],
    \end{cases}
  \end{equation*}
It is obvious that $$\|W^N\|_{\mathsf{L}^p(X^2)}=\begin{cases}N^{\alpha-2/p}(N-1)^{1/p},& \text{if}\ 1\le p<\infty,\\ 
N^{\alpha},& \text{if}\ p=\infty.\end{cases}$$ Hence, for  $0<\alpha<1$,  $\{W^N\}_{N\in\mathbb{N}}$ converges to  $0\in \mathsf{L}^p(X^2)$ for $1\le p<\tfrac{1}{\alpha}$ while diverges in $L^{p}(X^2)$ for $\tfrac{1}{\alpha}\le p\le \infty$ (note that despite the norm of the sequence converges to $1$ for $p=\alpha^{-1}$, the sequence diverges in $\mathsf{L}^p(X^2)$). Hence in a space of $\mathsf{L}^p$-integrable graphons for $1\le p\le\infty$,  the sequence of graphs either converges to a trivial limit or diverges. {Nevertheless, by Proposition~\ref{prop-no-difference-representation}, the sequence of graphs represented in either way converges to $\eta(\alpha)$ under the metric $d_{\infty}$, which is not absolutely continuous w.r.t. the Lebesgue measure on $X^2$ for $\alpha=1$, which explains why $\{W^N\}_{N\in\mathbb{N}}$ diverges in $\mathsf{L}^p(X^2)$ in this case. This case demonstrates DHGM embraces graph limits which are not absolutely continuous.} Despite the \emph{cut metric} \cite{J13}\footnote{When $X=[0,1]$, the cut metric between two $\mathsf{L}^1$-graphons $W_1$ and $W_2$ equals the supremum $\int_{A\times B}(W_1(x)-W_2(x))dx$ over all Borel meaurable subsets $A,B\subseteq[0,1]$.  For the definition of cut metric for a general metric space $X$, we refer the interested reader to e.g. \cite{L12,J13}.} is different from the metric induced by the $L^1$-norm, the convergence of $\{W^N\}_{N\in\mathbb{N}}$ in $L^1$-norm coincides with the convergence in the cut metric. 
\end{example}

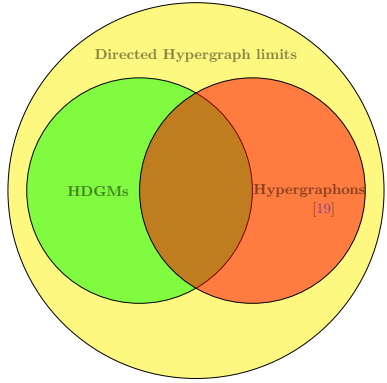
\begin{figure}
\begin{center}
\scalebox{.5}{\begin{tikzpicture}
    \begin{scope}[shift={(3cm,-5cm)}, fill opacity=0.5]
        \draw[fill=yellow, draw = black] (0,0) circle (5);
        \draw[fill=green, draw = black] (-1.5,0) circle (3);
    \draw[fill=red, draw = black] (1.5,0) circle (3);
    \node at (0,3.6) (A) {\large\textbf{Directed Hypergraph limits}};
    \node at (-2.6,0) (B) {\large\textbf{DHGMs}};
    \node at (3.02,0) (C) {\large\textbf{Hypergraphons}};
    \node at (3.4,-.5) (D) {\large\textbf{\cite{ES12}}};
    \end{scope}
\end{tikzpicture}}
\end{center}
\caption{Venn diagram for different hypergraph limits.}\label{fig-1}
\end{figure}

\begin{remark}
\begin{enumerate}
\item[$\bullet$] The topology induced by the total variation distance is stronger than the weak topology (note that for absolutely continuous measures of the same mass, the distance between two measures in total variation norm is equivalent to the distance of their densities in $\mathsf{L}^p$-norm), which results in a fact that the $\mathsf{L}^p$ graphons are a proper subset of the space of positive measures in general. This also explains why DHGMs may contain non-dense graph limits of certain classes.
\item[$\bullet$]
{Here we provide some explanation on why our definition of hypergraphon differs from the standard one defined in \cite{ES12}, pertaining the dimension of the vertex space. In \cite{ES12}, in order to define limit object of $k$-uniform hypergraphs w.r.t. the topology induced by \emph{homomorphism density}, one needs to regard the limit object as a measurable subset of $[0,1]^{2^k}$, where $2^k$ is the number of subsets of $[k]$. Since the empty set plays no role \cite{L12}, one can also barely consider measurable subsets of $[0,1]^{2^k-1}$, by excluding the coordinate of a point indexed by the empty set. The reason of considering the limit of hypergraphs as such sets rather than a function is due to the topology induced by some generalized \emph{homomorphism density} \cite{ES12}. For $k=2$, homomorphism density from graph $\mathcal{F}$ to graph $\mathcal{G}$ refers to as the probability of a random mapping from the vertex set of $\mathcal{F}=([N],\mathcal{E})$ to that of $\mathcal{G}$, whose definition can be extended as 
\begin{equation}\label{homomorphism-density}
t(\mathcal{F},\mathcal{W})=\int_{[0,1]^N}\prod_{(i,j)\in\mathcal{E}}\mathcal{W}(x_i,x_j)d x_1 \ldots d x_N,\end{equation}
representing the homomorphism density from a finite graph $\mathcal{F}$ to a limit object (graphon)\---a measurable function $\mathcal{W}\colon[0,1]^2\to[0,1]$. The convergence characterized in terms of the homomorphism density can also be characterized by the \emph{cut distance} \cite[Theorem~3.8]{BCLSV08}. However, in order to ensure the existence of the limit for a convergent sequence of hypergraphs w.r.t. the distance defined below analogous to \eqref{homomorphism-density} \cite{L12}:
\begin{equation}\label{homomorphism-density-hypergraph}
t(\mathcal{F},\mathcal{W})=\int_{[0,1]^N}\prod_{(i_1,\ldots,i_k)\in\mathcal{E}}\mathcal{W}(x_{i_1},\ldots,x_{i_k})d x_1 \ldots d x_N,\end{equation}
for any $k$-uniform test hypergraph $\mathcal{F}=([N],\mathcal{E})$, a \emph{na\"{i}ve} straightforward extension to limit of hypergraphs defined as measurable functions on $[0,1]^k$, is \emph{impossible}. The interested reader may refer to \cite{L12} for an example. Hence an appropriate generalized definition of homomorphism density as a probability when regarding hypergraph limit as measurable subsets of $[0,1]^{2^k}$, is proposed \cite{ES12}. The existence of limit theorem, confined to the family of such measurable sets as the space of hypergraph limits, holds w.r.t. this generalized definition of homomorphism density \cite{ES12}. In short, \emph{such a definition of hypergraph limit is for the completeness of the limit objects under homomorphism density}. In contrast, {with a different metric which is widely used in the context of dynamical networks,} we treat these limits from the measure-valued function viewpoint, and the completeness is naturally guaranteed due to very basic properties of space of continuous/bounded functions on a compact complete metric space. Hence, hypergraphons as HDMs defined in Definition~\ref{hyperdigraph}, may not coincide with that defined as hypergraphons in \cite{ES12}. For instance for $k=2$, despite the uniform weak topology induced by the uniform bounded Lipschitz metric is stronger than the topology induced by homomophism density (essentially due to the ``uniformity'' $\sup_{x\in X}$), the space of graph measures \cite{KX21} is \emph{not} a proper subset of any space of integral graphons as it contains non-absolutely continuous measure-valued functions, evidenced by Example~\ref{non-unique-representation} as well as examples below. {Hence these two definitions of graph limits may share a non-empty intersection of \emph{dense} graph limits (see Figure~\ref{fig-1}). Nevertheless, it remains to see how much they have in common. Analogously, the definition of hypergraphs in Definition~\ref{hyper-DGM} have a non-empty intersection with  hypergraphons defined in \cite{ES12}. It would be of great interest to figure out the difference set of these two classes of hypergraph limits.}} 
\item[$\bullet$] {It is noteworthy that graph limits have been defined as measures on the square $X^2$ \cite{KLS19} in the literature, which particularly are used to characterize sparse graph limits. Nevertheless, the DGMs \cite{KX21} as well as HDGMs defined here are measure-valued \emph{functions} on $X$, and the natural motivation of such a definition rather than product measures or kernels (graphons) on the product space owes to the \emph{fiberized characteristic equation} indexed by vertices of graph limits.} 
Nevertheless, though rare in the literature, it is noteworthy that similar topics have been dealt with under the cut distance in a very elegant way in \cite{JPS21}. The results in \cite{JPS21} only pertain sparse graph limits. It will be interesting to see if the approach in \cite{JPS21} can be further extended to non-sparse (hyper)graph limits.
\end{enumerate}
\end{remark}

{Since every HDGM is a finite sum of uniform HDGMs, 
in the following we provide several generic examples of $k$-uniform HDGMs which are not dense (and thus not hypergraphons). These examples showcase the difference between our definition of hypergraph limits and those in \cite{ES12}.}

{\begin{example}\label{ex-torical-graphop}
For every $N\in\mathbb{N}$, let $\mathcal{G}^N=(\mathcal{V}^N,\mathcal{E}^N)$ with $\mathcal{V}^N=[N]^2$ and $$\mathcal{E}^N=\{((i_1,i_2),(j_1,j_2))\in(\mathcal{V}^N)^2\colon i_1-j_1=\lfloor N/2\rfloor\mod N\}$$ with the weights 
\[a_{(i_1,i_2),(j_1,j_2)}=\begin{cases}1,& \text{if}\ j_1-i_1=\lfloor N/2\rfloor\mod N,\\ 
0,& \text{otherwise}.\end{cases}\]
Now we represent $\mathcal{G}^N$ as a \emph{graph measure} \cite{KX21} $\eta_N\in\cB(X,\cM_+(X))$ with $X=\mathbb{T}^2$ given as follows:
\begin{align*}
\eta_N^x=\lambda_{\mathbb{T}\big|_{\bigl\langle\frac{2i-1}{2N}+\tfrac{1}{2}\bigr\rangle}},\quad x\in \tilde{I}^N_i\times \tilde{I}^N_j,\quad \text{for}\ i,j=1,\ldots,N,
\end{align*}
where 
\begin{equation}\label{Eq-equipartition-circle}
\tilde{I}^N_i=[\tfrac{i-1}{N},\tfrac{i}{N}[\quad \text{for}\ i=1,\ldots,N,
\end{equation} and $\mathbb{T}_a\coloneqq\{a\}\times\mathbb{T}\subseteq X$ for every $a\in\mathbb{T}$. Then it is readily shown that $\eta_N$ converges to $\eta\in\cB(X,\mathcal{M}_+(X))$ with 
\[\eta^x=\lambda\bigl|_{\mathbb{T}_{\bigl\langle x_1+\tfrac{1}{2}\bigr\rangle}},\quad x=(x_1,x_2)\in X\]
which is an absolutely continuous measure supported on a subset of $X$ of a lower Hausdorff dimension.  Geometrically, on the generalized graph $\eta$, every vertex $x=(x_1,x_2)$ on the circle $\mathbb{T}^{x_2}\coloneqq\{y\in X\colon y_2=x_2\}$ connects to every vertex on the circle $\mathbb{T}_{\langle x_1+1/2\rangle}$ perpendicular to $\mathbb{T}^{x_2}$. We call {the graph limit} $\eta$ a \emph{torical graph measure}.
\end{example}}

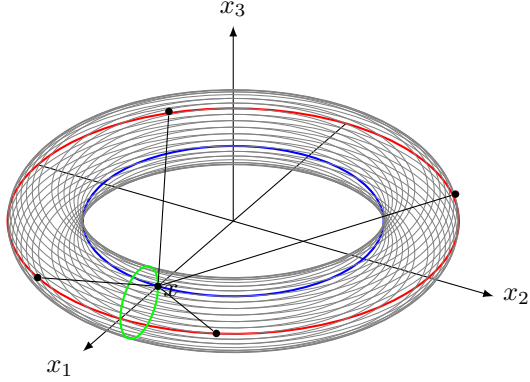
\begin{figure}
\begin{center}
\scalebox{1.0}{\tdplotsetmaincoords{60}{120}
\begin{tikzpicture}[tdplot_main_coords]
\draw [-latex] (-3,0,0) -- (4,0,0) node [below left] {$x_1$};
\draw [-latex] (0,-3,0) -- (0,4,0) node [right] {$x_2$};
\draw [-latex] (0,0,0)  -- (0,0,3) node [above] {$x_3$}; 
\begin{scope}[canvas is xy plane at z=0]
  \draw[thick, blue, decoration={markings}, postaction={decorate}] (0,0) circle (2);
  \draw[thick, red, decoration={markings}, postaction={decorate}] (0,0) circle (3);
\end{scope}
\foreach\z in {10,20,...,80}
{
  \pgfmathsetmacro\r{0.5*cos(\z)};
  \begin{scope}[canvas is xy plane at z=0.5*sin(\z)]
    \draw[very thin,gray] (0,0) circle (2.5+\r);
    \draw[very thin,gray] (0,0) circle (2.5-\r);
  \end{scope}
  \begin{scope}[canvas is xy plane at z=-0.5*sin(\z)]
    \draw[very thin,gray] (0,0) circle (2.5+\r);
    \draw[very thin,gray] (0,0) circle (2.5-\r);
  \end{scope}
}

\begin{scope}[canvas is xz plane at y=0]
  \draw[thick,green]  (2.5,0) circle (0.5);
\end{scope}
\node[circle,fill,inner sep=1pt] (A) at (2,0) {};
\node[circle,fill,inner sep=1pt] (B1) at (-2.1,-2.2) {};
\node[circle,fill,inner sep=1pt] (B2) at (2.6,-1.5) {};
\node[circle,fill,inner sep=1pt] (B3) at (2.7,1.3) {};
\node[circle,fill,inner sep=1pt] (B4) at (-2.1,2.2) {};

\node[] at (2,0.2) {$x$};
\draw (A) -- (B1);
\draw (A) -- (B2);
\draw (A) -- (B3);
\draw (A) -- (B4);
\end{tikzpicture}
}
\end{center}
\caption{Example~\ref{ex-torical-graphop}. Torical graph measure. Every point on the blue circle connects to every point on the red circle.}
\end{figure}

\begin{example}\label{ex-triangle}
Consider a sequence of hypergraphs $\{\mathcal{H}^N\}_N$ of rank $3$ with vertex sets being triangularization points of $X$: $$\mathcal{V}^N=\Bigl\{\bigl(\tfrac{i}{2N},\tfrac{\sqrt{3}}{2}\cdot\tfrac{j}{2N}\bigr)\colon j\le \min\{2i,4N-2i\},\quad i,j=0,\ldots,2N\Bigr\},$$ \begin{multline*}
\mathcal{E}^N=\Bigl\{\bigl(\bigl(\tfrac{i}{2N},\tfrac{\sqrt{3}}{2}\cdot\tfrac{j}{2N}\bigr),\bigl(\tfrac{i+N}{2N},\tfrac{\sqrt{3}}{2}\cdot\tfrac{j}{2N}\bigr),\bigl(\tfrac{i+N}{2N},\tfrac{\sqrt{3}}{2}\cdot\tfrac{j+N}{2N}\bigr)\bigr)\colon \\  j\le \min\{2i,2N-2i\},\quad i,j=0,\ldots,N\Bigr\}
\end{multline*}Let $a^N_{v_1,v_2,v_3}=\begin{cases}1& \text{if}\ (v_1,v_2,v_3)\in\mathcal{E}^N,\\ 0&\text{otherwise}.\end{cases}$ We represent $\mathcal{H}^N$ as a 3-uniform HGM $\eta_N\in\cB(X,\cM_+(X^2))$:
\[\eta_N^x=\begin{cases}
\delta_{\bigl(\tfrac{2i-1}{4N},\tfrac{\sqrt{3}}{2}\cdot\tfrac{2j-1}{4N}\bigr)},& \text{if}\quad x\in A^N_i\times B^N_j,\quad j\le \min\{2i,2N-2i\},\quad  i,j=0,\ldots,N,\\
0,& \text{otherwise}.
\end{cases}\]
 Let $X$ be the equilateral triangle with vertices $(0,0)$, $(0,\frac{\sqrt{3}}{2})$,  and $(\frac{1}{2},\frac{1}{2})$. Define $\eta\in\cB(X,\cM_+(X^2))$:
\[\eta^x=\begin{cases}\delta_{\bigl(x+\bigl(\tfrac{1}{2},0\bigr),x+\bigl(\tfrac{1}{4},\tfrac{\sqrt{3}}{4}\bigr)\bigr)},& \text{if}\quad 2x\in X,\\ 0,& \text{otherwise}.\end{cases}\] 
It can be readily shown that $d_{\infty}(\eta_N,\eta)\to0$ as $N\to\infty$. Note that $\eta$ is a $3$-uniform hypergraphing. We call $\eta$ a \emph{sparse triangle $3$-uniform hypergraphing}.
\begin{figure}
\begin{center}
\begin{tikzpicture}
   [
        dot/.style={circle,draw=black, fill,inner sep=1pt},scale=4
    ]
    \draw (1,.01) -- node[below,yshift=-1mm] {1} (1,0);
\node[below,xshift=-2mm,yshift=0mm] at (0,0) {0};
     \node[below,xshift=-3mm,yshift=-2mm] at (0,1){1};
\foreach \y in {.1,.2,...,1}
    \draw (.01,\y) -- node[below,xshift=-3mm,yshift=2mm] {} (0,\y);
\foreach \y in {0,.1,...,1.1}
     {\node[red,dot] at (\y,0){};
     }
    \foreach \y in {0,.1,...,1}
     {\node[red,dot] at (\y+0.05,.0866){};
     }
 \foreach \y in {0,.1,...,.9}
     {\node[red,dot] at (\y+0.1,2*.0866){};
     }
 \foreach \y in {0,.1,...,.8}
     {\node[red,dot] at (\y+0.15,3*.0866){};
     }
 \foreach \y in {0,.1,...,.7}
     {\node[red,dot] at (\y+0.2,4*.0866){};
     }
 \foreach \y in {0,.1,...,.6}
     {\node[red,dot] at (\y+0.25,5*.0866){};
     }
 \foreach \y in {0,.1,...,.5}
     {\node[red,dot] at (\y+0.3,6*.0866){};
     }
 \foreach \y in {0,.1,...,.4}
     {\node[red,dot] at (\y+0.35,7*.0866){};
     }
 \foreach \y in {0,.1,...,.3}
     {\node[red,dot] at (\y+0.4,8*.0866){};
     }
 \foreach \y in {0,.1,...,.2}
     {\node[red,dot] at (\y+0.45,9*.0866){};
     }
\node[red,dot] at (.5,10*.0866){};
\draw[->,very thick,-latex] (0,-.1) -- (0,1.0);
\draw[->,thick,-latex] (-.1,0) -- (1.1,0);
 \foreach \y in {0,.1,...,.6}
     {
\draw[thick,blue] (\y,0) -- (\y+.25,.433);
\draw[thick,blue] (\y,0) -- (\y+.5,0);
\draw[thick,blue] (\y+0.5,0) -- (\y+.25,.433);
}
 \foreach \y in {0,.1,...,.5}
     {
\draw[thick,blue] (\y+.05,.0866) -- (\y+.25+.05,.0866+.433);
\draw[thick,blue] (\y+.05,.0866) -- (\y+.5+.05,.0866+0);
\draw[thick,blue] (\y+0.5+.05,.0866) -- (\y+.25+.05,.0866+.433);
}
 \foreach \y in {0,.1,...,.4}
     {
\draw[thick,blue] (\y+2*.05,2*.0866) -- (\y+.25+2*.05,2*.0866+.433);
\draw[thick,blue] (\y+2*.05,2*.0866) -- (\y+.5+2*.05,2*.0866+0);
\draw[thick,blue] (\y+0.5+2*.05,2*.0866) -- (\y+.25+2*.05,2*.0866+.433);
}
 \foreach \y in {0,.1,...,.3}
     {
\draw[thick,blue] (\y+3*.05,3*.0866) -- (\y+.25+3*.05,3*.0866+.433);
\draw[thick,blue] (\y+3*.05,3*.0866) -- (\y+.5+3*.05,3*.0866+0);
\draw[thick,blue] (\y+0.5+3*.05,3*.0866) -- (\y+.25+3*.05,3*.0866+.433);
}
 \foreach \y in {0,.1,...,.2}
     {
\draw[thick,blue] (\y+4*.05,4*.0866) -- (\y+.25+4*.05,4*.0866+.433);
\draw[thick,blue] (\y+4*.05,4*.0866) -- (\y+.5+4*.05,4*.0866+0);
\draw[thick,blue] (\y+0.5+4*.05,4*.0866) -- (\y+.25+4*.05,4*.0866+.433);
}
 \foreach \y in {0}
     {
\draw[thick,blue] (\y+5*.05,5*.0866) -- (\y+.25+5*.05,5*.0866+.433);
\draw[thick,blue] (\y+5*.05,5*.0866) -- (\y+.5+5*.05,5*.0866+0);
\draw[thick,blue] (\y+0.5+5*.05,5*.0866) -- (\y+.25+5*.05,5*.0866+.433);
}
\draw[thick,green] (3*.05,3*.0866) -- (.25+3*.05,3*.0866+.433);
\draw[thick,green] (3*.05,3*.0866) -- (.5+3*.05,3*.0866+0);
\draw[thick,green] (0.5+3*.05,3*.0866) -- (.25+3*.05,3*.0866+.433);
  \fill[gray, opacity=.3] (0,0) -- (1,0) -- (0.5,0.866) -- cycle;
  \fill[gray, opacity=.5] (0,0) -- (.5,0) -- (0.25,0.866*.5) -- cycle;
  \end{tikzpicture}
  \caption{$3$-uniform triangular hypergraph $\mathcal{H}^5$ in Example~\ref{ex-triangle}. A typical hyper-edge consisting of three  vertices of an equilateral triangle of length 5 is colored in green (all others are colored in blue). The shaded area of a darker color consists of all vertices which belong to one ``triangle'' hyperedge located in the lower left corner.}\label{fig-ex-triangle}
\end{center}
\end{figure}
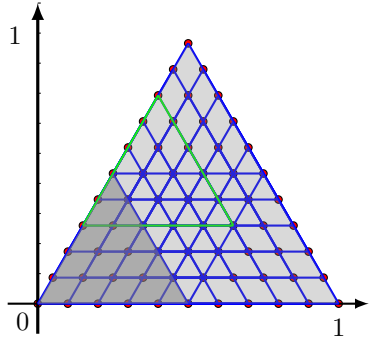
\end{example}

Below we provide more examples of $k$-uniform DHGM for general $k\in\mathbb{N}$.

\begin{example}
 Let $k\in\N\setminus\{1\}$ {and $f\in\cC(X,\R_+)$ and $\{A_x\subseteq X^{k-1}\colon x\in X\}$ such that $$\sup_{x\in X}\#A_x<\infty$$ 
 Define $\eta\colon X\to\cM_+(X^{k-1})$ by 
$$\eta^x=f(x)\lambda|_{A_x},\quad x\in X$$ 
Then $\eta$ is a $k$-uniform directed hypergraphing.} 
\end{example}

\begin{example}
 Let $k\in\N\setminus\{1\}$, $X=\mathbb{T}$, and $\mu_X=\lambda|_{\T}$. Define  the \emph{circular $k$-uniform DHGM} $\eta\in\cB(X,\cM_+(X^{k-1}))$: 
$$\eta^x=\otimes_{j=1}^{k-1}(\delta_{\langle x+1/4\rangle}+\delta_{\langle x+3/4\rangle}),\quad x\in\T.$$ {A geometric interpretation for $k=2$ is that every vertex $x$ on the graph limit connects to those two points on the circle with an angle of  $45^{\circ}$ clockwise and counter-clockwise, respectively.} It is readily verified that $\#\supp\eta^x=2^{k-1}$, {and $\eta$ is also a directed hypergraphing.} 
\end{example}

\begin{example}\label{ex-1}
 Let $X=[0,1]$, $\mu_X=\lambda|_X$, and $k\in\N\setminus\{1\}$. Let $\Delta\coloneqq\{(x_1,\ldots,x_k)\in\mathbb{R}_+^k\colon \sum_{j=1}^kx_j\le1\}$ and $\Delta_{x_1}\coloneqq\{y\in\Delta\colon y_1=x_1\}$
 be the slice of $\Delta$ at $x_1\in X$. Define $\eta\in\cB(X,\cM_+(X^{k-1}))$: $$\eta^{x_1}=k(1-x_1)^{k-1}\lambda|_{\Delta_{x_1}},\quad \text{for}\quad x_1\in X$$ Then it is straightforward to show that $\mu_X\otimes \eta^{x_1}=\lambda|_{\Delta}$,  which is a dense $k$-uniform DHGM. See Figure~\ref{fig-ex-1}.
  \begin{figure}[h]
\centering\scalebox{1.5}{
\begin{tikzpicture}[
        dot/.style={circle,draw=black, fill,inner sep=1pt},scale=1.5
    ]
\draw[->, >=latex, thick] (0,0,1.5) -- (0,0,2);
\draw[->, >=latex, thick] (1,0,0) -- (1.25,0,0);
\draw[->, >=latex, thick] (0,1,0) -- (0,1.25,0);
\draw[thick,dashed] (0,0,0) -- (0,0,2);
\draw[thick,dashed] (0,0,0) -- (1.25,0,0);
\draw[thick,dashed] (0,0,0) -- (0,1.25,0);
\draw[->, >=latex, thin] (1,1,0) -- (0.4,0.4,0.5);
\draw[dashed, thick] (0.3,0,0) -- (0.3,0.7,0);
\node[thick,black] at (0.4,-0.1,0) {\small$x_1$};
\node[thick,black] at (1,1.1,0) {\small$\Delta_{x_1}$};
\node[thick,black] at (-.1,0.05,0) {\small$0$};
\node[thick,black] at (1,-0.1,0) {\small$1$};
\node[thick,black] at (-.1,1,0) {\small$1$};
\node[thick,black] at (-.15,0,1.3) {\small$1$};
\draw[dashed, thick] (0.3,0,0) -- (0.3,0,1.1);
\draw[dashed, thick] (0.3,0,1.1) -- (0.3,0.7,0);
\draw[thick] (0,1,0) -- (1,0,0);
\draw[thick] (1,0,0) -- (0,0,1.5);
\draw[thick] (0,0,1.5) -- (0,1,0);
\fill[gray!, opacity=0.8] (0.3,0,0) -- (0.3,0.7,0) -- (0.3,0,1.1) -- cycle;
\end{tikzpicture}}
\caption{Example~\ref{ex-1} for $k=3$.}\label{fig-ex-1}
\end{figure}
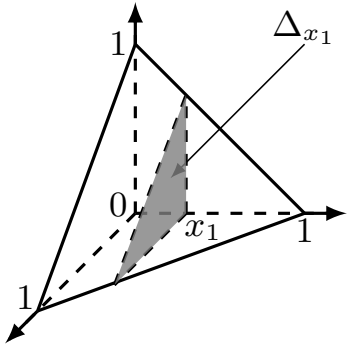
\end{example}

\begin{example}
Let $k\in\mathbb{N}\setminus\{1\}$. Consider the following sequence of dense \emph{directed $k$-uniform directed hypergraphs} $\{\mathcal{H}^N\}_{N\in\mathbb{N}}$ with $\mathcal{H}^N=(\mathcal{V}^N,\mathcal{E}^N)$ with $\mathcal{V}^N=[N]$ and 
\[\mathcal{E}^N=\Bigl\{(i,j_1,\ldots,j_{k-1})\colon N^{-1}j_{\ell}\in\bigl[N^{-1}\bigl(i+\bigl\lfloor \tfrac{N}{4}\bigr\rfloor\bigr),N^{-1}\bigl(i+\bigl\lfloor\tfrac{3N}{4}\bigr\rfloor\bigr)[\hspace{-.3cm}\mod 1,\ \ell=1,\ldots,k-1,\ 1\le i\le N\Bigr\}\] Let $X=\T$, $\mu_X=\lambda|_{\T}$. We can represent this hypergraph sequence as $\eta_N\in\cB(X,\cM_+(X^{k-1}))$:
\[\eta_N^x=\mathbbm{1}_{\prod_{\ell=1}^{k-1}\tilde{I}^N_{j_{\ell}}}(y_1,\ldots,y_{k-1}),\quad x\in \tilde{I}^N_i,\quad (i,j_1,\ldots,j_{k-1})\in\mathcal{E}^N,\]
where $\tilde{I}^N_{\ell}$ is defined in \eqref{Eq-equipartition-circle}. Then it is readily verified that $d_{\infty}(\eta^N,\eta)\to0$ for some 
 $k$-uniform DHGM $\eta\in\cB(X,\cM_+(X^{k-1}))$ defined by 
 $$\eta^x=\otimes_{j=1}^{k-1}\lambda\bigl|_{[x+1/4,x+3/4[\hspace{-.2cm}\mod1},\quad \text{for}\quad x\in X$$Note that $\eta$ is also a dense $k$-DHGM since $\inf_{x\in X}\dim_H(\supp \eta^x)=k-1=\dim_H X^{k-1}$.
\end{example}

\begin{example}
  Let $k\in\N\setminus\{1,2\}$ and $X=\T$. Define $\eta\in\cB(X,\cM_+(X^{k-1}))$: $$\eta^x=\otimes_{j=1}^{k-1}\eta_j^{x},\quad x\in X,$$ where for $j=1,\ldots,k-1$,
  \[\eta^{x}_j=\begin{cases}
    \delta_{\langle x+1/4\rangle}+\delta_{\langle x+3/4\rangle},& \text{if}\ j\ \text{is odd},\\
    \lambda\bigl|_{[x+1/4,x+3/4[\hspace{-.2cm}\mod1},&\text{if}\ j\ \text{is even}.
    \end{cases}\] Then by Definition~\ref{Def-dense-sparse}, $\eta$ is neither dense nor sparse.
\end{example}

\begin{example}\label{spherical-hypergraphop}
Let $d\in\N$, $2\le k\le d$ be an integer and $X=\mathbb{S}^d$ be the $d$-dimensional unit sphere. Let $\mathcal{E}=\{(x_1,\ldots,x_k)\in X^k\colon x_i\cdot x_j=0,\quad \text{for any}\quad i\neq j,\quad 1\le i,j\le k\}$ be the set of hyperedges. Then $(X,\mathcal{E})$ can be regarded as a generalized $k$-uniform hypergraph. Define $\eta\in\cB(X,\cM_+(X^{k-1}))$ to represent this generalized hypergraph: 
\[\eta^{x_1}=\lambda|_{S(x_1)},\quad x_1\in X,\]
where $$S(x_1)\coloneqq\{(x_2,\ldots,x_k)\in X^{k-1}\colon x_2\in x_1^{\perp},\ x_3\in x_1^{\perp}\cap x_2^{\perp},\ldots,\ x_{\ell}\in\cap_{j=1}^{\ell-1}x_j^{\perp},\ldots,\ x_k\in\cap_{j=1}^{k-1}x_j^{\perp}\},$$ where $a^{\perp}\coloneqq\{y\in X\colon a\cdot y=0\}$. It is easy to verify that $\eta$ is a HGM. Note that $$0<\dim_{H}(S(x_1))=\sum_{j=1}^{k-1}(d-j)=\tfrac{(2d-k)(k-1)}{2}<\dim_H(X^{k-1})=d(k-1)$$ Hence $\eta$ is neither dense nor sparse. We call this  $\eta$ \emph{$d$-dimensional spherical $k$-uniform HGM}.
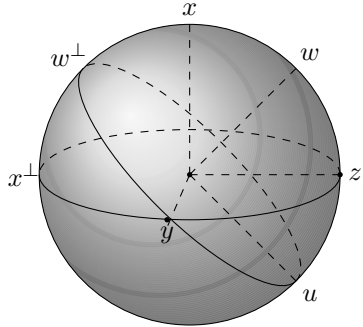
\begin{figure}[h]
  \begin{tikzpicture}
  \shade[ball color = gray!40, opacity = 0.4] (0,0) circle (2cm);
  \draw (0,0) circle (2cm);
  \draw[] (-2,0) arc (180:360:2 and 0.6);
  \draw[dashed] (2,0) arc (0:180:2 and 0.6);
  \draw[rotate=-45] (-2,0) arc (180:360:2 and 0.6);
  \draw[rotate=-45,dashed] (2,0) arc (0:180:2 and 0.6);
  \fill[fill=black] (0,0) circle (1pt);
  \fill[fill=black] (-0.3,-0.6) circle (1pt);
  \fill[fill=black] (2,0) circle (1pt);
  \draw[dashed] (0,0) -- node[above]{} (0,2);
  \draw[dashed] (0,0) -- node[above]{} (1.414,1.414);
  \draw[dashed] (0,0) -- node[above]{} (2,0);
\draw[dashed] (0,0) -- node[above]{} (-0.3,-0.7);
\draw[dashed] (0,0) -- node[above]{} (1.414,-1.414);
  \node[] at (0,2.2) {$x$};
  \node[] at (1.6,1.6) {$w$};
  \node[] at (-0.3,-0.8) {$y$};
  \node[] at (2.2,0) {$z$};
  \node[] at (-2.2,0) {$x^{\perp}$};
  \node[] at (-1.6,1.6) {$w^{\perp}$};
 \node[] at (1.6,-1.6) {$u$};
\end{tikzpicture}
\caption{$2$-dimensional spherical $3$-uniform HGM. $X=\mathbb{S}^2$. $\eta^x=\lambda|_{S(x)}$ with $S(x)=\{(y,z)\in X^2\colon y\in x^{\perp},\ z\in x^{\perp}\cap y^{\perp}\}$. Note that $\dim_H(S(x))=1$.}
\end{figure}
\end{example}

\section{Generalized interacting particle system}\label{sect-characteristics}
To state a generalized IPS on hypergraphs, we first collect some standing assumptions. 

\subsection{Assumptions}

\hfill\\

\noindent{$\mathbf{(A1)}$} $(X,\mathfrak{B}(X),\mu_X)$ is a compact Polish probability space equipped with the metric induced by the $\ell_1$-norm of $\R^{r_1}\supseteq X$.

\medskip

\noindent{$\mathbf{(A2)}$} Let $J\subseteq\mathbb{N}\setminus\{1\}$. For $\ell\in J$, $(t,u)\mapsto g_{\ell}(t,u)\in\R^{r_2(\ell-1)}$ is continuous in $t\in\cI$, and locally Lipschitz continuous in $u\in \R^{\ell r_2}$ uniformly in $t$, i.e., for every $u\in \R^{\ell r_2}$, there exists a neighbourhood $\mathcal{N}\subseteq\R^{\ell r_2}$ of $u$ such that
$$\sup_{t\in\cI}\sup_{\begin{array}{l}
  u_1\neq u_2,\\
u_1,u_2\in\mathcal{N}
\end{array}}\frac{|g_{\ell}(t,u_1)-g_{\ell}(t,u_2)|}{|u_1-u_2|}<\infty,$$
for $\ell\in J$.\\
\medskip
\noindent{$\mathbf{(A3)}$} $(t,x,\phi)\mapsto h(t,x,\phi)\in\R^{r_2}$ is continuous in $t\in\cI$, and locally Lipschitz continuous in $\phi\in \R^{r_2}$ uniformly in $(t,x)$, i.e., for every $\phi\in \R^{r_2}$ for some $r_2\in\N$, there exists a neighbourhood $\mathcal{N}\subseteq\R^{r_2}$ of $\phi$ such that $$\sup_{t\in\cI}\sup_{x\in X}\sup_{\tiny\begin{array}{l}
  \phi_1\neq \phi_2,\\
\phi_1,\phi_2\in\mathcal{N}
\end{array}}\frac{|h(t,x,\phi_1)-h(t,x,\phi_2)|}{|\phi_1-\phi_2|}<\infty.$$

\medskip
\noindent{$\mathbf{(A4)}$} $\eta_{\ell}\in \mathcal{B}(X,\cM_+(X^{\ell-1}))$, for $\ell\in J$.

\medskip
\noindent{$\mathbf{(A4)'}$} $\eta_{\ell}\in \mathcal{C}(X,\cM_+(X^{\ell-1}))$, for $\ell\in J$.

\medskip
\noindent{$\mathbf{(A5)}$} $\nu_{\cdot}\in \mathcal{C}(\cI,\mathcal{B}_{*}(X,\cM_+(\R^{r_2})))$ is \emph{uniformly compactly supported} in the sense that
there exists a compact set $E_{\nu_{\cdot}}\subseteq\R^{r_2}$ such that $\cup_{t\in\cI}\cup_{x\in X}\supp\nu_t^x\subseteq E_{\nu_{\cdot}}$.

\medskip
\noindent ($\mathbf{A6}$) There exists a convex compact set $Y\subseteq\R^{r_2}$
 such that for all $\nu_{\cdot}$ satisfying ($\mathbf{A5}$) uniformly supported within $Y$, the following inequality holds:
\[V[\eta,\nu_{\cdot},h](t,x,\phi)\cdot\upsilon(\phi)\le0,\quad \text{for all}\ t\in\mathcal{I},\ x\in X,\ \phi\in\partial Y,\]
where $\partial Y=Y\cap\overline{\R^{r_2}\setminus Y}$, $\upsilon(\phi)$ is the outer normal vector at $\phi$, and
\begin{multline}\label{Eq-VOperator}
V[\eta,\nu_{\cdot},h](t,x,\phi)
=\sum_{\ell\in J}\int_{X^{\ell-1}}\underbrace{\int_{\R^{r_2}}\ldots\int_{\R^{r_2}}}_{\ell-1}g_{\ell}
(t,\phi,\psi_1,\ldots,\psi_{\ell-1})
\rd\nu_t^{y_{\ell-1}}(\psi_{\ell-1})
\cdots\rd\nu_t^{y_{1}}(\psi_{1})\\
\cdot\rd\eta^x_{\ell}(y_1,\ldots,y_{\ell-1})+h(t,x,\phi),\quad t\in\cI, x\in X,\ \phi\in \R^{r_2}
\end{multline}
\medskip

\noindent{$(\mathbf{A7})$}  $(t,x,\phi)\mapsto h(t,x,\phi)\in\R^{r_2}$ is continuous in $x$ uniformly in $\phi$:
\[\lim_{|x-x'|\to0}\sup_{\phi\in Y}|h(t,x,\phi)-h(t,x',\phi)|=0,\quad t\in\cI,\]
where $Y$ is the compact set given in ($\mathbf{A6}$). Moreover, $h$ is integrable uniformly in $x$:
\[\int_0^T\int_Y\sup_{x\in X}|h(t,x,\phi)|\rd\phi\rd t<\infty.\]

Under {$\mathbf{(A1)}$}-{$\mathbf{(A5)}$}, the \emph{Vlasov operator} $V$ given in \eqref{Eq-VOperator} is well defined.

Now we provide some intuitive explanation for these assumptions. These assumptions can be regarded as analogues of those for digraph measures in \cite{KX21}, in the context of DHGMs. Assumption $(\mathbf{A1})$ means that the underlying generalized directed hyper-digraphs (DHGMs) have the same compact vertex space $X$. Such compactness is crucial in establishing discretization of DHGMs. Assumptions $(\mathbf{A2})$-$(\mathbf{A3})$ are the standard Lipschitz conditions for the well-posedness of (non-local) ODE models. Assumption $(\mathbf{A4})$ means that we interpret the hyper-graphs as measure-valued functions (see also \cite{KX21}); note that we can think of $\eta^x_{\ell}$ as the local hyper-edge density or connectivity near vertex $x$. Next, we need the assumption for the approximation of the VE (i.e., the mean field equation for the IPS) that the family of DHGMs $\eta^x$ are \emph{continuous} in the vertex variable $x$, which is encoded in assumption $(\mathbf{A4})'$ (essentially used in Lemma~\ref{le-graph}). As mentioned in \cite{KX21}, $\mathbf{(A4)'}$ is sufficient but not necessary for the approximation results. For instance, one can relax this assumption by allowing $x\mapsto\eta_{\ell}^x$ ($\ell\in J$) to have finitely many discontinuity points.

Next, we propose a generalized network on $\# J$ limits of sequences of directed hypergraphs of cardinality $\ell$, for $\ell\in J$.
\begin{equation}\label{general-characteristic}
  \begin{split}
    \frac{\rd\phi(t,x)}{\rd t}=&V[\eta,\nu_{\cdot},h](t,x,\phi(t,x)),\quad t\in\cI,\quad x\in X,\\
    \phi(0,x)=&\varphi(x),\hspace{3cm}  x\in X.
  \end{split}
\end{equation}

Indeed, a special case of the network \eqref{general-characteristic} is the following discrete set of ODEs:

\begin{equation}\label{Kuramoto}
  \begin{split}
    \dot{\phi}_i^N(t)    =&h_i^N(t,\phi^N_i(t))+\sum_{\ell\in J}\frac{1}{N^{\ell-1}}\sum_{j_1=1}^N\cdots\sum_{j_{\ell-1}=1}^N W^{\ell,N}_{i,j_1,\ldots,j_{\ell-1}}g_{\ell}(t,\phi_i^N(t),\phi_{j_1}^N(t),\ldots,\phi_{j_{\ell-1}}^N(t)).
  \end{split}
\end{equation}

\emph{We emphasize that $J$ is independent of $N$.} To verify that \eqref{Kuramoto} is a special case of \eqref{general-characteristic}, let $X=[0,1]$, and $\{I^N_i\}_{i=1}^N$ be an equipartition of $X$ with $I_i^N$ being defined in Example~\ref{Eq-equipartition}, $\mu_{X}\in\cP(X)$ be the reference measure,
$$\nu_t^{x}=\delta_{\phi^N_i(t)},\quad x\in I^N_i,\quad i=1,\ldots,N,$$ $$\frac{\rd\eta_{\ell,N}^{x}(y_1,\ldots,y_{\ell-1})}{\prod_{j=1}^{\ell-1}\rd \mu_{X}(y_j)}=W^{\ell,N}_{i_1,\ldots,i_{\ell-1}},\quad (y_1,\ldots,y_{\ell-1})\in I^N_{i_1}\times\ldots\times I^N_{i_{\ell-1}},\quad \ell\in J,$$ $$\phi(t,x)=\phi^N_i(t),\quad h(t,x,\phi(t,x))=h^N_i(t,\phi^N_i(t)),\quad x\in I^N_i.$$
Substituting the expressions above into   \eqref{general-characteristic} simply yields \eqref{Kuramoto}.

The following well-posedness of the network \eqref{Kuramoto} is a standard result of ODE theory \cite{T12}.
\begin{proposition}\label{prop-discrete-characteristic}
  Assume $h_i^N$ (for $i=1,\ldots,N$) and $g_{\ell}$ (for $\ell\in J$) are locally Lipschitz. Then there exists a local solution to the IVP of \eqref{Kuramoto}. In particular, if there exists a compact positively invariant set for \eqref{Kuramoto}, then the solution is global.
\end{proposition}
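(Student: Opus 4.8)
The plan is to recognize \eqref{Kuramoto} as a finite-dimensional initial value problem (IVP) and invoke the classical Picard--Lindel\"of and continuation theorems from ODE theory. First I would stack the $N$ unknowns into a single vector $\Phi^N=(\phi_1^N,\ldots,\phi_N^N)$ living in the finite-dimensional Euclidean space $(\R^{r_2})^N$, and rewrite the system in the compact form $\dot{\Phi}^N=F^N(t,\Phi^N)$, where the $i$-th block of $F^N$ is exactly the right-hand side of \eqref{Kuramoto}. Since the weights $W^{\ell,N}_{i,j_1,\ldots,j_{k_\ell-1}}$ are fixed real constants and the number of summands (over $\ell$ and over the indices $j_1,\ldots,j_{k_\ell-1}$) is finite and independent of $t$ and of $\Phi^N$, the regularity of $F^N$ is inherited directly from that of the $h_i^N$ and the $g_\ell$.

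The key step is to verify that $F^N$ is continuous in $t$ and locally Lipschitz in $\Phi^N$. Continuity in $t$ is immediate, as each $h_i^N(t,\cdot)$ and each $g_\ell(t,\cdot)$ is continuous in $t$ and $F^N$ is a finite linear combination of such terms. For the Lipschitz property, fix a base point $\Phi^N_*$ and use the local Lipschitz hypotheses to choose a neighborhood of $\Phi^N_*$ on which each $h_i^N(t,\cdot)$ and each $g_\ell(t,\cdot)$ is Lipschitz with a constant uniform in $t\in\cI$. Because each summand factors through the coordinate projections $\Phi^N\mapsto(\phi_i^N,\phi_{j_1}^N,\ldots,\phi_{j_{k_\ell-1}}^N)$, which are $1$-Lipschitz, and because finite sums and scalar multiples preserve local Lipschitz continuity, $F^N(t,\cdot)$ is Lipschitz on a (possibly smaller) neighborhood of $\Phi^N_*$ with a constant uniform in $t$. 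The Cauchy--Lipschitz theorem then yields a unique local solution of the IVP on some interval $[0,\tau)$, proving the first assertion.

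For the second assertion I would invoke the standard escape (continuation) lemma: extending the local solution to its maximal interval of existence $[0,T_{\max})$, either $T_{\max}=\infty$, or the solution leaves every compact subset of $(\R^{r_2})^N$ as $t\uparrow T_{\max}$. If the initial datum $\varphi$ lies in a compact positively invariant set $K$ for \eqref{Kuramoto}, then by forward invariance the trajectory remains in $K$ throughout its interval of existence and hence stays bounded. This excludes the escape alternative, forcing $T_{\max}=\infty$, i.e.\ global existence.

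Since everything reduces to textbook ODE theory, there is no genuine obstacle here. The only point meriting a little care is the passage from the merely \emph{pointwise} local Lipschitz hypotheses on $g_\ell$ and $h_i^N$ (stated on unspecified neighborhoods, uniformly in $t$) to a bona fide local Lipschitz bound for the \emph{coupled} field $F^N$ on a fixed neighborhood in the product space. This is resolved by intersecting the finitely many neighborhoods supplied by the hypotheses and using that the coupling enters through finitely many $1$-Lipschitz coordinate projections weighted by bounded, $t$-independent coefficients.
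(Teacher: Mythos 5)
Your proof is correct and takes essentially the same route as the paper, which offers no written argument at all for this proposition but simply declares it ``a standard result of ODE theory'' with a citation to a textbook. Your stacking of the system into a single finite-dimensional IVP, the verification that the coupled field inherits local Lipschitz continuity from $h_i^N$ and $g_\ell$ (via finitely many coordinate projections and fixed weights), and the continuation/escape-lemma argument combined with the compact positively invariant set are exactly the details that citation is meant to cover.
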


The IVP of \eqref{general-characteristic} confined to a finite time interval $\cI$ is the so-called \emph{fiberized equation of characteristics} (or \emph{fiberized characteristic equation}) \cite{KM18,KX21}. When the underlying space $X$ is finite, and the measures $\nu_t^x$ and $\eta^x_{\ell}$ for all $x\in X$ are finitely supported, \eqref{general-characteristic} becomes a system of ODEs as \eqref{Kuramoto} coupled on a finite set of directed graphs in terms of $\{\eta_{\ell}\}_{\ell\in J}$. Hence, the fiberized characteristic equation connects a finite-dimensional IPS and the VE, while maintaining the information about both systems. Analogous to Proposition~\ref{prop-discrete-characteristic}, the well-posedness of \eqref{general-characteristic} is also a standard result from ODE theory \cite{T12}.

\begin{theorem}\label{theo-well-posedness-characteristic}
Assume ($\mathbf{A1}$)-($\mathbf{A5}$). Let $\phi_0\in\mathcal{B}(X,\R^{r_2})$. Then for every $x\in X$ and $t_0\in\cI$, there exists a solution $\phi(t,x)$ to the IVP of \eqref{general-characteristic} with $\phi(t_0,x)=\phi_0(x)$ for all $t\in(T^{x,t_0}_{\min},T^{x,t_0}_{\max})\cap\cI$ with $(T^{x,t_0}_{\min},T^{x,t_0}_{\max})\subseteq\R$ being a neighbourhood of $t_0$ such that
\begin{enumerate}
\item[(i)] either (i-a) $T^{x,t_0}_{\max}>T$ or (i-b) $T^{x,t_0}_{\max}\le T$ and $\lim_{t\uparrow T^{x,t_0}_{\max}}|\phi(t,x)|=\infty$ holds, and
\item[(ii)] either (ii-a) $T^{x,t_0}_{\min}<0$ or (ii-b) $T^{x,t_0}_{\min}\ge0$ and $\lim_{t\downarrow T^{x,t_0}_{\min}}|\phi(t,x)|=\infty$ holds.
\end{enumerate}
In addition, assume ($\mathbf{A6}$) and $\nu_{\cdot}$ is uniformly supported within $Y$, then $(T^{x,t_0}_{\min},T^{x,t_0}_{\max})\cap\cI=\cI$ for all $x\in X$, and there exists a {fiberized flow of the vector field $V[\eta,\nu_{\cdot},h]$ such that
\begin{align*}
\frac{d}{dt}\Phi^x_{t,0}[\eta,\nu_{\cdot},h](\phi)=&V[\eta,\nu_{\cdot},h](t,x,\Phi^x_{t,0}[\eta,\nu_{\cdot},h](\phi)),\quad t\in\cI,\\
\Phi^x_{0,0}[\eta,\nu_{\cdot},h](\phi)=&\phi
\end{align*}}
\end{theorem}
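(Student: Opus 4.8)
The plan is to freeze the spatial variable $x\in X$ and read \eqref{general-characteristic} as a non-autonomous ODE on $\R^{r_2}$,
\[
\dot{\phi}=F_x(t,\phi),\qquad F_x(t,\phi):=V[\eta,\nu_{\cdot},h](t,x,\phi),
\]
and then invoke the classical local existence theorem together with the maximal-interval/blow-up alternative (cf.\ \cite{T12}). To do so it suffices to verify, for each fixed $x$, that $F_x$ is continuous in $t$ on $\cI$ and locally Lipschitz in $\phi$ locally uniformly in $t$; the standard theory then yields a maximal solution on an open interval $(T^{x,t_0}_{\min},T^{x,t_0}_{\max})$ together with the dichotomies (i)--(ii), and the Lipschitz property simultaneously delivers the uniqueness needed later to define the flow.

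First I would establish the local Lipschitz bound in $\phi$. Fix a compact neighbourhood $\mathcal N\subseteq\R^{r_2}$. Using \eqref{Eq-VOperator} and the triangle inequality,
\[
|F_x(t,\phi_1)-F_x(t,\phi_2)|\le\sum_{\ell=1}^r\int_{X^{k_\ell-1}}\int|g_\ell(t,\phi_1,\vec\psi)-g_\ell(t,\phi_2,\vec\psi)|\,\rd\bigl(\textstyle\bigotimes_{j}\nu_t^{y_j}\bigr)(\vec\psi)\,\rd\eta_\ell^x(\vec y)+|h(t,x,\phi_1)-h(t,x,\phi_2)|.
\]
By (\textbf{A5}) every $\psi_j$ ranges over the fixed compact set $E_{\nu_{\cdot}}$, so (\textbf{A2}) supplies a Lipschitz constant $L_\ell$ for $g_\ell$ in its first slot, valid on $\mathcal N\times E_{\nu_{\cdot}}^{k_\ell-1}$ uniformly in $t\in\cI$, and (\textbf{A3}) does the same for $h$ with constant $L_h$. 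The remaining masses are finite: $\nu_{\cdot}\in\mathcal{C}(\cI,\mathcal B_{*})$ from (\textbf{A5}) together with Proposition~\ref{prop-nu}(ii) gives $\sup_{t,y}\nu_t^y(\R^{r_2})\le\|\nu_{\cdot}\|<\infty$, while (\textbf{A4}) and boundedness give $\sup_x\eta_\ell^x(X^{k_\ell-1})\le\|\eta_\ell\|<\infty$. Hence
\[
|F_x(t,\phi_1)-F_x(t,\phi_2)|\le\Bigl(\sum_{\ell=1}^r L_\ell\,\|\nu_{\cdot}\|^{\,k_\ell-1}\,\|\eta_\ell\|+L_h\Bigr)|\phi_1-\phi_2|,
\]
the desired estimate, uniform in $(t,x)$.

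Next I would prove continuity of $t\mapsto F_x(t,\phi)$. Splitting $F_x(t,\phi)-F_x(s,\phi)$ into a part where only the time argument of $g_\ell(\cdot,\phi,\vec\psi)$ (and of $h$) changes, and a part where only the measures change, the first part is controlled by continuity of $g_\ell,h$ in $t$ from (\textbf{A2})--(\textbf{A3}) on the compact set $E_{\nu_{\cdot}}$ via dominated convergence. The second part requires $\bigotimes_{j}\nu_t^{y_j}\to\bigotimes_{j}\nu_s^{y_j}$ weakly as $t\to s$, uniformly in $\vec y$, so that the outer integral against the fixed finite measure $\eta_\ell^x$ passes to the limit; here I would use that (\textbf{A5}) forces uniform weak $t$-continuity of $\nu_{\cdot}$ (Proposition~\ref{prop-nu}(i)) together with the uniform compact support, noting that weak convergence of each factor, uniform in $y_j$, upgrades to weak convergence of the tensor products tested against the continuous, compactly supported $g_\ell$. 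I expect this tensor-product continuity, carried uniformly over $\vec y$ and then integrated against $\eta_\ell^x$, to be the main technical obstacle: it is the one place where the higher cardinality $k_\ell$ genuinely enters and where the digraph-measure argument of \cite{KX21} must be generalised to $\cM_+(X^{k_\ell-1})$. With continuity in $t$ and local Lipschitzness in $\phi$ in hand, the local theory yields the maximal solution and the alternatives (i)--(ii).

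Finally, for the global statement I would run a Nagumo-type invariance argument. Under (\textbf{A6}) the convex compact set $Y$ satisfies the subtangentiality condition $V[\eta,\nu_{\cdot},h](t,x,\phi)\cdot\upsilon(\phi)\le0$ for $\phi\in\partial Y$, which makes $Y$ invariant under $F_x$; consequently any solution issuing from $\phi\in Y$ remains in the bounded set $Y$ throughout $\cI$, so neither blow-up alternative (i-b) nor (ii-b) can occur and $(T^{x,t_0}_{\min},T^{x,t_0}_{\max})\cap\cI=\cI$ for all $x\in X$. Uniqueness from the Lipschitz bound then makes the time-$0$ solution map $\phi\mapsto\Phi^x_{t,0}[\eta,\nu_{\cdot},h](\phi)$ well defined on $Y$ for every $t\in\cI$, and differentiating the integral equation it solves gives the claimed fiberized flow identity together with $\Phi^x_{0,0}[\eta,\nu_{\cdot},h](\phi)=\phi$.
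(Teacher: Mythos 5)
Your proposal is correct and follows essentially the same route as the paper, which states this result without a detailed proof, invoking standard ODE theory \cite{T12}: your verification of the local Lipschitz bound in $\phi$ (your constant is exactly the paper's $L_1(\nu_{\cdot})=\BL(h)+\sum_{\ell}\BL(g_{\ell})\|\eta_{\ell}\|\|\nu_{\cdot}\|^{k_{\ell}-1}$ from the estimate \eqref{V-Lip} in Appendix~\ref{appendix-prop-continuousdependence}), the continuity in $t$ via the telescoping/tensor-product argument, and the Nagumo-type invariance of the convex compact set $Y$ under ($\mathbf{A6}$) to exclude the blow-up alternatives are precisely the ingredients the paper relies on. The tensor-product continuity step you flag as the main obstacle is handled by the same telescoping decomposition the paper uses in the proof of Proposition~\ref{prop-continuousdependence}(ii), with each term controlled by $d_{\infty}(\nu_t,\nu_s)$ uniformly in $\vec y$, so it poses no genuine difficulty.
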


\section{Well-posedness of the {Vlasov} equation}\label{sect-Vlasov}

From Theorem~\ref{theo-well-posedness-characteristic}, we have for all $x\in X$, $$\Phi^x_{t,0}[\eta,\nu_{\cdot},h])^{-1}=\Phi^x_{0,t}[\eta,\nu_{\cdot},h],\quad t\in\cI.$$ The pushforward under the flow $\Phi^x_{t,0}[\eta,\nu_{\cdot},h]$ of an initial measure $\nu_0^x\in\mathcal{B}_*(X,\cM_+(Y))$ defines another time-dependent measure in $\mathcal{B}_*(X,\cM_+(Y))$ via the following \emph{fixed point equation} \begin{equation}\label{Fixed}
  \nu_t^x=(\mathcal{A}[\eta,h]\nu)_t^x,\quad t\in\cI.
\end{equation} where {$(\mathcal{A}[\eta,h]\nu)_t^x\coloneqq\Phi^x_{0,t}[\eta,\nu_{\cdot},h]_{\sharp}\nu_0^x$ denotes the push-forwards of $\nu_0^x$ along $\Phi^x_{0,t}[\eta,\nu_{\cdot},h]$.} In particular, if $\nu_{\cdot}\in \mathcal{C}(\cI,\mathcal{B}_*(X,\cM_{+,\abs}(Y)))$, then $\cA[\eta,h]\nu_{\cdot}\in \mathcal{C}(\cI,\mathcal{B}_*(X,\cM_{+,\abs}(Y)))$ by the positive invariance of $Y$. Hence the \emph{Vlasov operator} can be represented in terms of the density $\rho(t,y,\phi)\coloneqq\frac{\rd\nu_t^y(\phi)}{\rd\mu_X(y)\rd\phi}$ for every $t\in\cI$: \begin{equation}\label{Vrho-0}
\begin{split}
\widehat{V}[\eta,\rho(\cdot),h](t,x,\phi)=&
\sum_{\ell\in J}\int_{X^{\ell-1}}\underbrace{\int_{Y}\cdots\int_{Y}}_{\ell-1}
g_{\ell}(t,\phi,\psi_1,\psi_2,\ldots,\psi_{\ell-1})\prod_{\ell=1}^{\ell-1}
\rho(t,y_{\ell},\psi_{\ell})\rd\psi_{1}\cdots\rd\psi_{\ell-1}\\
&\rd\eta^{x}_{\ell}(y_1,\ldots,y_{\ell-1})+h(t,x,\phi).
    \end{split}
\end{equation}

Let ${\sf L}^1(X\times Y;\mu_X\otimes\lambda)$ be the space of all integrable functions w.r.t. the reference measure $\mu_X\otimes\lambda$. Let \[\begin{split}{\sf L}^1_+(X\times Y;\mu_X\otimes\lambda)=&\Bigl\{f\in {\sf L}^1(X\times Y;\mu_X\otimes\lambda)\colon \int_{X\times Y}f\rd\mu_X\rd\lambda=1,\\ &\hspace{4.2cm} f\ge0,\ \mu_X\otimes\lambda\ \text{a.e. on}\ X\times Y\Bigr\},\end{split}\] be the space of densities of probabilities on $X\times Y$. Conversely, for every function $\rho\colon\mathcal{I}\to{\sf L}^1_+(X\times Y;\mu_X\otimes\lambda)$, for $(t,y)\in\mathcal{I}\times X$, $$\rd\nu_t^y(\phi)=\rho(t,y,\phi)\rd\phi$$ defines $\nu_{\cdot}\in\mathcal{B}(\mathcal{I},\mathcal{B}_{*}(X,\cM_+(Y)))$. Hence \eqref{Vrho-0} can be transformed to the Vlasov operator \eqref{Eq-VOperator} in terms of $\nu_{\cdot}$.

Let $\rho_0\colon X\times Y\to\R_+$ be continuous in $x$ for $\lambda$-a.e. $\phi\in Y$, and integrable in $\phi$ for every $x\in X$ such that $$\int_X\int_Y\rho_0(x,\phi)\rd\phi\rd\mu_X(x)=1.$$  Consider the VE
\begin{align}
 \label{Vlasov} &\frac{\partial\rho(t,x,\phi)}{\partial t}+\textrm{div}_{\phi}\left(\rho(t,x,\phi)\widehat{V}[\eta,\rho(\cdot),h](t,x,\phi)\right)=0, t\in(0,T],\ x\in X,\ \lambda\text{-a.e.}\ \phi\in Y,\\
\nonumber  &\rho(0,\cdot)=\rho_0(\cdot).
  \end{align}
  First, let us define the weak solution to  \eqref{Vlasov}. This definition is adapted from \cite[Definition~4.6]{KX21}.

\begin{definition}\label{def-weak-sol}
Let $Y$ be a compact positively invariant subset of \eqref{general-characteristic} given in Theorem~\ref{theo-well-posedness-characteristic}. We say $\rho\colon\cI\times X\times Y\to\R^{+} $ is a \emph{uniformly weak solution} to the IVP \eqref{Vlasov} if for every $x\in X$, the following three conditions are satisfied:
\medskip

\noindent(i) Normalization. $\int_X\int_Y\rho(t,x,\phi)\rd\phi\rd x=1$, for all $t\in\cI$.
\medskip

  \noindent(ii) Uniform weak continuity. The map $t\mapsto\int_Yf(\phi)\rho(t,x,\phi)\rd\phi$ is continuous uniformly in $x\in X$, for every $f\in \mathcal{C}(Y)$.
\medskip

  \noindent(iii) Integral identity: For all test functions $w\in \mathcal{C}^1(\cI\times Y)$ with $\supp w\subseteq [0,T[\times U$ and $U\subset\subset Y$, the equation below holds: \begin{multline}\label{eq-test}
    \int_0^T\int_Y\rho(t,x,\phi)\lt(\frac{\partial w(t,\phi)}{\partial t}+\widehat{V}[\eta,\rho(\cdot),h](t,x,\phi)\cdot\nabla_{\phi} w(t,\phi)\rt)\rd\phi\rd t\\
    +\int_Yw(0,\phi)\rho_0(x,\phi)\rd\phi=0,
  \end{multline}
  where $\supp w=\overline{\{(t,u)\in \cI\times Y\colon w(t,u)\neq0\}}$ is the support of $w$, and $\widehat{V}[\eta,\rho(\cdot),h]$ is given in \eqref{Vrho-0}.
\end{definition}

The well-posedness of the VE associated with the generalized IPS depends on continuity properties of the operator $\cA$. We comment that Definition~\ref{def-weak-sol} is well-posed, c.f. \cite[Remark~4.7]{KX21}.

We first provide the continuity property of $\cA$.

\begin{proposition}\label{prop-continuousdependence}
Assume $\mathbf{(A1)}$-$\mathbf{(A6)}$. Denote $\eta=\{\eta_{\ell}\}_{\ell\in J}$.
  \begin{enumerate}
    \item[(i)] Continuity in $t$. $$t\mapsto(\cA[\eta,h]\nu_{t}\in \mathcal{C}(\cI,\mathcal{B}_{*}(X,\cM_+(Y))).$$ In particular,
    if $\nu_{\cdot}\in\mathcal{C}(\cI,$ $\mathcal{C}_{*}(X,\cM_+(Y)))$, then $\cA[\eta,h]\nu_{\cdot}$ $\in \mathcal{C}(\cI,\mathcal{C}_{*}(X,$ $\cM_+(Y)))$.
    Moreover, the mass conservation law holds: $$\cA[\eta,h]\nu_t^x(Y)=\nu_0^x(Y),\quad \forall x\in X.$$
        \item[(ii)] Lipschitz continuity in $\nu_{\cdot}$. For $\nu^1,\nu^2\in{\mathcal{C}_{*}(X,\cM_+(Y)))}$, we have \[d_{\infty}(\cA[\eta,h]\nu^1_{t},\cA[\eta,h]\nu^2_{t})
            \le\textnormal{e}^{L_1t}d_{\infty}(\nu^1_0,\nu^2_0)+ L_2\textnormal{e}^{L_1t}\int_0^td_{\infty}(\nu^1_{\tau},\nu^2_{\tau})
  \textnormal{e}^{-L_1\tau}\rd\tau,\]where $L_1\coloneqq L_1(\nu^2_{\cdot})$ and $L_2\coloneqq L_2(\nu^1_{\cdot},\nu^2_{\cdot})$ are constants.
  \item[(iii)] Lipschitz continuity of $\cA[\eta,h]$ in $h$. For $h_1,\ h_2$ satisfying $\mathbf{(A3)}$ and $\mathbf{(A7)}$ with $h$ replaced by $h_i$ for $i=1,2$,
  \[d_{\infty}(\cA[\eta,h_1]\nu_{t},\cA[\eta,h_2]\nu_{t})\le
      L_3\|h_1-h_2\|_{\infty,\cI},\]
      where $L_3\coloneqq L_3(\nu_{\cdot})$ is a constant.
  \item[(iv)] Absolute continuity. If $\nu_0\in \mathcal{B}_{*}(X,\cM_{+,\abs}(Y))$, then $$\cA[\eta,h]\nu_t\in \mathcal{B}_{*}(X,\cM_{+,\abs}(Y)),\quad \forall t\in\cI.$$
  \end{enumerate}
\end{proposition}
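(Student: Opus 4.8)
The plan is to base everything on the push-forward representation coming from \eqref{Fixed}, namely, writing $\nu^{i,x}_t=(\nu^i_t)^x$ and abbreviating $\cA=\cA[\eta,h]$,
\[
(\cA\nu)_t^x(f)=\int_Y f\bigl(\Phi^x_{t,0}[\eta,\nu_{\cdot},h](\phi)\bigr)\,\rd\nu_0^x(\phi),\qquad f\in\cC_{\sf b}(Y),
\]
so that all four assertions reduce to quantitative regularity of the characteristic flow $\Phi^x_{t,0}$ supplied by Theorem~\ref{theo-well-posedness-characteristic}. The first step is to record the uniform bounds that make these estimates work: since $Y$ is compact and each $\nu_t^x$ has uniformly bounded mass, assumptions $\mathbf{(A2)}$--$\mathbf{(A4)}$ yield a constant $M=M(\eta,Y)$ with $\sup_{t\in\cI}\sup_{x\in X}\sup_{\phi\in Y}|V[\eta,\nu_{\cdot},h](t,x,\phi)|\le M$ and a Lipschitz constant $L=L(\eta,Y)$ for $\phi\mapsto V[\eta,\nu_{\cdot},h](t,x,\phi)$, uniformly in $(t,x)$; here one uses that the locally Lipschitz $g_\ell$ are globally Lipschitz on the compact product $Y^{k_\ell}$ and that $\sup_x\eta_\ell^x(X^{k_\ell-1})=\|\eta_\ell\|<\infty$. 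From these, Gr\"onwall's inequality gives that $\Phi^x_{t,0}$ is Lipschitz in $\phi$ with constant $\e^{Lt}$, Lipschitz in $t$ with constant $M$, and a bijection of $Y$ with inverse $\Phi^x_{0,t}$.

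For part (i) I would verify uniform weak $t$-continuity and invoke Proposition~\ref{prop-nu}(i). Fixing $f\in\cC_{\sf b}(Y)$, compactness of $Y$ makes $f$ uniformly continuous, and combining this with the uniform bound $|\Phi^x_{t,0}(\phi)-\Phi^x_{s,0}(\phi)|\le M|t-s|$ shows $t\mapsto(\cA\nu)_t^x(f)$ is continuous uniformly in $x$, hence continuity in $\cC(\cI,\cB_*(X,\cM_+(Y)))$ by Proposition~\ref{prop-nu}(i). Mass conservation $(\cA\nu)_t^x(Y)=\nu_0^x(Y)$ is immediate from the push-forward (take $f\equiv1$), and normalization follows by integrating in $x$ against $\mu_X$. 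The continuity-preserving statement needs in addition continuity of $x\mapsto\Phi^x_{t,0}(\phi)$, which follows from a second Gr\"onwall argument once $x\mapsto V[\eta,\nu_{\cdot},h](t,x,\phi)$ is continuous; this is available when $\nu_{\cdot}\in\cC_*$ together with the continuity of $x\mapsto\eta^x_\ell$ (i.e.\ $\mathbf{(A4)'}$) and $\mathbf{(A7)}$.

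Parts (ii) and (iii) are the core estimates and follow the same template. For (ii) I would fix $f\in\mathcal{BL}_1(Y)$ and write, for the two flows $\Phi^{i,x}:=\Phi^x_{t,0}[\eta,\nu^i_{\cdot},h]$ (suppressing the fixed $t$),
\[
\int_Y f\,\rd\bigl((\cA\nu^1)_t^x-(\cA\nu^2)_t^x\bigr)
=\int_Y\bigl(f(\Phi^{1,x}\phi)-f(\Phi^{2,x}\phi)\bigr)\rd\nu^{1,x}_0(\phi)
+\int_Y f(\Phi^{2,x}\phi)\,\rd\bigl(\nu^{1,x}_0-\nu^{2,x}_0\bigr)(\phi).
\]
Since $f\circ\Phi^{2,x}\in\mathcal{BL}(Y)$ with $\mathcal{BL}$-norm $\le1+\e^{Lt}$, the second term is $\le(1+\e^{Lt})\,d_\infty(\nu^1_0,\nu^2_0)$, producing the $\e^{L_1t}d_\infty(\nu^1_0,\nu^2_0)$ contribution after absorbing constants into $L_1$; the first term is bounded by $\sup_\phi|\Phi^{1,x}\phi-\Phi^{2,x}\phi|$, which Gr\"onwall controls through $\sup_{\tau,x,\phi}|V[\eta,\nu^1_{\cdot},h]-V[\eta,\nu^2_{\cdot},h]|$, giving the integral term. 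Part (iii) is analogous but simpler: the two vector fields differ only through $h_1-h_2$, so $|V[\eta,\nu_{\cdot},h_1]-V[\eta,\nu_{\cdot},h_2]|\le\|h_1-h_2\|_{\infty,\cI}$ and Gr\"onwall produces the factor $L_3$. For part (iv), absolute continuity is preserved because $\Phi^x_{t,0}$ is a bi-Lipschitz bijection of $Y$; the change-of-variables formula expresses the density of $(\cA\nu)_t^x$ through that of $\nu_0^x$ and the bounded, bounded-away-from-zero Jacobian of $\Phi^x_{0,t}$, keeping it in $\sL^1$.

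The main obstacle, and the place where the higher-order structure genuinely enters, is estimating $|V[\eta,\nu^1_{\cdot},h]-V[\eta,\nu^2_{\cdot},h]|$ in part (ii): the operator \eqref{Eq-VOperator} depends on $\nu_{\cdot}$ \emph{multilinearly}, through the $(k_\ell-1)$-fold product $\rd\nu_\tau^{y_{k_\ell-1}}\cdots\rd\nu_\tau^{y_1}$. Unlike the digraph-measure case of \cite{KX21}, where this dependence is linear, here one must telescope the product difference as $\sum_{m=1}^{k_\ell-1}\bigl(\otimes_{j<m}\nu^{1,y_j}_\tau\bigr)\otimes\bigl(\nu^{1,y_m}_\tau-\nu^{2,y_m}_\tau\bigr)\otimes\bigl(\otimes_{j>m}\nu^{2,y_j}_\tau\bigr)$, bounding each summand by one factor $d_\infty(\nu^1_\tau,\nu^2_\tau)$ (paired against the slicewise Lipschitz estimate of $g_\ell$ in $\psi_m$) and absorbing the remaining $k_\ell-2$ factors into the mass bound $\|\nu^i_{\cdot}\|^{k_\ell-2}$. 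Carrying this telescoping through uniformly in $x$, and verifying that $g_\ell$ restricted to the compact domain $Y^{k_\ell}$ supplies the requisite slicewise Lipschitz constants, is the technical heart of the argument and fixes the constant $L_2=L_2(\nu^1_{\cdot},\nu^2_{\cdot})$.
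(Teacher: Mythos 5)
Your proposal is correct and follows essentially the same route as the paper's proof: the push-forward representation combined with Gr\"onwall estimates on the fiberized flow, the two-term decomposition of $d_{\sf BL}\bigl(\Phi[\nu^1]_{\#}\nu_0^{1,x},\Phi[\nu^2]_{\#}\nu_0^{2,x}\bigr)$ for part (ii), and, crucially, the same telescoping of the $(k_\ell-1)$-fold product measures that the paper uses to obtain the constant $L_2$, as well as the same treatment of (iii) via the difference of vector fields and of (iv) via preservation of null sets under the (bi-Lipschitz) flow. The only deviations are cosmetic (invoking Proposition~\ref{prop-nu}(i) instead of estimating $d_\infty$ directly in (i), and slightly looser constants such as $1+\textnormal{e}^{Lt}$ in place of $\textnormal{e}^{L_1t}$), and you correctly note that the $\mathcal{C}_*$-preservation clause additionally requires $\mathbf{(A4)'}$ and $\mathbf{(A7)}$, consistent with where the paper actually proves that step.
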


The proof of Proposition~\ref{prop-continuousdependence} is provided in Appendix~\ref{appendix-prop-continuousdependence}.

\begin{proposition}\label{prop-sol-fixedpoint}
Assume $\mathbf{(A1)}$-$\mathbf{(A4)}$ and $\mathbf{(A6)}$-$\mathbf{(A7)}$. Let $\nu_0\in \mathcal{B}_{*}(X,\cM_+(Y))$, and $L_1$, $L_2$, and $L_3$ be defined as in Proposition~\ref{prop-continuousdependence}. Then there exists a unique solution $\nu_{\cdot}\in\mathcal{C}(\cI,\mathcal{B}_{*}(X,\cM_+(Y)))$ to the fixed point equation
\eqref{Fixed}. In addition, if  $\mathbf{(A4)'}$ holds and $\nu_0\in \mathcal{C}_{*}(X,\cM_+(Y))$, then $\nu_{\cdot}\in\mathcal{C}(\cI,\mathcal{C}_{*}(X,$ $\cM_+(Y)))$; if $\nu_0\in \mathcal{B}_{*}(X,\cM_{+,\abs}(Y))$, then $$\nu_{t}\in \mathcal{B}_{*}(X,\cM_{+,\abs}(Y)),\quad \forall t\in\cI.$$
 Moreover, the solutions have continuous dependence on
\begin{enumerate}
\item[\textnormal{(i)}] the initial conditions:
\[d_{\infty}(\nu_t^1,\nu_t^2)\le \textnormal{e}^{(L_1(\nu^2_{\cdot})+L_2\|\nu^1_{\cdot}\|)t}d_{\infty}(\nu_0^1,\nu_0^2),\quad t\in\cI,\]
where $\nu^i_{\cdot}$ is the solution to \eqref{Fixed} with initial condition $\nu^i_0$ for $i=1,2$.
\item[\textnormal{(ii)}] $h$: {Assume $\nu_0^1=\nu_0^2$. Then} $$d_{\infty}(\nu_t^1,\nu_t^2)\le \frac{1}{L_3(\nu^2_{\cdot})}\|\nu^1_{\cdot}\|
    \textnormal{e}^{\BL(h_2)\textnormal{e}^{L_1(\nu^2_{\cdot})T}T}
    \textnormal{e}^{(L_1(\nu^2_{\cdot})+L_2\|\nu^1_{\cdot}\|)t}\|h_1-h_2\|_{\infty},$$
where $\nu^i_{\cdot}$ is the solution to \eqref{Fixed} with  $h$ replaced by $h_i$ for $i=1,2$.
\item[\textnormal{(iii)}] $\eta$: Let $\{\eta^\ell\}_{K\in\N}\subseteq \mathcal{B}(X,\cM_+(X^{\ell-1}))$ such that $\lim_{K\to\infty}d_{\infty}(\eta_{\ell},\eta^{K}_{\ell})=0$, for $\ell\in J$. Assume $\nu_0,\ \nu^K_0\in \mathcal{C}_{*}(X,\cM_+(Y))$ with $$\lim_{K\to\infty}d_{\infty}(\nu_0,\nu^K_0)=0.$$ Then
    $$\lim_{K\to\infty}\sup_{t\in\cI}d_{\infty}(\nu_t,\nu^K_t)=0,$$
where $\nu^K_{\cdot}$ is the solution to \eqref{Fixed} with $\eta_{\ell}$ replaced by $\eta_{\ell}^K$ for $\ell\in J$ and $K\in\N$.
\end{enumerate}
\end{proposition}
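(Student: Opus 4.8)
The plan is to obtain existence and uniqueness by a contraction argument on the complete metric space $(\mathcal{C}(\cI,\mathcal{B}_{*}(X,\cM_+(Y))),d_{\alpha})$, and then to deduce every continuous-dependence statement from the two Lipschitz estimates in Proposition~\ref{prop-continuousdependence} together with Grönwall's inequality; the only genuinely new work lies in part (iii).

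First I would fix the datum $\nu_0$ and restrict $\cA[\eta,h]$ to the closed (hence complete) set of curves $\nu_{\cdot}$ with $\nu_{\cdot}|_{t=0}=\nu_0$. Since $Y$ is compact and, by the mass-conservation law of Proposition~\ref{prop-continuousdependence}(i), every iterate has fibrewise mass bounded by $\|\nu_0\|$, the constants $L_1,L_2$ of Proposition~\ref{prop-continuousdependence}(ii) may be taken uniform over these curves. On this set the first term of (ii) drops, and inserting $d_{\infty}(\nu^1_\tau,\nu^2_\tau)\le \e^{\alpha\tau}d_{\alpha}(\nu^1_{\cdot},\nu^2_{\cdot})$ into the integral yields, for $\alpha>L_1$,
\[d_{\alpha}(\cA[\eta,h]\nu^1_{\cdot},\cA[\eta,h]\nu^2_{\cdot})\le\frac{L_2}{\alpha-L_1}\,d_{\alpha}(\nu^1_{\cdot},\nu^2_{\cdot}).\]
Choosing $\alpha>L_1+L_2$ makes $\cA[\eta,h]$ a contraction, so Banach's theorem gives a unique $\nu_{\cdot}$, which automatically satisfies $\nu_{\cdot}|_{t=0}=\nu_0$ because $\Phi^x_{0,0}=\mathrm{id}$. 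The regularity assertions follow since $\cA[\eta,h]$ leaves invariant the closed subspaces $\mathcal{C}(\cI,\mathcal{C}_{*}(X,\cM_+(Y)))$ (under $\mathbf{(A4)'}$, by Proposition~\ref{prop-continuousdependence}(i)) and $\mathcal{B}_{*}(X,\cM_{+,\abs}(Y))$ (by Proposition~\ref{prop-continuousdependence}(iv)); being complete, they must contain the unique fixed point.

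For the continuous dependence on the initial condition and on $h$, I would view two fixed points $\nu^1_{\cdot},\nu^2_{\cdot}$ as outputs of $\cA$. For (i), both solve the same equation, so Proposition~\ref{prop-continuousdependence}(ii) gives an integral inequality for $u(t)=\e^{-L_1 t}d_{\infty}(\nu^1_t,\nu^2_t)$ that Grönwall turns into the stated bound $\e^{(L_1(\nu^2_\cdot)+L_2\|\nu^1_\cdot\|)t}d_{\infty}(\nu^1_0,\nu^2_0)$. For (ii), with a common datum I would insert the intermediate term $\cA[\eta,h_2]\nu^1_t$, bound $d_{\infty}(\cA[\eta,h_1]\nu^1_t,\cA[\eta,h_2]\nu^1_t)$ by Proposition~\ref{prop-continuousdependence}(iii) and $d_{\infty}(\cA[\eta,h_2]\nu^1_t,\cA[\eta,h_2]\nu^2_t)$ by (ii), and again close with Grönwall; the prefactors $\tfrac{1}{L_3}\|\nu^1_\cdot\|$ and the exponentials are exactly what the constants in Proposition~\ref{prop-continuousdependence} produce.

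The heart of the proof — and the step I expect to be the main obstacle — is part (iii), because Proposition~\ref{prop-continuousdependence} contains no Lipschitz estimate of $\cA$ in its first argument, so continuity in $\eta$ must be built from scratch. I would first establish continuity of the Vlasov operator in $\eta$: for the limit solution $\nu_{\cdot}\in\mathcal{C}(\cI,\mathcal{C}_{*})$ each integrand $\Psi_\ell(y_1,\dots,y_{k_\ell-1})=\int_{Y^{k_\ell-1}}g_\ell(t,\phi,\psi_1,\dots,\psi_{k_\ell-1})\,\rd\nu_t^{y_{k_\ell-1}}(\psi_{k_\ell-1})\cdots\rd\nu_t^{y_1}(\psi_1)$ is continuous on the compact $X^{k_\ell-1}$; approximating each $\Psi_\ell$ uniformly by a bounded Lipschitz function and invoking the uniform weak convergence $d_{\infty}(\eta_\ell,\eta^K_\ell)\to0$ with the uniform mass bounds — exactly as in the proof of Proposition~\ref{prop-nu}(i) — yields $\delta_K:=\sup_{t,x,\phi}|V[\eta,\nu_{\cdot},h]-V[\eta^K,\nu_{\cdot},h]|\to0$. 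A Grönwall estimate on the characteristic ODE then bounds the difference of the two flows by $C\delta_K$, and since both push-forwards act on the same $\nu_0^x$, the inequality $d_{\sf BL}(T_1{}_\sharp\mu,T_2{}_\sharp\mu)\le\|T_1-T_2\|_\infty\,\mu(Y)$ gives $\sup_t d_{\infty}(\cA[\eta,h]\nu_t,\cA[\eta^K,h]\nu_t)\le C\delta_K$. Finally I would write $d_{\infty}(\nu_t,\nu^K_t)\le d_{\infty}(\cA[\eta,h]\nu_t,\cA[\eta^K,h]\nu_t)+d_{\infty}(\cA[\eta^K,h]\nu_t,\cA[\eta^K,h]\nu^K_t)$, bound the second summand by Proposition~\ref{prop-continuousdependence}(ii) applied to $\cA[\eta^K,h]$ (constants uniform in $K$), and close with Grönwall to get $\sup_{t\in\cI}d_{\infty}(\nu_t,\nu^K_t)\le C'(\delta_K+d_{\infty}(\nu_0,\nu^K_0))\to0$. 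The delicate point is that $\Psi_\ell$ is merely continuous, not Lipschitz, so the $\eta$-dependence cannot be bounded directly by $d_{\infty}(\eta_\ell,\eta^K_\ell)$; the Lipschitz approximation of $\Psi_\ell$ circumvents this, and it is precisely why vertex-continuity of $\nu_{\cdot}$ (hence $\mathbf{(A4)'}$ and $\nu_0\in\mathcal{C}_{*}$) is indispensable here.
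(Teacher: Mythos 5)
Your proposal is correct and, in its skeleton, coincides with the paper's proof: existence and uniqueness via Banach's fixed point theorem for $\cA[\eta,h]$ on $(\mathcal{C}(\cI,\mathcal{B}_{*}(X,\cM_+(Y))),d_{\alpha})$ with $\alpha$ large; the regularity assertions from invariance of the closed subspaces $\mathcal{C}(\cI,\mathcal{C}_{*}(X,\cM_+(Y)))$ and $\mathcal{B}_{*}(X,\cM_{+,\abs}(Y))$; parts (i) and (ii) from Proposition~\ref{prop-continuousdependence}(ii)--(iii) plus Gronwall; and, for (iii), the very same triangle-inequality decomposition into a ``same curve $\nu_{\cdot}$, different DHGMs'' term and a ``same DHGM $\eta^K$, different curves'' term, closed by Gronwall. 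The one place you genuinely deviate is the limiting argument inside (iii). You claim the \emph{uniform} bound $\delta_K=\sup_{t,x,\phi}\lvert V[\eta,\nu_{\cdot},h]-V[\eta^K,\nu_{\cdot},h]\rvert\to0$, whereas the paper proves only that, for each fixed $(\tau,\varphi)$, $\sup_{x\in X}\lvert V[\eta,\nu_{\cdot}](\tau,x,\varphi)-V[\eta^K,\nu_{\cdot}](\tau,x,\varphi)\rvert\to0$, and then closes the estimate with Fatou's lemma and dominated convergence against the envelope measure $\widehat{\nu}_{\tau}=\sup_{x\in X}\nu^x_{\tau}$ (this is exactly why the quantity $\zeta^K_x$ appears there). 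Your stronger route buys a cleaner final Gronwall step; the paper's route avoids any uniformity-in-$(t,\phi)$ discussion at the price of introducing $\widehat{\nu}_{\tau}$ and a dominated-convergence layer.

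Two points in your write-up should be made explicit, though neither is a fatal gap. First, for $\delta_K\to0$ you must approximate the whole family $\{\Psi^{t,\phi}_{\ell}\}_{(t,\phi)\in\cI\times Y}$ by Lipschitz functions with a \emph{common} $\BL$ bound and a \emph{common} error; the single-function approximation used in the proof of Proposition~\ref{prop-nu}(i) does not by itself give constants uniform over the family. What saves you is that this family is uniformly bounded and equicontinuous on $X^{k_{\ell}-1}$: joint continuity of $(t,y)\mapsto\nu^y_t$ on the compact set $\cI\times X$ (available because $\nu_{\cdot}\in\mathcal{C}(\cI,\mathcal{C}_{*}(X,\cM_+(Y)))$) yields a modulus of continuity independent of $(t,\phi)$, and the Lipschitz constant of $g_{\ell}$ restricted to $Y$ is uniform in $(t,\phi)$ by $(\mathbf{A2})$; this is also where $\mathbf{(A4)'}$ and the vertex continuity of $\nu_{\cdot}$ enter, as you correctly note. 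Second, in the contraction argument the constants $L_1,L_2$ depend on the masses of the \emph{input} curves, so restricting $\cA[\eta,h]$ to curves with the correct initial value is not quite enough; restrict instead to the closed set of mass-conserving curves $\{\nu_{\cdot}\colon \nu_t^x(Y)=\nu_0^x(Y)\ \text{for all}\ t\in\cI,\ x\in X\}$, which is invariant under $\cA[\eta,h]$ by the conservation law of Proposition~\ref{prop-continuousdependence}(i) and on which the constants are uniform; your estimate $d_{\alpha}(\cA\nu^1_{\cdot},\cA\nu^2_{\cdot})\le \frac{L_2}{\alpha-L_1}\,d_{\alpha}(\nu^1_{\cdot},\nu^2_{\cdot})$ then goes through verbatim.
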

The proof of Proposition~\ref{prop-sol-fixedpoint} is provided in Appendix~\ref{appendix-prop-sol-fixedpoint}.

With the above assumptions and under appropriate metrics, one can show that the operator defined in \eqref{Fixed} $$\mathcal{A}=(\mathcal{A}^x)_{x\in X}\colon \mathcal{C}(\cI,\mathcal{B}_{*}(X,\mathcal{M}_+(Y)))\to  \mathcal{C}(\cI,\mathcal{B}_{*}(X,\mathcal{M}_+(Y)))$$ is a contraction.
Now we obtain the well-posedness of the VE \eqref{Vlasov}.

\begin{theorem}\label{thm-well-posedness}
Assume ($\mathbf{A1}$)-($\mathbf{A4}$) and ($\mathbf{A6}$). Let $\rho_0\in {\sf L}^1_+(X\times Y;\mu_X\otimes\lambda)$. Assume additionally that $\rho_0(x,\phi)$ is continuous in $x\in X$ for $\lambda$-a.e. $\phi\in Y$. Then there exists a unique uniform weak solution to the IVP of \eqref{Vlasov} with initial condition $\rho(0,x,\phi)=\rho_0(x,\phi)$, $x\in X$, $\phi\in Y$.
\end{theorem}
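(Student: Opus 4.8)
The plan is to construct the unique uniform weak solution as the (absolutely continuous) density of the measure-valued function $\nu_{\cdot}$ produced by the fixed point equation \eqref{Fixed}, thereby reducing Theorem~\ref{thm-well-posedness} to Proposition~\ref{prop-sol-fixedpoint} together with the method of characteristics. First I would convert the initial density into an initial measure by setting $\rd\nu_0^x(\phi)=\rho_0(x,\phi)\rd\phi$; since $\rho_0\in\sL^1_+(X\times Y;\mu_X\otimes\fm)$ is continuous in $x$ for $\fm$-a.e.\ $\phi$, this defines $\nu_0\in\mathcal{B}_{*}(X,\cM_{+,\abs}(Y))$. Applying Proposition~\ref{prop-sol-fixedpoint} yields a unique $\nu_{\cdot}\in\mathcal{C}(\cI,\mathcal{B}_{*}(X,\cM_+(Y)))$ solving \eqref{Fixed}, and because $\nu_0$ is absolutely continuous the same proposition gives $\nu_t\in\mathcal{B}_{*}(X,\cM_{+,\abs}(Y))$ for every $t$. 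Hence I may write $\rd\nu_t^x(\phi)=\rho(t,x,\phi)\rd\phi$ and take this $\rho$ as the candidate solution.

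Next I would verify the three requirements of Definition~\ref{def-weak-sol}. Normalization follows from the mass-conservation law $\nu_t^x(Y)=\nu_0^x(Y)$ of Proposition~\ref{prop-continuousdependence}(i) and $\nu_0\in\mathcal{B}_{*}$. Uniform weak continuity is immediate from $\nu_{\cdot}\in\mathcal{C}(\cI,\mathcal{B}_{*}(X,\cM_+(Y)))$ via Proposition~\ref{prop-nu}(i), since $t\mapsto\int_Y f(\phi)\rho(t,x,\phi)\rd\phi=\nu_t^x(f)$. The substantive point is the integral identity \eqref{eq-test}, which I would obtain from the characteristic representation of $\nu_t^x$ as the push-forward of $\nu_0^x$ along the flow $\Phi^x_{t,0}[\eta,\nu_{\cdot},h]$ of Theorem~\ref{theo-well-posedness-characteristic}. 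Fixing $x$ and a test function $w\in\mathcal{C}^1(\cI\times Y)$ with $\supp w\subseteq[0,T[\times U$, I set $G(t)=\int_Y w(t,\phi)\rd\nu_t^x(\phi)=\int_Y w(t,\Phi^x_{t,0}(\phi))\rd\nu_0^x(\phi)$ and differentiate under the integral, which is legitimate because the flow is $C^1$ in the phase variable and $w$ is compactly supported. Using the characteristic ODE $\tfrac{\rd}{\rd t}\Phi^x_{t,0}(\phi)=V[\eta,\nu_{\cdot},h](t,x,\Phi^x_{t,0}(\phi))$ and the fact that $V$ in \eqref{Eq-VOperator} agrees with $\widehat{V}$ in \eqref{Vrho-0} once $\nu_t^x$ has density $\rho$, a change of variables back to $\nu_t^x$ gives $G'(t)=\int_Y(\partial_t w+\widehat{V}\cdot\nabla_{\phi}w)\rd\nu_t^x$. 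Integrating over $[0,T]$ and using $w(T,\cdot)=0$ together with $G(0)=\int_Y w(0,\phi)\rho_0(x,\phi)\rd\phi$ produces exactly \eqref{eq-test}.

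For uniqueness I would argue that every uniform weak solution must itself be a fixed point of $\cA[\eta,h]$. Given a weak solution $\rho$, define $\nu_t^x$ by $\rd\nu_t^x(\phi)=\rho(t,x,\phi)\rd\phi$; then \eqref{eq-test} states precisely that, for each fixed $x$, $\nu_{\cdot}^x$ is a measure-valued weak solution of the linear continuity equation $\partial_t\nu+\textrm{div}_{\phi}(b\,\nu)=0$ with the time-dependent velocity field $b(t,\phi)=\widehat{V}[\eta,\rho(\cdot),h](t,x,\phi)$, which is Lipschitz in $\phi$ uniformly in $t$ by $\mathbf{(A2)}$--$\mathbf{(A3)}$ and $\mathbf{(A6)}$. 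Uniqueness of measure solutions to the continuity equation with such a velocity forces $\nu_t^x=(\Phi^x_{t,0})_{\sharp}\nu_0^x$, where $\Phi$ is the flow of $b$; but this flow is exactly the characteristic flow of Theorem~\ref{theo-well-posedness-characteristic} associated with the same $\nu_{\cdot}$, so $\nu_{\cdot}$ solves \eqref{Fixed}. Proposition~\ref{prop-sol-fixedpoint} then forces $\nu_{\cdot}$ to be the unique fixed point, whence $\rho$ is determined $\mu_X\otimes\fm$-a.e.

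I expect the main obstacle to be the uniqueness direction, namely rigorously closing the loop between the weak formulation \eqref{eq-test} and the push-forward formulation \eqref{Fixed}. The delicate feature is that the transport velocity $\widehat{V}[\eta,\rho(\cdot),h]$ is nonlocal and depends self-consistently on the unknown $\rho$; to invoke uniqueness for the linear continuity equation I must first freeze the coefficient along a candidate solution, verify the required Lipschitz-in-$\phi$ regularity (uniformly in $t$ and $x$) of $\widehat{V}$ on the compact positively invariant set $Y$, and only then transfer uniqueness back to the nonlinear problem through the contraction already established in Proposition~\ref{prop-sol-fixedpoint}. The remaining technical care lies in justifying differentiation under the integral sign and the change of variables in the existence part, both of which rest on the $C^1$-dependence of the characteristic flow on the initial phase variable.
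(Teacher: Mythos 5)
Your proposal is correct and takes essentially the same route as the paper: the paper's proof simply defers to the corresponding result in \cite{KX21} (Theorem~4.8 there), whose argument is precisely the correspondence you reconstruct\,---\,existence by taking the density of the unique fixed point of $\cA[\eta,h]$ obtained from Proposition~\ref{prop-sol-fixedpoint} (push-forward along the characteristic flow, with absolute continuity and mass conservation from Proposition~\ref{prop-continuousdependence}), and uniqueness by freezing the nonlocal velocity $\widehat{V}$ along a given uniformly weak solution, invoking uniqueness for the resulting linear continuity equation with a velocity field Lipschitz in $\phi$, and concluding that the solution must coincide with that fixed point. No gap: both directions rest exactly on the continuous-dependence machinery of Propositions~\ref{prop-continuousdependence} and \ref{prop-sol-fixedpoint}, as in your write-up.
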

\begin{proof}
  The proof is the same as that of \cite[Theorem~4.8]{KX21}, which is independent of the specific form of $V$ and $\widehat{V}$, but based on the continuous dependence properties given in Proposition~\ref{prop-continuousdependence} and Proposition~\ref{prop-sol-fixedpoint}.
\end{proof}

\section{Approximation of time-dependent solutions to VE}\label{sect-approximation}
In this section, we study approximation of the solution to the VE \eqref{eq-test}.

Based on the continuous dependence of solutions to the fixed point equation on the underlying DHGMs $\{\eta_{\ell}\}_{\ell\in J}$, on the initial measure $\nu_0$, as well as on function $h$ established in Proposition~\ref{prop-sol-fixedpoint}, together with the recently established results on \emph{deterministic empirical approximation of positive measures} \cite{XB19,C18,BJ22} (see Propositions~\ref{le-ini-2} and \ref{le-graph} below), we will establish the discretization of solutions of VE over finite time interval $\cI$ by a sequence of discrete ODE systems coupled on finite directed hypergraphs converging \emph{weakly} to the DHGMs $\{\eta_{\ell}\}_{\ell\in J}$ (Theorem~\ref{th-approx} below).

Beforehand, let us recall some approximation results from \cite{KX21}.

\begin{proposition}
  [Partition of $X$]\cite[Lemma~5.4]{KX21}\label{le-partition}
Assume  $(\mathbf{A1})$. Then there exists a sequence of pairwise disjoint partitions $\{A^m_i\colon i=1,\ldots,m\}_{m\in\N}$ of $X$ such that
  $X=\cup_{i=1}^m A_i^m$ for every $m\in\N$ and
  \[\lim_{m\to\infty}\max_{1\le i\le m}\Diam A^m_i=0.\]
\end{proposition}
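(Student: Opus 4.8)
The plan is to exploit the only analytic content of $(\mathbf{A1})$ that is needed here, namely that a compact subset of a finite-dimensional Euclidean space is \emph{totally bounded}, and then to reconcile this with the bookkeeping requirement that the $m$-th partition consist of \emph{exactly} $m$ cells by padding with empty sets (recall the convention $\Diam A = 0$ whenever $\# A \le 1$). Concretely, for each $m \in \N$ I would set
\[
  \omega(m) \coloneqq \inf\Bigl\{\max_{1\le i\le m}\Diam B_i \colon \{B_i\}_{i=1}^m \text{ is a Borel partition of } X\Bigr\},
\]
the smallest max-diameter attainable with $m$ Borel cells; this infimum is over a nonempty set (take $B_1=X$ and $B_2=\cdots=B_m=\varnothing$) and lies in $[0,\infty)$, and clearly $\omega(m+1)\le\omega(m)$ since any $m$-cell partition extends to an $(m+1)$-cell one by appending an empty cell.

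The first key step is a total-boundedness/disjointification lemma: for every $\varepsilon>0$ there is a \emph{finite} Borel partition of $X$ into cells of diameter at most $\varepsilon$. Indeed, by compactness $X$ is covered by finitely many open balls $B(x_1,\varepsilon/2),\dots,B(x_K,\varepsilon/2)$; setting $C_1=B(x_1,\varepsilon/2)\cap X$ and $C_i=\bigl(B(x_i,\varepsilon/2)\cap X\bigr)\setminus\bigcup_{j<i}C_j$ produces pairwise disjoint Borel sets with $\bigcup_i C_i = X$ and $\Diam C_i\le\varepsilon$ for each $i$. The second key step is to show $\omega(m)\to0$: given $\varepsilon>0$, the finite partition just constructed has some number $K=K(\varepsilon)$ of cells, so for every $m\ge K$ we may pad it with $m-K$ empty cells to obtain an $m$-cell Borel partition of max-diameter $\le\varepsilon$, whence $\omega(m)\le\varepsilon$ for all $m\ge K$; since $\varepsilon>0$ was arbitrary, $\lim_{m\to\infty}\omega(m)=0$. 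Finally, for each $m$ I would choose an $m$-cell Borel partition $\{A^m_i\}_{i=1}^m$ of $X$ realizing $\max_{1\le i\le m}\Diam A^m_i\le\omega(m)+\tfrac1m$; then $\max_{1\le i\le m}\Diam A^m_i\le\omega(m)+\tfrac1m\to0$, which is the assertion.

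The step that looks like it could cause trouble, and where the convention on $\Diam$ does the real work, is the insistence on \emph{exactly} $m$ cells for \emph{every} $m$: total boundedness only delivers partitions with some uncontrolled number of cells, and that number jumps as $\varepsilon$ shrinks. Allowing empty cells (of diameter $0$) lets us both pad a coarse partition up to any prescribed larger cardinality and keep the diameter bound intact, so no delicate refinement argument matching cardinalities to diameters is required. The remaining points are routine: measurability of the cells is automatic since balls, finite intersections, and set differences are Borel, and the case of finite $X$ is subsumed (then $\omega(m)=0$ for all $m\ge\#X$).
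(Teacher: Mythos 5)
Your proof is correct. Note that the paper itself offers no proof of this proposition — it is imported verbatim from \cite[Lemma~5.4]{KX21} — so the comparison is against the standard argument that lemma admits, and yours is exactly that argument: compactness gives total boundedness, a finite cover by $\varepsilon/2$-balls is disjointified into a finite Borel partition of mesh $\le\varepsilon$, and the exact-cardinality bookkeeping is resolved by padding with empty cells under the convention $\Diam A=0$ for $\#A\le 1$. Your observation that the padding is where the convention "does the real work" is sharper than it may look: allowing degenerate cells is not merely convenient but \emph{necessary} for the statement as written, since $(\mathbf{A1})$ permits $X$ to be a finite set, in which case no partition into $m>\#X$ nonempty cells exists. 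The one refinement worth making is this: the partitions produced here are consumed by Propositions~\ref{le-ini-2} and \ref{le-graph}, which select points $x_i^m\in A_i^m$, so empty cells are awkward downstream. When $X$ is infinite you can avoid them at no cost by splitting an infinite cell into $m-K+1$ nonempty Borel pieces (subsets of the original cell, so the diameter bound is preserved) instead of appending empty sets; when $X$ is finite the degenerate cells are unavoidable, and the downstream statements must be read with the corresponding convention. The $\omega(m)$-infimum formalism is slightly more machinery than needed — one can simply use the padded $1/j$-fine partition for all $m$ between the cardinality thresholds — but it is valid and changes nothing essential.
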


\begin{proposition}[Approximation of the initial distribution]\cite[Lemma~5.5]{KX21}\label{le-ini-2}
Assume $(\mathbf{A1})$ and $\nu_0\in\mathcal{B}_*(X,\cM_+(Y))$. Let $\{A^m_i\}_{1\le i\le m}$ be a partition of $X$ for $m\in \N$ satisfying \[\lim_{m\to\infty}\max_{1\le i\le m}\Diam A^m_i=0.\] Let $x_i^m\in A_i^m$, for $i=1,\ldots,m$, $m\in\N$. Then there exists a sequence $\{\varphi^{m,n}_{(i-1)n+j}\colon i=1,\ldots,m,j=1,\ldots,n\}_{n,m\in\N}\subseteq Y$ such that $$\lim_{m\to\infty}\lim_{n\to\infty}d_{\infty}(\nu_0^{m,n},\nu_0)=0,$$
where $\nu_0^{m,n}\in \mathcal{B}_{*}(X,\cM_+(Y))$ with $$\nu_0^{m,n,x}\coloneqq\sum_{i=1}^m\mathbbm{1}_{A^m_i}(x)\frac{a_{m,i}}{n}\sum_{j=1}^n
\delta_{\varphi^{m,n}_{(i-1)n+j}},\quad x\in X,$$
$$a_{m,i}=\begin{cases}
  \frac{\int_{A_i^m}\nu_0^x(Y)\rd\mu_X(x)}{\mu_X(A_i^m)},& \textnormal{if}\quad \mu_X(A_i^m)>0,\\
  \nu_0^{x^m_i}(Y),& \textnormal{if}\quad \mu_X(A_i^m)=0.
\end{cases}$$
\end{proposition}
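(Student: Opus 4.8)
The plan is to peel the two scales of the iterated limit apart and to insert, between $\nu_0^{m,n}$ and $\nu_0$, an intermediate measure-valued function that is piecewise constant in $x$. For each $m$ I would set, on the cell $A^m_i$, the \emph{cell barycentre}
\[
\bar\mu_{m,i}\coloneqq\begin{cases}\dfrac{1}{\mu_X(A^m_i)}\displaystyle\int_{A^m_i}\nu_0^y\,\rd\mu_X(y),&\mu_X(A^m_i)>0,\\[1.2ex]\nu_0^{x^m_i},&\mu_X(A^m_i)=0,\end{cases}
\qquad
\bar\nu_0^{m,x}\coloneqq\sum_{i=1}^m\mathbbm{1}_{A^m_i}(x)\,\bar\mu_{m,i},
\]
which takes finitely many values, with $\bar\mu_{m,i}(Y)=a_{m,i}$. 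The specific weights $a_{m,i}$ in the statement are chosen precisely so that mass is preserved, $\int_X\bar\nu_0^{m,x}(Y)\,\rd\mu_X(x)=\int_X\nu_0^x(Y)\,\rd\mu_X(x)=1$, whence $\bar\nu_0^m\in\mathcal{B}_{*}(X,\cM_+(Y))$, and likewise $\nu_0^{m,n}\in\mathcal{B}_{*}(X,\cM_+(Y))$ for every $n$. The triangle inequality $d_{\infty}(\nu_0^{m,n},\nu_0)\le d_{\infty}(\nu_0^{m,n},\bar\nu_0^m)+d_{\infty}(\bar\nu_0^m,\nu_0)$ then splits the task into an inner ($n\to\infty$) and an outer ($m\to\infty$) estimate.

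\emph{Inner estimate.} Fix $m$. On each cell with $a_{m,i}>0$ the normalisation $\bar\mu_{m,i}/a_{m,i}$ is a Borel probability measure supported in the compact set $Y$, so I would apply the deterministic empirical approximation of probability measures on Euclidean space \cite{XB19,C18,BJ22} to produce points $\{\varphi^{m,n}_{(i-1)n+j}\}_{j=1}^n\subseteq Y$ with $d_{\sf BL}\bigl(\frac1n\sum_{j=1}^n\delta_{\varphi^{m,n}_{(i-1)n+j}},\,\bar\mu_{m,i}/a_{m,i}\bigr)\to0$ as $n\to\infty$ (cells with $a_{m,i}=0$ contribute the zero measure and need no atoms). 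Rescaling by $a_{m,i}$ and taking the maximum over the \emph{finitely many} cells $i=1,\dots,m$ yields $\lim_{n\to\infty}d_{\infty}(\nu_0^{m,n},\bar\nu_0^m)=0$ for every fixed $m$. This is exactly the step that manufactures the atom positions $\varphi^{m,n}_{\cdot}$, and the deterministic (rather than random) nature of the quantisation is what allows control of the \emph{pointwise supremum} built into $d_{\infty}$.

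\emph{Outer estimate.} It remains to show $\lim_{m\to\infty}d_{\infty}(\bar\nu_0^m,\nu_0)=\lim_{m\to\infty}\sup_{x\in X}d_{\sf BL}(\bar\mu_{m,i(x)},\nu_0^x)=0$, where $i(x)$ denotes the index with $x\in A^m_{i(x)}$. Since $\bar\mu_{m,i}$ is an average of $\{\nu_0^y:y\in A^m_i\}$, testing against $f\in\mathcal{BL}_1(Y)$ and passing the supremum through the average by a convexity (Jensen) argument gives
\[
d_{\sf BL}(\bar\mu_{m,i(x)},\nu_0^x)\le\frac{1}{\mu_X(A^m_{i(x)})}\int_{A^m_{i(x)}}d_{\sf BL}(\nu_0^y,\nu_0^x)\,\rd\mu_X(y)\le\sup_{y\in A^m_{i(x)}}d_{\sf BL}(\nu_0^y,\nu_0^x),
\]
so the error is governed entirely by the bounded-Lipschitz oscillation of $x\mapsto\nu_0^x$ across cells of diameter at most $\max_i\Diam A^m_i\to0$.

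\emph{Main obstacle.} The outer estimate is the crux. Whereas the inner step is a clean cell-by-cell application of the cited quantisation results, the outer step must bound the oscillation of the measure-valued function $x\mapsto\nu_0^x$ \emph{uniformly} over all cells simultaneously. Compactness of $X$ from $(\mathbf{A1})$ upgrades continuity to uniform continuity and supplies a single modulus $\omega(\delta)\downarrow0$ with $\sup_{|x-y|\le\delta}d_{\sf BL}(\nu_0^x,\nu_0^y)\le\omega(\delta)$; together with the mesh condition $\max_i\Diam A^m_i\to0$ and the uniform bound $\|\nu_0\|<\infty$ this forces $d_{\infty}(\bar\nu_0^m,\nu_0)\le\omega(\max_i\Diam A^m_i)\to0$, which is where the regularity of $\nu_0$ genuinely enters. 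Substituting both estimates into the triangle inequality and letting first $n\to\infty$ and then $m\to\infty$ yields the claim. The same two-scale scheme, carried over the product vertex space $X^{k-1}$, is what will be needed for the higher-order coupling term in the approximation theorem.
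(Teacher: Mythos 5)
Your two-scale scheme is the right one, and it matches the mechanism this paper points to: Proposition~\ref{le-ini-2} is not proved here but imported from \cite[Lemma~5.5]{KX21}, and the surrounding text attributes it precisely to deterministic empirical approximation of measures, which is your inner step. That inner estimate (cell-by-cell quantisation of $\bar\mu_{m,i}/a_{m,i}$ on the compact set $Y$, rescaling by $a_{m,i}$, taking the maximum over finitely many cells), the Jensen-type bound for the cell averages, and the mass bookkeeping showing $\bar\nu_0^m,\nu_0^{m,n}\in\mathcal{B}_{*}(X,\cM_+(Y))$ are all correct as written.

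The genuine gap is in your outer estimate. You assert that compactness of $X$ ``upgrades continuity to uniform continuity'', but the stated hypothesis is only $\nu_0\in\mathcal{B}_{*}(X,\cM_+(Y))$: bounded and \emph{measurable} in $x$, with no continuity whatsoever, so there is nothing to upgrade and the bound $d_{\infty}(\bar\nu_0^m,\nu_0)\le\omega\bigl(\max_{1\le i\le m}\Diam A^m_i\bigr)$ has no justification from the stated assumptions. This cannot be patched, because for merely measurable $\nu_0$ the conclusion is false. Take $X=Y=[0,1]$, $\mu_X$ Lebesgue, $A^m_i$ the intervals of an equipartition, and $\nu_0^x=\delta_{\mathbbm{1}_{\mathbb{Q}}(x)}$; then $\nu_0\in\mathcal{B}_{*}(X,\cM_+(Y))$, every admissible $\nu_0^{m,n}$ is constant in $x$ on each cell, while every cell contains points where $\nu_0^x=\delta_0$ and points where $\nu_0^x=\delta_1$. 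Since $d_{\sf BL}(\delta_0,\delta_1)\ge\tfrac23$ (test with $f(\phi)=\tfrac13-\tfrac23\phi\in\mathcal{BL}_1(Y)$), the triangle inequality forces $\sup_{x\in A^m_i}d_{\sf BL}(\nu_0^{m,n,x},\nu_0^x)\ge\tfrac13$ for any choice of atoms, hence $d_{\infty}(\nu_0^{m,n},\nu_0)\ge\tfrac13$ for all $m,n$ and the iterated limit cannot vanish. So the proposition really requires weak continuity of $x\mapsto\nu_0^x$, i.e.\ $\nu_0\in\mathcal{C}_{*}(X,\cM_+(Y))$ --- the exact analogue of assumption $\mathbf{(A4)'}$ that Proposition~\ref{le-graph} \emph{does} impose on $\eta$, and the regularity actually available where the proposition is applied (Theorem~\ref{th-approx} assumes $\rho_0$ continuous in $x$ and conditions on $\nu_0\in\mathcal{C}_{*}(X,\cM_+(Y))$). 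Under that corrected hypothesis your argument is complete; against the literal hypothesis, your outer step is exactly where the proof fails, and the missing continuity assumption should have been flagged rather than silently invoked.
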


\begin{proposition}[Approximation of the DHGM]\label{le-graph}
Assume $(\mathbf{A1})$ and $(\mathbf{A4})'$. For every $m\in\N$, let $A^m_i$ and $x_i^m$ be defined in Proposition~\ref{le-ini-2} for $i=1,\ldots,m$, $m\in\N$. Then for every $\ell\in J$, there exists a sequence $\bigl\{y^{\ell,m,n}_{(i-1)n+j}\colon i=1,\ldots,m,j=1,\ldots,n\bigr\}_{m,n\in\N}\subseteq X^{\ell-1}$ such that $$\lim_{m\to\infty}\lim_{n\to\infty}d_{\infty}(\eta_{\ell}^{m,n},\eta_{\ell})=0,$$
where $\eta_{\ell}^{m,n}\in \mathcal{B}(X,\cM_+(X^{\ell-1}))$ with $$\eta_{\ell}^{m,n,x}\coloneqq\sum_{i=1}^m\mathbbm{1}_{A^m_i}(x)\frac{b_{\ell,m,i}}{n}
\sum_{j=1}^n\delta_{y^{\ell,m,n}_{(i-1)n+j}},\quad x\in X,$$ $$b_{\ell,m,i}=\begin{cases}
  \frac{\int_{A_i^m}\eta^{x}_{\ell}(X)\rd\mu_X(x)}{\mu_X(A_i^m)},& \textnormal{if}\quad \mu_X(A_i^m)>0,\\
  \eta_{\ell}^{x_i^m}(X),& \textnormal{if}\quad \mu_X(A_i^m)=0.
\end{cases}$$
\end{proposition}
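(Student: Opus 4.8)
The plan is to prove the statement one layer at a time, for each fixed $\ell\in\{1,\dots,r\}$ (there are finitely many layers, so nothing is lost), and to mirror the two-step scheme of Proposition~\ref{le-ini-2}, now with the compact target space $Y$ replaced by $X^{k_\ell-1}$; the latter is compact because $X$ is compact by $(\mathbf{A1})$, and the empirical-approximation inputs \cite{XB19,C18,BJ22} apply to finite positive measures on any compact subset of a Euclidean space. The genuinely new ingredient is $(\mathbf{A4})'$, which forces $x\mapsto\eta_\ell^x$ to be continuous from $X$ into $(\cM_+(X^{k_\ell-1}),d_{\sf BL})$; since $X$ is compact this continuity is \emph{uniform}, so there is a modulus $\omega_\ell$ with $\omega_\ell(s)\to0$ as $s\to0^+$ and $d_{\sf BL}(\eta_\ell^x,\eta_\ell^y)\le\omega_\ell(|x-y|)$ for all $x,y\in X$. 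Testing against the constant function also shows $x\mapsto\eta_\ell^x(X^{k_\ell-1})$ is continuous, hence $\|\eta_\ell\|<\infty$.

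I would introduce the intermediate piecewise-constant measure-valued function $\bar\eta_\ell^{m}$ given by $\bar\eta_\ell^{m,x}=\bar\eta_{\ell,i}^m$ for $x\in A_i^m$, where $\bar\eta_{\ell,i}^m=\tfrac{1}{\mu_X(A_i^m)}\int_{A_i^m}\eta_\ell^y\,\rd\mu_X(y)$ (a positive measure of total mass exactly $b_{\ell,m,i}$) when $\mu_X(A_i^m)>0$, and $\bar\eta_{\ell,i}^m=\eta_\ell^{x_i^m}$ otherwise. The outer limit $m\to\infty$ is then controlled purely by $(\mathbf{A4})'$: for $x\in A_i^m$ with $\mu_X(A_i^m)>0$ and any $f\in\BL_1(X^{k_\ell-1})$, Fubini and $f\in\BL_1$ give
\[
\int f\,\rd(\bar\eta_{\ell,i}^m-\eta_\ell^x)
=\frac{1}{\mu_X(A_i^m)}\int_{A_i^m}\int f\,\rd(\eta_\ell^y-\eta_\ell^x)\,\rd\mu_X(y)
\le\sup_{y\in A_i^m}d_{\sf BL}(\eta_\ell^y,\eta_\ell^x)\le\omega_\ell(\Diam A_i^m).
\]
Taking the supremum over $f$ and then over $x$ yields $d_{\infty}(\bar\eta_\ell^m,\eta_\ell)\le\max_{1\le i\le m}\omega_\ell(\Diam A_i^m)\to0$, since $\max_i\Diam A_i^m\to0$ by the partition property built into the hypothesis (cf.\ Proposition~\ref{le-partition}); the cells with $\mu_X(A_i^m)=0$ are even easier, as $x_i^m$ and $x$ share a vanishing-diameter cell.

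For the inner limit $n\to\infty$ with $m$ fixed, I would apply the deterministic empirical approximation of positive measures on the compact space $X^{k_\ell-1}$: for each $i$ with $b_{\ell,m,i}>0$ there are points $y^{\ell,m,n}_{(i-1)n+j}\in X^{k_\ell-1}$, $j=1,\dots,n$, with $d_{\sf BL}(\tfrac1n\sum_{j=1}^n\delta_{y^{\ell,m,n}_{(i-1)n+j}},\ \bar\eta_{\ell,i}^m/b_{\ell,m,i})\to0$; scaling by $b_{\ell,m,i}$ and using $d_{\sf BL}(c\mu,c\nu)=c\,d_{\sf BL}(\mu,\nu)$ gives $d_{\sf BL}(\eta_\ell^{m,n,x},\bar\eta_{\ell,i}^m)\to0$ for $x\in A_i^m$ (when $b_{\ell,m,i}=0$ the block is the zero measure and matches trivially). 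Because there are only finitely many cells for fixed $m$, the supremum over $x$ is a maximum over $i$, so $\lim_{n\to\infty}d_{\infty}(\eta_\ell^{m,n},\bar\eta_\ell^m)=0$. The triangle inequality then gives $\limsup_{n\to\infty}d_{\infty}(\eta_\ell^{m,n},\eta_\ell)\le d_{\infty}(\bar\eta_\ell^m,\eta_\ell)$, and letting $m\to\infty$ produces the claimed iterated limit.

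The main obstacle is the \emph{uniformity in $x$} demanded by $d_{\infty}=\sup_x d_{\sf BL}$: the approximant $\eta_\ell^{m,n,x}$ is constant on each cell, so for a merely measurable $\eta_\ell$ a cell straddling a discontinuity would keep $\sup_x d_{\sf BL}(\eta_\ell^{m,n,x},\eta_\ell^x)$ bounded away from $0$ no matter how small the cells are. This is precisely why $(\mathbf{A4})'$ is needed and where it enters: the barycenter estimate in the display upgrades pointwise closeness to uniform closeness through the modulus $\omega_\ell$ on the compact set $X$. By contrast, the atomization step is routine once the cited empirical-approximation results are granted, its only delicate point—uniformity over cells—being automatic from the finiteness of the partition for each fixed $m$.
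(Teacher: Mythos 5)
Your proof is correct and is essentially the argument the paper has in mind: the paper omits the proof, declaring it analogous to \cite[Lemma~5.6]{KX21}, and that argument is exactly your two-step scheme\---first replace $\eta_\ell$ by the piecewise-constant cell averages and control $d_\infty$ via the uniform continuity of $x\mapsto\eta_\ell^x$ guaranteed by $(\mathbf{A4})'$ and compactness of $X$, then atomize each cell average with the deterministic empirical approximation of \cite{XB19,C18,BJ22} on the compact space $X^{k_\ell-1}$ and combine by the triangle inequality. The only cosmetic refinement worth noting is that the reverse triangle inequality $|d_{\infty}(\eta_\ell^{m,n},\eta_\ell)-d_{\infty}(\bar\eta_\ell^{m},\eta_\ell)|\le d_{\infty}(\eta_\ell^{m,n},\bar\eta_\ell^{m})$ shows the inner limit actually exists (it equals $d_{\infty}(\bar\eta_\ell^{m},\eta_\ell)$), so the iterated limit in the statement is literally attained rather than only bounded in $\limsup$.
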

The proof of Proposition~\ref{le-graph} is analogous to that of \cite[Lemma~5.6]{KX21} and thus is omitted.
\begin{proposition}[Approximation of $h$]\cite[Lemma~5.9]{KX21}\label{le-h}
Assume $(\mathbf{A3})$ and $(\mathbf{A7})$.

For every $m\in\N$, let $x_i^m$ be defined in Proposition~\ref{le-ini-2} and $$h^m(t,z,\phi)=\sum_{i=1}^{{m}}\mathbbm{1}_{A^m_i}(z)h(t,x_i^m,\phi),\quad t\in\cI,\ z\in X,\ \phi\in Y.$$ Then
  $$\lim_{m\to\infty}\int_0^T\int_Y\sup_{x\in X}\lt|h^m(t,x,\phi)-h(t,x,\phi)\rt|\rd\phi\rd t=0.$$
\end{proposition}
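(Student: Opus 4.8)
The plan is to prove this by the Dominated Convergence Theorem, using the shrinking mesh of the partition to obtain pointwise vanishing of the integrand and the integrability clause of $(\mathbf{A7})$ to supply a fixed integrable majorant.

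First I would unwind the piecewise-constant definition of $h^m$. Fix $(t,\phi)\in\cI\times Y$. Since $\{A_i^m\}_{i=1}^m$ partitions $X$, each $x\in X$ lies in a unique cell $A_i^m$, on which $h^m(t,x,\phi)=h(t,x_i^m,\phi)$ with $|x-x_i^m|\le\Diam A_i^m\le\delta_m$, where $\delta_m\coloneqq\max_{1\le i\le m}\Diam A_i^m$. Writing $\omega_m(t,\phi)\coloneqq\sup_{x\in X}|h^m(t,x,\phi)-h(t,x,\phi)|$, this yields the bound
\[
\omega_m(t,\phi)\le\sup_{\substack{x,x'\in X\\ |x-x'|\le\delta_m}}|h(t,x,\phi)-h(t,x',\phi)|.
\]
By Proposition~\ref{le-partition} we have $\delta_m\to0$ as $m\to\infty$, and assumption $(\mathbf{A7})$ states exactly that $\lim_{|x-x'|\to0}\sup_{\phi\in Y}|h(t,x,\phi)-h(t,x',\phi)|=0$ for each $t\in\cI$. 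Hence the right-hand side, and therefore $\omega_m(t,\phi)$, converges to $0$ as $m\to\infty$ for every $(t,\phi)$ (indeed uniformly in $\phi$ for each fixed $t$).

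For the domination step, note that $h^m(t,\cdot,\phi)$ merely reshuffles values of $h(t,\cdot,\phi)$, so $|h^m(t,x,\phi)|\le\sup_{x'\in X}|h(t,x',\phi)|$ and consequently $\omega_m(t,\phi)\le 2\sup_{x\in X}|h(t,x,\phi)|\eqqcolon G(t,\phi)$. The integrability clause of $(\mathbf{A7})$ gives $\int_0^T\int_Y G\,\rd\phi\,\rd t=2\int_0^T\int_Y\sup_{x\in X}|h(t,x,\phi)|\,\rd\phi\,\rd t<\infty$, so $G$ is an integrable majorant independent of $m$. The regularity of $h$ furnished by $(\mathbf{A3})$ (continuity in $t$, local Lipschitz continuity in $\phi$) together with the $x$-continuity in $(\mathbf{A7})$ makes each $\omega_m$ measurable on $\cI\times Y$: the inner supremum over $x$ may be restricted to a countable dense subset of each cell without changing its value, reducing $\omega_m$ to a countable supremum of jointly measurable functions. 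Applying the Dominated Convergence Theorem then gives $\int_0^T\int_Y\omega_m(t,\phi)\,\rd\phi\,\rd t\to0$, which is precisely the assertion.

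The only mildly delicate points are the measurability bookkeeping and the correct choice of envelope; the analytic core — trading the uniform-in-$\phi$ modulus of continuity of $h$ in $x$ against the vanishing mesh $\delta_m$ — is immediate once the integrable majorant $G$ from $(\mathbf{A7})$ is in hand, so I do not expect any serious obstacle. This mirrors the argument of \cite[Lemma~5.9]{KX21}, with $X$ now allowed to be a general compact subset of a Euclidean space rather than $[0,1]$.
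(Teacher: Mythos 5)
Your proof is correct. Note that the paper itself offers no argument for this proposition: it is quoted verbatim from \cite[Lemma~5.9]{KX21}, so there is nothing in this document to compare against line by line; but your route is exactly the one the hypotheses are engineered for, namely bounding $\sup_{x\in X}|h^m(t,x,\phi)-h(t,x,\phi)|$ by the modulus of continuity of $h(t,\cdot,\phi)$ at scale $\delta_m=\max_i\Diam A_i^m\to0$ (first clause of $(\mathbf{A7})$, with Proposition~\ref{le-partition}/\ref{le-ini-2} supplying the vanishing mesh) and then invoking dominated convergence with the envelope $2\sup_{x\in X}|h(t,x,\phi)|$, whose integrability is precisely the second clause of $(\mathbf{A7})$. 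Your measurability remark (reducing the supremum over $x$ to a countable dense set via the $x$-continuity of $h$) is the right bookkeeping and closes the only point a careless version of this argument would leave open.
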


Now we are ready to provide a discretization of the VE on the DHGM by a sequence of ODEs.
To summarize, there exists
 \begin{enumerate}
 \item[$\bullet$] a partition $\{A^m_i\}_{1\le i\le m}$ of $X$ and points $x^m_i\in A^m_i$ for $i=1,\ldots,m$, for every $m\in\N$,
 \item[$\bullet$]  a sequence $\{\varphi^{m,n}_{(i-1)n+j}\colon i=1,\ldots,m,j=1,\ldots,n\}_{n,m\in\N}\subseteq Y^{\ell-1}$ and $\{a_{m,i}\colon i=1,\ldots,$ $m\}_{m\in\N}\subseteq\R_+$, for $\ell\in J$, and
  \item[$\bullet$] a sequence $\{y^{\ell,m,n}_{(i-1)n+j}\colon i=1,\ldots,m,j=1,\ldots,n\}_{m,n\in\N}\subseteq X^{\ell-1}$ and $\{b_{\ell,m,i}\colon i=1,\ldots,$ $m\}_{m\in\N}\subseteq\R_+$, for $\ell\in J$,
 \end{enumerate}
 such that
  $$\lim_{m\to\infty}\lim_{n\to\infty}d_{\infty}(\nu_0^{m,n},\nu_0)=0,$$
   $$\lim_{m\to\infty}\lim_{n\to\infty}d_{\infty}(\eta_{\ell}^{m,n},\eta_{\ell})=0,\quad \ell\in J,$$
     $$\lim_{m\to\infty}\int_0^T\int_Y\sup_{x\in X}\lt|h^m(t,x,\phi)-h(t,x,\phi)\rt|\rd\phi\rd t=0,$$
 where  \begin{equation}
   \label{discretization}
 \begin{split}
\nu_0^{m,n,x}\coloneqq\sum_{i=1}^m\mathbbm{1}_{A^m_i}(x)\frac{a_{m,i}}{n}\sum_{j=1}^n
\delta_{\varphi^{m,n}_{(i-1)n+j}},\quad x\in X,\\ \eta_{\ell}^{m,n,x}\coloneqq\sum_{i=1}^m\mathbbm{1}_{A^m_i}(x)\frac{b_{\ell,m,i}}{n}
\sum_{j=1}^n\delta_{y^{\ell,m,n}_{(i-1)n+j}},\quad x\in X,\\
 h^m(t,z,\phi)\coloneqq\sum_{i=1}^m\mathbbm{1}_{A^m_i}(z)h(t,x_i^m,\phi),\quad t\in\cI,\ z\in X,\ \phi\in Y.
 \end{split}
 \end{equation}
 Consider the following IVP of a coupled ODE system:
\begin{multline}
  \label{lattice}
  \dot{\phi}_{(i-1)n+j}=F^{m,n}_{i}(t,\phi_{(i-1)n+j},\Phi),\quad 0<t\le T,\quad \phi_{(i-1)n+j}(0)=\varphi^{m,n}_{(i-1)n+j},\\ i=1,\ldots,m,\ j=1,\ldots,n,
\end{multline}
where $\Phi=(\phi_{(i-1)n+j})_{1\le i\le m,1\le j\le n}$ and \begin{align*}
F^{m,n}_{i}(t,\psi,\Phi)=&  \sum_{\ell\in J}\frac{b_{\ell,m,i}}{n}\sum_{j=1}^n\sum_{p_1=1}^m\frac{a_{m,p_1}}{n}\mathbbm{1}_{A^m_{p_1}}
(y^{\ell,m,n}_{(i-1)n+j,1})
\cdots\sum_{p_{\ell-1}=1}^m\frac{a_{m,p_{\ell-1}}}{n}\\&\mathbbm{1}_{A^m_{p_{\ell-1}}}
(y^{\ell,m,n}_{(i-1)n+j,\ell-1})\sum_{q_1=1}^n\cdots\sum_{q_{\ell-1}=1}^n\\
&g_{\ell}(t,\psi,\phi^{m,n}_{(p_{1}-1)n+q_{1}},\ldots,\phi^{m,n}_{(p_{\ell-1}-1)n+q_{\ell-1}})
+h^m(t,x_i^m,\psi)
\end{align*}

The following well-posedness result is akin to Proposition~\ref{prop-discrete-characteristic} and hence the proof is omitted.

\begin{proposition}\label{prop-well-posed-lattice}
Then there exists a unique solution $\phi^{m,n}(t)=(\phi^{m,n}_{(i-1)n+j}(t))$ to \eqref{lattice},
for $m,n\in\N$.
\end{proposition}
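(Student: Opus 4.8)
The plan is to recognise \eqref{lattice} as a finite-dimensional instance of the fiberized characteristic equation \eqref{general-characteristic}, in exactly the manner that \eqref{Kuramoto} was shown to be a special case. Concretely, I would feed the discretized data $(\eta_\ell^{m,n},\nu_\cdot^{m,n},h^m)$ from \eqref{discretization} into the Vlasov operator \eqref{Eq-VOperator}: since $\eta_\ell^{m,n,x}$ is piecewise constant in $x$ (constant on each cell $A_i^m$) and atomic, and since $\nu_t^{m,n}$ is atomic with its atoms located at the evolving states $\phi^{m,n}_{(p-1)n+q}(t)$, every integral in $V$ collapses to the finite sums appearing in $F^{m,n}_i$, the indicator $\mathbbm{1}_{A^m_{p_s}}(y^{\ell,m,n}_{(i-1)n+j,s})$ being precisely the bookkeeping that records into which partition cell the $s$-th component of the atom $y^{\ell,m,n}_{(i-1)n+j}$ falls. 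Thus \eqref{lattice} is nothing but the equation of characteristics for the atoms of the discretized measure, and the whole question reduces to the well-posedness of a coupled system of $mn$ ODEs in $\R^{r_2}$.

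Next I would verify that the right-hand side meets the hypotheses of classical ODE theory. Continuity of $t\mapsto F^{m,n}_i$ is inherited from the continuity in $t$ of $g_\ell$ and $h$ in $(\mathbf{A2})$--$(\mathbf{A3})$. For the state variables, $F^{m,n}_i$ is a finite linear combination, with the fixed nonnegative coefficients $b_{\ell,m,i}/n$ and $a_{m,p}/n$, of compositions of the locally Lipschitz functions $g_\ell$ (by $(\mathbf{A2})$) and of $h^m(t,x_i^m,\cdot)$ (locally Lipschitz in $\phi$ by $(\mathbf{A3})$); a finite linear combination of locally Lipschitz maps is again locally Lipschitz, so $(\psi,\Phi)\mapsto F^{m,n}_i(t,\psi,\Phi)$ is locally Lipschitz uniformly in $t\in\cI$. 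The Picard--Lindel\"of theorem (see \cite{T12}, as used in Proposition~\ref{prop-discrete-characteristic}) then yields a unique maximal solution on some subinterval of $\cI$ containing $0$.

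To upgrade this local solution to a global one on $[0,T]$, I would establish that the compact set $Y^{mn}$ is positively invariant. Since the discretized objects are themselves an instance of the data entering \eqref{general-characteristic}, and since the atoms of $\nu_\cdot^{m,n}$ start in $Y$ (the points $\varphi^{m,n}_{(i-1)n+j}$ lie in $Y$ by Proposition~\ref{le-ini-2}), assumption $(\mathbf{A6})$ forces the associated vector field to point inward or tangentially at every boundary point of $Y$, so no trajectory can leave $Y$; this is exactly the mechanism behind the global existence clause of Theorem~\ref{theo-well-posedness-characteristic}. Consequently each $\phi^{m,n}_{(i-1)n+j}(t)$ stays in the compact set $Y$, the solution is uniformly bounded, and the blow-up alternative of Proposition~\ref{prop-discrete-characteristic} is excluded, so the solution exists on all of $[0,T]$. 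Uniqueness on $[0,T]$ then follows from the local Lipschitz bound via a standard Gr\"onwall argument.

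The step I expect to be the main obstacle is the positive-invariance argument. Assumption $(\mathbf{A6})$ is phrased for the operator $V[\eta,\nu_\cdot,h]$ built from the original DHGMs $\eta$ and the inhomogeneity $h$, whereas the discretized system is driven by $\eta_\ell^{m,n}$ and $h^m$; one must therefore check that the inward-pointing inequality on $\partial Y$ survives this replacement. In the applications of Section~\ref{sect-applications} this is immediate\---for instance when $Y$ is a torus, so that $\partial Y=\emptyset$ and the condition is vacuous\---but in general it requires that the structural mechanism forcing $V[\eta,\nu_\cdot,h]\cdot\upsilon\le0$ be robust under approximating $\eta$ and $h$ by their piecewise-constant, atomic counterparts (whose atoms remain in $Y$). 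Once this invariance is in hand, the remaining existence and uniqueness claims are routine.
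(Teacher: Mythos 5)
Your proposal takes essentially the same route as the paper, which omits the proof of Proposition~\ref{prop-well-posed-lattice} as being ``akin to Proposition~\ref{prop-discrete-characteristic}'' \---\ i.e.\ precisely your argument of local Lipschitz continuity (Picard--Lindel\"of) for local existence/uniqueness plus a compact positively invariant set for globality \---\ and your observation that the sums in $F^{m,n}_i$ are the Vlasov operator $V[\eta^{m,n},\nu^{m,n}_{\cdot},h^m]$ collapsed onto atomic data is exactly the computation the paper carries out in the proof of Theorem~\ref{th-approx}. The obstacle you flag is real but is shared by the paper: $(\mathbf{A6})$ is stated only for the original pair $(\eta,h)$, yet the invariance of $Y$ for \eqref{lattice} (and the well-posedness of the fixed point equation for $\eta^{m,n}$, $h^m$ used later) requires the inward-pointing inequality for the discretized data, which the paper implicitly assumes rather than verifies, so your caution marks a genuine imprecision rather than a defect of your argument relative to the paper's own (omitted) proof.
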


Based on Proposition~\ref{prop-well-posed-lattice}, let \begin{equation}\label{Eq-approx}
  \nu_t^{m,n,x}\coloneqq\sum_{i=1}^m\mathbbm{1}_{A^m_i}(x)\frac{a_{m,i}}{n}\sum_{j=1}^n
\delta_{\phi^{m,n}_{(i-1)n+j}(t)},\quad x\in X.
\end{equation}

Now we present the approximation of solutions to the VE \eqref{Vlasov}.
\begin{theorem}\label{th-approx}
Assume ($\mathbf{A1}$)-($\mathbf{A3}$), $\mathbf{(A4)'}$, ($\mathbf{A6}$)-($\mathbf{A7}$). Assume $\rho_0(x,\phi)$ is continuous in $x\in X$ for $\lambda$-a.e. $\phi\in Y$ such that $\rho_0\in {\sf L}^1_+(X\times Y;\mu_X\otimes\lambda)$ and $$\sup_{x\in X}\|\rho_0(x,\cdot)\|_{{\sf L}^1(Y;\lambda)}<\infty.$$  Let  $\rho(t,x,\phi)$ be the uniformly weak solution to the VE \eqref{Vlasov} with initial condition $\rho_0$. Let $\nu_{\cdot}\in\mathcal{C}(\cI,\mathcal{B}_{*}(X,\cM_{+,\abs}(Y)))$ be the measure-valued function defined in terms of the uniform weak solution to \eqref{Vlasov}:
$$\rd\nu_t^x(\phi)=\rho(t,x,\phi)\rd\phi,\quad \text{for every}\quad t\in\cI\quad \text{and}\quad x\in X,\quad \lambda\ \text{a.e.}\ \phi\in Y.$$ Then $\nu_t\in \mathcal{C}_{*}(X,\cM_+(Y))$, for all $t\in\cI$, provided $\nu_0\in\mathcal{C}_{*}(X,\cM_+(Y))$. Moreover, let
$\nu^{m,n}_0\in\mathcal{B}_{*}(X,\cM_+(Y))$, $\eta^{\ell,m,n}\in \mathcal{B}(X,\cM_+(X^{\ell-1}))$, and $h^m\in \mathcal{C}(\cI\times X\times Y,\R^{r_2})$ be defined in \eqref{discretization},
and $\nu^{m,n}_{t}$ be defined in \eqref{Eq-approx}. Then $$\lim_{n\to\infty}d_{0}(\nu_{\cdot}^{m,n},\nu_{\cdot})=0.$$
\end{theorem}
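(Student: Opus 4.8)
The conceptual heart of the proof is the observation that the empirical measure-valued function $\nu^{m,n}_{\cdot}$ in \eqref{Eq-approx} is itself a solution of the fixed point equation \eqref{Fixed} driven by the discretized data $(\eta^{m,n},h^m,\nu^{m,n}_0)$. The plan is first to substitute the atomic representations of $\eta^{m,n,x}_{\ell}$ and $\nu^{m,n,y}_t$ from \eqref{discretization} and \eqref{Eq-approx} into the Vlasov operator $V$ of \eqref{Eq-VOperator}, and to verify coordinate by coordinate that for $x\in A^m_i$ the resulting vector field is exactly $F^{m,n}_i(t,\cdot,\Phi)$ appearing in \eqref{lattice}. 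Hence the coupled ODE system \eqref{lattice} is nothing but the fiberized characteristic equation \eqref{general-characteristic} for the discretized data sampled at the representative nodes, and Theorem~\ref{theo-well-posedness-characteristic} identifies each particle as $\phi^{m,n}_{(i-1)n+j}(t)=\Phi^x_{t,0}[\eta^{m,n},\nu^{m,n}_{\cdot},h^m](\varphi^{m,n}_{(i-1)n+j})$ for $x\in A^m_i$. Since $\nu^{m,n}_t$ is then the pushforward of $\nu^{m,n}_0$ along this flow, it follows that $\nu^{m,n}_{\cdot}=\cA[\eta^{m,n},h^m]\nu^{m,n}_{\cdot}$, i.e.\ $\nu^{m,n}_{\cdot}$ solves \eqref{Fixed} with the discretized data.

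The assertion that $\nu_t\in\mathcal{C}_{*}(X,\cM_+(Y))$ whenever $\nu_0\in\mathcal{C}_{*}(X,\cM_+(Y))$ is immediate from the corresponding branch of Proposition~\ref{prop-sol-fixedpoint} under $\mathbf{(A4)'}$. For the approximation itself, both $\nu^{m,n}_{\cdot}$ and $\nu_{\cdot}$ are now fixed points, so I would bound $d_\infty(\nu^{m,n}_t,\nu_t)$ by inserting intermediate operators and invoking Proposition~\ref{prop-continuousdependence}. Writing $\nu^{m,n}_t=(\cA[\eta^{m,n},h^m]\nu^{m,n})_t$ and $\nu_t=(\cA[\eta,h]\nu)_t$, I split
\begin{align*}
d_\infty(\nu^{m,n}_t,\nu_t)\le{}&d_\infty\bigl((\cA[\eta^{m,n},h^m]\nu^{m,n})_t,(\cA[\eta^{m,n},h^m]\nu)_t\bigr)\\
&+d_\infty\bigl((\cA[\eta^{m,n},h^m]\nu)_t,(\cA[\eta,h^m]\nu)_t\bigr)\\
&+d_\infty\bigl((\cA[\eta,h^m]\nu)_t,(\cA[\eta,h]\nu)_t\bigr).
\end{align*}
The first term is the Lipschitz-in-$\nu$ estimate of Proposition~\ref{prop-continuousdependence}(ii), producing a Gronwall inequality for $t\mapsto d_\infty(\nu^{m,n}_t,\nu_t)$ whose inhomogeneity is $d_\infty(\nu^{m,n}_0,\nu_0)$ together with the remaining two terms; the second is an $\eta$-error evaluated on the fixed solution $\nu$, and the third an $h$-error.

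For the $\eta$-error I would compare the two fiberized flows directly via \eqref{Eq-VOperator}: since $g_{\ell}$ is bounded and Lipschitz on the compact set $Y$ by $\mathbf{(A2)}$ and $\nu$ is uniformly supported in $Y$, the inner multiple integral is a fixed bounded Lipschitz function of $(y_1,\ldots,y_{k_{\ell}-1})\in X^{k_{\ell}-1}$, so the difference of the vector fields is bounded by a constant multiple of $\sum_{\ell}d_\infty(\eta^{m,n}_{\ell},\eta_{\ell})$, and a Gronwall step transfers this to the pushforwards. The $h$-error is the delicate one. The two vector fields $V[\eta,\nu,h^m]$ and $V[\eta,\nu,h]$ differ only by the additive term $h^m-h$ (cf.\ \eqref{Eq-VOperator}), so a Gronwall estimate along the flow gives, pointwise in $(x,\phi)$,
\begin{equation*}
\bigl|\Phi^x_{t,0}[\eta,\nu,h^m](\phi)-\Phi^x_{t,0}[\eta,\nu,h](\phi)\bigr|\le \textnormal{e}^{Lt}\int_0^t\bigl|(h^m-h)\bigl(\tau,x,\Phi^x_{\tau,0}[\eta,\nu,h](\phi)\bigr)\bigr|\rd\tau.
\end{equation*}
Integrating against $\nu_0^x$, testing against $1$-Lipschitz functions, and changing variables by the flow (so that $\Phi^x_{\tau,0}[\eta,\nu,h]_{\sharp}\nu_0^x=\nu_{\tau}^x$) bounds the $h$-error by $\int_0^t\int_Y|(h^m-h)(\tau,x,\psi)|\rd\nu_{\tau}^x(\psi)\rd\tau$; using absolute continuity of $\nu_{\tau}^x$ and a uniform bound on its density, this is dominated by $\int_0^T\int_Y\sup_{x\in X}|h^m(\tau,x,\psi)-h(\tau,x,\psi)|\rd\psi\rd\tau$, which vanishes by Proposition~\ref{le-h}.

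Collecting the three contributions and applying Gronwall's lemma yields a bound of the form
\begin{equation*}
\sup_{t\in\cI}d_\infty(\nu^{m,n}_t,\nu_t)\le C\Bigl(d_\infty(\nu^{m,n}_0,\nu_0)+\sum_{\ell=1}^r d_\infty(\eta^{m,n}_{\ell},\eta_{\ell})+\int_0^T\!\!\int_Y\sup_{x\in X}|h^m-h|\,\rd\psi\,\rd\tau\Bigr),
\end{equation*}
with $C$ depending only on $T$, the Lipschitz and boundedness constants from $\mathbf{(A2)}$--$\mathbf{(A3)}$, and $\sup_{m,n}\|\eta^{m,n}\|$, which is finite by convergence. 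Letting $n\to\infty$ and then $m\to\infty$ and invoking Propositions~\ref{le-ini-2}, \ref{le-graph} and \ref{le-h} drives the right-hand side to $0$, which is the claim. I expect the $h$-term to be the main obstacle: Proposition~\ref{le-h} only gives convergence in the integral norm $\int_0^T\int_Y\sup_x|h^m-h|$, whereas the continuous-dependence bound of Proposition~\ref{prop-continuousdependence}(iii) is stated in the sup norm $\|h^m-h\|_{\infty,\cI}$; closing this gap forces the flow-based argument above, and in particular requires the absolute continuity of the solution $\nu_{\tau}^x$ (so that the supremum over $\phi$ can be replaced by integration against the density) together with a uniform-in-$(x,\tau)$ control of that density, the latter being the genuinely delicate ingredient beyond the mass bound furnished by $\sup_{x}\|\rho_0(x,\cdot)\|_{{\sf L}^1(Y)}<\infty$. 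A secondary point is that the discretized data are only piecewise constant in $x$, hence lie in $\mathcal{B}_{*}(X,\cM_+(Y))$ rather than $\mathcal{C}_{*}$; one checks that the $d_\infty$-estimates of Proposition~\ref{prop-continuousdependence}, being suprema over $x$ of pointwise-in-$x$ bounds, remain valid in this larger space.
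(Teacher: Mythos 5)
Your Step I is exactly the content of the paper's own proof: after verifying that substituting the atomic representations \eqref{discretization} and \eqref{Eq-approx} into the Vlasov operator \eqref{Eq-VOperator} reproduces $F^{m,n}_i$, so that $\nu^{m,n}_{\cdot}=\cA[\eta^{m,n},h^m]\nu^{m,n}_{\cdot}$ solves \eqref{Fixed} with the discretized data, the paper delegates the remaining Steps II--IV to \cite{KX21}, noting they are independent of $\widehat{V}$. Your reconstruction of those remaining steps, however, contains two estimates that fail as written, and in both cases for the same reason: you claim a quantitative (linear) bound where only a qualitative limit is available. The first is the $\eta$-error. You assert that $(y_1,\ldots,y_{k_{\ell}-1})\mapsto\int_{Y^{k_{\ell}-1}}g_{\ell}(t,\phi,\psi_1,\ldots,\psi_{k_{\ell}-1})\,\rd\otimes_{j=1}^{k_{\ell}-1}\nu_t^{y_j}(\psi_j)$ is a bounded \emph{Lipschitz} function of $y$, whence the vector-field difference is $\precsim\sum_{\ell}d_{\infty}(\eta^{m,n}_{\ell},\eta_{\ell})$. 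Lipschitz continuity in $y$ would require $y\mapsto\nu_t^y$ to be Lipschitz with respect to $d_{\sf BL}$; the hypotheses only give $\nu_t\in\mathcal{C}_{*}(X,\cM_+(Y))$, i.e.\ weak continuity, so the inner integral is bounded and continuous in $y$ but nothing more, and testing it against $\eta^{m,n,x}_{\ell}-\eta^x_{\ell}$ cannot be bounded by $d_{\sf BL}$ of the two measures. Consequently your final display, linear in $\sum_{\ell}d_{\infty}(\eta^{m,n}_{\ell},\eta_{\ell})$, is unjustified. The correct treatment is the one in the proof of Proposition~\ref{prop-sol-fixedpoint}(iii) (Appendix~\ref{appendix-prop-sol-fixedpoint}): show $\sup_{x}|V[\eta^{m,n},\nu_{\cdot}](\tau,x,\phi)-V[\eta,\nu_{\cdot}](\tau,x,\phi)|\to0$ pointwise (Lipschitz-approximating the continuous integrand), then pass to the limit by dominated convergence against the measures $\nu_{\tau}^x$ (resp.\ the supremum measure $\widehat{\nu}_{\tau}$ introduced there); this gives convergence without a rate.

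The second gap is the $h$-error, exactly where you yourself flag trouble. The flow-based reduction to $\int_0^t\int_Y|h^m-h|(\tau,x,\psi)\,\rd\nu_{\tau}^x(\psi)\,\rd\tau$ is correct (it mirrors the computations in the appendices), but the subsequent domination by $\int_0^T\int_Y\sup_{x}|h^m-h|\,\rd\psi\,\rd\tau$ needs $\sup_{\tau,x}\|\rho(\tau,x,\cdot)\|_{L^{\infty}}<\infty$. This is neither assumed---the theorem assumes only the mass bound $\sup_x\|\rho_0(x,\cdot)\|_{{\sf L}^1(Y;\mathfrak{m})}<\infty$---nor derivable: pushing an unbounded ${\sf L}^1$ density through the bi-Lipschitz flow of Theorem~\ref{theo-well-posedness-characteristic} keeps it unbounded. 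Acknowledging the ingredient as ``delicate'' does not supply it, so as written this step collapses. The repair again avoids any density bound: by Proposition~\ref{le-h}, $g_m\coloneqq\sup_x|h^m-h|\to0$ in ${\sf L}^1(\cI\times Y;\rd t\otimes\mathfrak{m})$, hence in measure; since the $g_m$ are uniformly bounded on the compact set $\cI\times X\times Y$, and the finite measure $\rd t\otimes\widehat{\nu}_t$ (or $\rd t\otimes\nu_t^x$, uniformly in $x$) is absolutely continuous with respect to $\rd t\otimes\mathfrak{m}$ by Proposition~\ref{prop-continuousdependence}(iv), one gets $\sup_x\int_0^T\int_Y|h^m-h|\,\rd\nu_{\tau}^x\,\rd\tau\to0$ with no pointwise control of the density. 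With these two corrections---that is, replacing your two linear bounds by the qualitative limit arguments of Proposition~\ref{prop-sol-fixedpoint} and its appendix proof---your triangle-inequality-plus-Gronwall scheme does prove the theorem, and is then essentially the argument the paper imports from \cite{KX21}; your closing remark that the $d_{\infty}$-estimates extend from $\mathcal{C}_{*}$ to the piecewise-constant data in $\mathcal{B}_{*}$ is correct and needed.
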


The proof of Theorem~\ref{th-approx} is provided in Section~\ref{sect-proof}.

\section{Applications}

In this section, we apply our main results to investigate the MFL of three networks with higher order interactions emerging from physics, epidemiology, and ecology.

\subsection{A Kuramoto-Sakaguchi model with heterogeneous higher order interactions}\label{sect-applications}

Consider the following Kuramoto-Sakaguchi phase reduction network proposed in \cite{BAR16}:
\begin{equation*}
\begin{split}
  \dot{\phi}_i^N=&h_i^N+\frac{1}{N}\sum_{j=1}^NW^{2,N}_{i,j}g_2(\phi_i^N,\phi_j^N)
  +\frac{1}{N^2}\sum_{j=1}^N\sum_{k=1}^NW^{3,N}_{i,j,k}g_3(\phi_i^N,\phi_j^N,\phi_k^N)\\
  &+\frac{1}{N^3}\sum_{j=1}^N\sum_{k=1}^N\sum_{p=1}^NW^{4,N}_{i,j,k,p}
  g_4(\phi_i^N,\phi_j^N,\phi_k^N,\phi_{p}^N),
\end{split}\end{equation*}
where $\phi^N_i\in\mathbb{T}$ is the phase and $h_i$ the natural frequency of the $i$-th oscillator, and $g_{\ell}$ ($\ell=2,3,4$) are the coupling functions of different higher-order interactions.

Let $(I^N_j)_{1\le j\le N}$ be an equi-partition of $X=[0,1]$ defined as in\eqref{Eq-equipartition}, and let $$h^N\coloneqq\sum_{j=1}^N\mathbbm{1}_{I_j^N}h^N_j$$

Assume
\smallskip

\noindent($\mathbf{H1.1}$) $g_{\ell}$ ($\ell=2,3,4$) are Lipschitz continuous.

\noindent($\mathbf{H1.2}$) $h_j^N$ ($j=1,\ldots,N$) fulfill that there exists $h\in\mathcal{C}(X)$ such that
$$\lim_{N\to\infty}\sup_{x\in X}|h(x)-h^N(x)|=0.$$
\medskip

\noindent($\mathbf{H1.3}$) For $\ell=2,3,4$, $W^{\ell,N}$ converges in the uniform bounded Lipschitz metric to $\eta_{\ell}\in\mathcal{C}(X,\mathcal{M}_+(X^{\ell}))$.

\medskip

Note that ($\mathbf{H1.2}$) implies that $$\lim_{N\to\infty}\|h-h^N\|_1=0.$$

Now we consider the VE
\begin{align}
 \label{Vlasov-model-1} &\frac{\partial\rho(t,x,\phi)}{\partial t}+\textrm{div}_{\phi}\left(\rho(t,x,\phi)\widehat{V}[\eta,\rho,h](t,x,\phi)\right)=0,\ t\in(0,T],\ x\in X,\ \lambda\text{-a.e.}\ \phi\in \T,\\
\nonumber  &\rho(0,\cdot)=\rho_0(\cdot),
  \end{align}
where \begin{align*}
  \widehat{V}[\eta,\rho,h](t,x,\phi)=&h(x)+\int_X\int_{\T}g_2(\phi(t,x),\psi)\rho(t,y_1,\psi)\rd\psi\rd\eta^x_{2}(y_1)\\
  &+\int_{X^2}\int_{\T}\int_{\T}g_3(\phi(t,x),\psi_1,\psi_2)\rho(t,y_1,\psi_1)\rho(t,y_2,\psi_2)
\rd\psi_1\rd\psi_2\rd\eta^x_{3}(y_1,y_2)\\
  &+\int_{X^3}\int_{\T}\int_{\T}\int_{\T}g_4(\phi(t,x),\psi_1,\psi_2,\psi_3)\rho(t,y_1,\psi_1)
  \rho(t,y_2,\psi_2)\rho(t,y_3,\psi_3)\\
  &\rd\psi_1\rd\psi_2\rd\psi_3\rd\eta^{x}_{4}(y_1,y_2,y_3).
\end{align*}

\begin{theorem}
Assume $\mathbf{(H1.1)}$-$\mathbf{(H1.3)}$. Let $T>0$. Assume $\rho_0(x,\phi)$ is continuous in $x\in X$ for $\lambda$-a.e. $\phi\in Y$ such that $\rho_0\in {\sf L}^1_+(X\times Y;\mu_X\otimes\lambda)$ and $$\sup_{x\in X}\|\rho_0(x,\cdot)\|_{{\sf L}^1(Y;\lambda)}<\infty.$$ Define $\nu_0\in\cB_*(X,\cM_+(Y))$ by $$\rho_0(x,\phi)=\frac{\rd\nu_0(x,\phi)}{\rd\phi}\quad \text{for}\quad x\in X\quad \text{and}\quad \lambda\text{-a.e.}\quad \phi\in\sT.$$ Then there exists a unique uniformly weak solution $\rho(t,\cdot)$ to the VE \eqref{Vlasov-model-1}. Moreover, if $\nu_0\in\cC_*(X,\cM_+(Y))$ and  $\lim_{N\to\infty}d_{\infty}(\nu_{N,0},\nu_0)=0$, then
\begin{equation*}
  \lim_{N\to\infty}d_0(\nu_{N,\cdot},\nu_{\cdot})=0.
\end{equation*}
In particular,
\begin{equation*}
\lim_{N\to\infty}d_{\sf BL}\Bigl(\frac{1}{N}\sum_{i=1}^N\delta_{\phi^N_i(t)},
\int_X\nu_t^x(\sbullet)\rd\mu_X(x)\Bigr)=0,\quad \text{for}\quad t\in\cI.
\end{equation*}
\end{theorem}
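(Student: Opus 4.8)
The plan is to present the statement as a specialization of the abstract theory, realizing the network as the instance $X=[0,1]$ (so the ambient dimension is $r_1=1$), $\mu_X$ the Lebesgue measure, $Y=\mathbb{T}$, phase dimension $r_2=1$, and $r=3$ layers with cardinalities $k_\ell=\ell+1$; under these choices the operator $\widehat{V}[\eta,\rho,h]$ written just before \eqref{Vlasov-model-1} is termwise \eqref{Vrho-0}. First I would check that $\mathbf{(H1.1)}$--$\mathbf{(H1.3)}$ deliver the abstract hypotheses. $\mathbf{(A1)}$ is clear, as $[0,1]$ is a compact Polish probability space under the metric of $|\cdot|$. Because the coupling functions carry no $t$-dependence, $\mathbf{(A2)}$ reduces to the (global, hence local) Lipschitz continuity of the $g_\ell$ granted by $\mathbf{(H1.1)}$. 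Since $h(t,x,\phi)=h(x)$ depends on neither $t$ nor $\phi$ and $h\in\mathcal{C}(X)$ by $\mathbf{(H1.2)}$, the $\phi$-Lipschitz constant vanishes and $\mathbf{(A3)}$ holds; the same facts yield $\mathbf{(A7)}$, for $x\mapsto h(x)$ is uniformly continuous and bounded on the compact $X$, hence integrable over the finite-measure set $Y=\mathbb{T}$. Finally $\mathbf{(A4)}$ and $\mathbf{(A4)'}$ are precisely $\mathbf{(H1.3)}$, noting $X^{k_\ell-1}=X^{\ell}$.

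For $\mathbf{(A5)}$ there is nothing to prove, since everything is supported in the compact $Y=\mathbb{T}$, and for $\mathbf{(A6)}$ I would take $Y=\mathbb{T}$ itself: as $\mathbb{T}$ is a compact manifold without boundary, the vector field $V[\eta,\nu_\cdot,h]$ is globally defined on it and $\mathbb{T}$ is trivially positively invariant, so the boundary inequality in $\mathbf{(A6)}$ is vacuous and the global fiberized flow of Theorem~\ref{theo-well-posedness-characteristic} exists. With $\mathbf{(A1)}$--$\mathbf{(A4)}$ and $\mathbf{(A6)}$ in force, Theorem~\ref{thm-well-posedness} gives the existence and uniqueness of the uniformly weak solution $\rho(t,\cdot)$ of \eqref{Vlasov-model-1} with $\rho(0,\cdot)=\rho_0$, and the continuity part of Proposition~\ref{prop-sol-fixedpoint} gives $\nu_t\in\mathcal{C}_*(X,\cM_+(Y))$ for all $t\in\cI$ once $\nu_0\in\mathcal{C}_*(X,\cM_+(Y))$.

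For the approximation I would invoke Theorem~\ref{th-approx}. Writing $\eta^N_\ell$ for the DHGM attached to $W^{(\ell,N)}$ and $h^N=\sum_j\mathbbm{1}_{I^N_j}h^N_j$, the hypotheses $\mathbf{(H1.3)}$ and $\mathbf{(H1.2)}$ supply $d_\infty(\eta^N_\ell,\eta_\ell)\to0$ and $\|h^N-h\|_\infty\to0$, i.e.\ the graph- and drift-approximations of Propositions~\ref{le-graph} and~\ref{le-h}, while the standing hypothesis $d_\infty(\nu_{N,0},\nu_0)\to0$ supplies the initial approximation of Proposition~\ref{le-ini-2}. The empirical measure $\nu_{N,\cdot}$ of the trajectories of \eqref{Kuramoto}, organized along the equipartition $\{I^N_i\}$ with uniform weights, solves the fixed point equation \eqref{Fixed} with discretized data $(\eta^N,h^N,\nu_{N,0})$ by the reduction recorded after Proposition~\ref{prop-discrete-characteristic}. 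Hence the continuous-dependence engine behind Theorem~\ref{th-approx}, namely Proposition~\ref{prop-sol-fixedpoint}, applies: chaining part (iii) (transporting $\eta^N\to\eta$ with $\nu_0$ held fixed), part (ii) (transporting $h^N\to h$) and part (i) (transporting $\nu_{N,0}\to\nu_0$), through intermediate solutions and using the uniform bound $\sup_N\|\eta^N\|<\infty$ to keep $L_1,L_2,L_3$ bounded, produces $\lim_{N\to\infty}d_0(\nu_{N,\cdot},\nu_\cdot)=0$.

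The final empirical limit then follows by integrating the fiberwise estimate over $X$. Since $\{I^N_i\}$ is an equipartition with uniform weights, mass conservation (Proposition~\ref{prop-continuousdependence}(i)) gives $\int_X\nu_{N,t}^x\,\rd\mu_X(x)=\tfrac1N\sum_{i=1}^N\delta_{\phi^N_i(t)}$, and for every $f\in\BL_1(\mathbb{T})$ and every $t$ one has $\int_X\bigl(\int f\,\rd(\nu_{N,t}^x-\nu_t^x)\bigr)\rd\mu_X(x)\le d_\infty(\nu_{N,t},\nu_t)$ because $\mu_X$ is a probability measure; taking the supremum over $f$ gives $d_{\sf BL}\bigl(\tfrac1N\sum_i\delta_{\phi^N_i(t)},\int_X\nu_t^x\,\rd\mu_X\bigr)\le d_\infty(\nu_{N,t},\nu_t)\le d_0(\nu_{N,\cdot},\nu_\cdot)\to0$. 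I expect the main obstacle to be the chaining step rather than the assumption-checking: Proposition~\ref{prop-sol-fixedpoint}(iii) is stated only for continuous initial data, whereas $\nu_{N,0}$ is atomic and hence discontinuous in $x$, so it cannot be fed into (iii) directly; the remedy is to route the $\eta$-perturbation through the fixed continuous datum $\nu_0$ and absorb the genuinely atomic perturbation $\nu_{N,0}\to\nu_0$ via part (i), which is valid on all of $\mathcal{B}_*(X,\cM_+(Y))$, while verifying that the relevant Lipschitz constants stay uniformly bounded in $N$.
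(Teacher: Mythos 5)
Your proposal is correct and takes essentially the same route as the paper's own (very terse) proof: the paper likewise reduces the statement to Theorem~\ref{thm-well-posedness} and Theorem~\ref{th-approx} after observing that the compact set $\mathbb{T}$, being automatically invariant under the fiberized flow, plays the role of the compact convex set $Y$, so that ($\mathbf{A6}$) poses no constraint. Your extra care in chaining Proposition~\ref{prop-sol-fixedpoint}(i)--(iii) — routing the $\eta$-perturbation through the fixed continuous datum $\nu_0$ and absorbing the atomic perturbation $\nu_{N,0}\to\nu_0$ via part (i), with constants kept uniform in $N$ — correctly fills in details that the paper leaves implicit.
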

\begin{proof}
  The proof is similar to those of Theorem~\ref{thm-well-posedness} and Theorem~\ref{th-approx}. Here we still have the invariance under the flow of fiberized equation of characteristics of a compact set which is $\mathbb{T}$ (in lieu of a compact set $Y$ in a Euclidean space).
\end{proof}
\subsection{An epidemic model with higher-order interactions}

Assume

\medskip
\noindent ($\mathbf{H2.1}$)  $(X,\mathfrak{B}(X),\mu_X)$ is a compact probability space.

\medskip
\noindent ($\mathbf{H2.2}$) For $(u_1,u_2)\in\R_+^2$, let $\beta(t,u_1,u_2)\ge0$ be the disease transmission function satisfying $\beta(t,u_1,u_2)=0$ provided $u_1u_2=0$. Moreover, $\beta$ is continuous in $t$, and locally Lipschitz continuous in $u_1,\ u_2$  uniformly in $t$.

\medskip

\noindent ($\mathbf{H2.3}$)  For $u\in\R_+$, let $\gamma(t,x,u)\ge0$ be the recovery rate function, and for every $x\in X$, $\gamma(t,x,0)=0$. Moreover, $\gamma$ is continuous in $t$,  and continuous in $x$ uniformly in $u$, and Lipschitz continuous in $u\in \R_+$ uniformly in $t$.

For any fixed $N\in\N$, let \begin{equation}
  \label{Eq-Y-SIS}Y=\{u\in\R^2_+\colon u_1+u_2=N\}.
\end{equation}

\medskip
\noindent{$\mathbf{(H2.4)}$}   $\eta\in \mathcal{C}(X,\mathcal{M}_{+}(X^2))$.
\medskip

\noindent{$\mathbf{(H2.5)}$} $\nu_{\cdot}\in \mathcal{C}(\cI,\mathcal{B}_{*}(X,\cM_+(\R^2)))$  is uniformly compactly supported within $Y\subseteq\R_+^{2}$.

\medskip

Under ($\mathbf{H2.1}$)-($\mathbf{H2.5}$), motivated by \cite{BKS16}, we propose the following generalized non-local multi-group SIS epidemic model on a DHGM $\eta$ incorporating the higher-order interactions due to the nonlinear dependence of both the infection pressure and the community structure (home and workplace):

\begin{align*}
  \frac{\partial S_x}{\partial t} = & -\int_{X^2}\int_{\R_+^{2}}\int_{\R_+^{2}}\left(\beta(t,\psi_{1,2},S_x)+\beta(t,\psi_{2,2},S_x)
  \right)\rd\nu_t^{y_2}(\psi_2)\rd\nu_t^{y_1}(\psi_1)\rd\eta^x(y_1,y_2)\\&+\gamma(t,x,I_x),\\
  \frac{\partial I_x}{\partial t} = & \int_{X^2}\int_{\R_+^{2}}\int_{\R_+^{2}}\left(\beta(t,\psi_{1,2},S_x)+\beta(t,\psi_{2,2},S_x)
  \right)\rd\nu_t^{y_2}(\psi_2)\rd\nu_t^{y_1}(\psi_1)\rd\eta^x(y_1,y_2)\\&-\gamma(t,x,I_x),
\end{align*}
which further generalizes the epidemic network on a digraph measure proposed in \cite{KX21}. Here $S_x$ and $I_x$ stand for the number of susceptible and infected individuals at location $x\in X$ (or interpreted as in the group with label $x$), $\beta(t,\psi_2^1,S_x)$ stands for the infection caused by  family members of $S_x$ at home while $\beta(t,\psi_2^2,S_x)/S_x$ the infection rate caused by  colleagues of $S_x$ in the workplace, and $\eta\colon X\to\cM_+(X^2)$ is the generalized hypergraph.

By $\mathbf{(H2.3)}$, let
$$g(t,\phi,\psi^1,\psi^2)=\beta(t,\psi_{1,2},\phi_1)+\beta(t,\psi_{2,2},\phi_1)\begin{pmatrix}
  -1\\
  1
\end{pmatrix},\quad h(t,x,\phi)=\gamma(t,x,\phi_1)\begin{pmatrix}
1\\
-1
\end{pmatrix},$$ $$V[\eta,\nu_{\cdot},h](t,x,\phi)=\int_{X^2}\int_{Y}\int_{Y}g(t,\phi,\psi_1,\psi_2)
\rd\nu_t^{y_2}(\psi_2)\rd\nu_t^{y_1}(\psi_1)\rd\eta^x(y_1,y_2)+h(t,x,\phi),$$and

\begin{equation*}
\widehat{V}[\eta,\rho_{\cdot},h](t,x,\phi)=\int_{X^2}\int_{Y}\int_{Y}g(t,\phi,\psi_1,\psi_2)
\rho(t,y_1,\psi_1)\rho(t,y_2,\psi_2)\rd\psi_2\rd\psi_1\rd\eta^x(y_1,y_2)+h(t,x,\phi).
\end{equation*}
Consider the VE
\begin{equation}\label{Vlasov-SIS}
\begin{split}
&\frac{\partial\rho(t,x,\phi)}{\partial t}+\textrm{div}_{\phi}\left(\rho(t,x,\phi)\widehat{V}[\eta,\rho(\cdot),h](t,x,\phi)\right)=0,\quad t\in(0,T],\ x\in X,\ \lambda\text{-a.e.}\ \phi\in Y,\\
 &\rho(0,\cdot)=\rho_0(\cdot).
\end{split}
  \end{equation}

According to Propositions~\ref{le-partition}-\ref{le-h}, there exists
 \begin{enumerate}
 \item[$\bullet$] a partition $\{A^m_i\}_{1\le i\le m}$ of $X$ and points $x^m_i\in A^m_i$ for $i=1,\ldots,m$, for every $m\in\N$,
 \item[$\bullet$]  a sequence $\{\varphi^{m,n}_{(i-1)n+j}\coloneqq i=1,\ldots,m,j=1,\ldots,n\}_{n,m\in\N}\subseteq Y$ and $\{a_{m,i}\colon i=1,\ldots,$ $m\}_{m\in\N}\subseteq\R_+$, and
  \item[$\bullet$] a sequence $\{y^{m,n}_{(i-1)n+j}\colon i=1,\ldots,m,j=1,\ldots,n\}_{m,n\in\N}\subseteq X^2$ and $\{b_{m,i}\colon i=1,\ldots,$ $m\}_{m\in\N}\subseteq\R_+$,
 \end{enumerate}
 such that
  $$\lim_{m\to\infty}\lim_{n\to\infty}d_{\infty}(\nu_0^{m,n},\nu_0)=0,$$
   $$\lim_{m\to\infty}\lim_{n\to\infty}d_{\infty}(\eta^{m,n},\eta)=0,$$
     $$\lim_{m\to\infty}\int_0^T\int_Y\sup_{x\in X}\lt|h^m(t,x,\phi)-h(t,x,\phi)\rt|\rd\phi\rd t=0,$$
 where
 \begin{equation}
   \label{discretization-SIS}
 \begin{split}
\nu_0^{m,n,x}\coloneqq\sum_{i=1}^m\mathbbm{1}_{A^m_i}(x)\frac{a_{m,i}}{n}\sum_{j=1}^n
\delta_{\varphi^{m,n}_{(i-1)n+j}},\quad x\in X,\\ \eta^{m,n,x}\coloneqq\sum_{i=1}^m\mathbbm{1}_{A^m_i}(x)\frac{b_{m,i}}{n}
\sum_{j=1}^n\delta_{y^{m,n}_{(i-1)n+j}},\quad x\in X,\\
 h^m(t,z,\phi)\coloneqq\sum_{i=1}^m\mathbbm{1}_{A^m_i}(z)h(t,x_i^m,\phi),\quad t\in\cI,\ z\in X,\ \phi\in Y.
 \end{split}
 \end{equation}
 Consider the following IVP of a coupled ODE system:
\begin{multline}
  \label{lattice-SIS}
  \dot{\phi}_{(i-1)n+j}=F^{m,n}_{i}(t,\phi_{(i-1)n+j},\Phi),\quad 0<t\le T,\quad \phi_{(i-1)n+j}(0)=\varphi^{m,n}_{(i-1)n+j},\\ i=1,\ldots,m,\ j=1,\ldots,n,
\end{multline}
where $\Phi=(\phi_{(i-1)n+j})_{1\le i\le m,1\le j\le n}$ and \begin{align*}
  F^{m,n}_{i}(t,\psi,\Phi)=&\frac{b_{m,i}}{n}\sum_{j=1}^n\sum_{p_1=1}^m\frac{a_{m,p_1}}{n}
\mathbbm{1}_{A^m_{p_1}}(y^{m,n}_{(i-1)n+j,1})\sum_{p_2=1}^m\frac{a_{m,p_2}}{n}
\mathbbm{1}_{A_{p_2}^m}(y^{m,n}_{(i-1)n+j,2})\\
&\sum_{q_1=1}^{n}\sum_{q_2=1}^ng(t,\psi,\phi_{(p_1-1)n+q_1},\phi_{(p_2-1)n+q_2})+h^m(t,x^m_i,\psi).
\end{align*}

Then by Proposition~\ref{prop-well-posed-lattice}, there exists a unique solution $\phi^{m,n}(t)=(\phi^{m,n}_{(i-1)n+j}(t))_{1\le i\le m, 1\le j\le n}$ to \eqref{lattice-SIS},
for $m,n\in\N$.

For $t\in\cI$, define \begin{equation}\label{Eq-approx-SIS}
\nu_t^{m,n,x}\coloneqq\sum_{i=1}^m\mathbbm{1}_{A^m_i}(x)\frac{a_{m,i}}{n}\sum_{j=1}^{n}\delta_{\phi^{m,n}_{(i-1)n+j}(t)},\quad x\in X.
\end{equation}
\begin{theorem}
  Assume ($\mathbf{H3.1}$)-($\mathbf{H3.4}$). Then there exists a unique uniformly weak solution $\rho(t,x,\phi)$ to \eqref{Vlasov-SIS}. Assume additionally $\rho_0(x,\phi)$ is continuous in $x\in X$ for $\lambda$-a.e. $\phi\in Y$ such that $\rho_0\in {\sf L}^1_+(X\times Y;\mu_X\otimes\lambda)$ and $$\sup_{x\in X}\|\rho_0(x,\cdot)\|_{{\sf L}^1(Y;\lambda)}<\infty.$$ Let $\nu_{\cdot}\in\mathcal{C}(\cI,\mathcal{B}_{*}(X,\cM_{+,\abs}(Y)))$ be the measure-valued function defined in terms of the uniformly weak solution to \eqref{Vlasov-SIS}:
$$\rd\nu_t^x=\rho(t,x,\phi)\rd\phi,\quad \text{for every}\quad t\in\cI,\quad \text{and}\quad x\in X.$$  Then $\nu_t\in \mathcal{C}_{*}(X,\cM_+(Y))$, for all $t\in\cI$, provided $\nu_0\in\mathcal{C}_{*}(X,\cM_+(Y))$. Moreover, let
$\nu^{m,n}_0\in\mathcal{B}_{*}(X,\cM_+(Y))$, $\eta^{m,n}\in \mathcal{B}(X,$ $\cM_+(Y))$, and $h^m\in \mathcal{C}(\cI\times X\times Y,\R^2)$ be defined in \eqref{discretization-SIS},
and $\nu^{m,n}_{t}$ be defined in \eqref{Eq-approx-SIS}. Then $$\lim_{n\to\infty}d_{0}(\nu_{\cdot}^{m,n},\nu_{\cdot})=0.$$
\end{theorem}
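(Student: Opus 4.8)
The plan is to recognise the epidemic model as a special instance of the generalised IPS \eqref{general-characteristic} with $r=1$, $k_1=3$ and $r_2=2$, and then to invoke Theorem~\ref{thm-well-posedness} and Theorem~\ref{th-approx} more or less verbatim. Indeed, with the $g$ and $h$ displayed above the Vlasov operator $\widehat{V}[\eta,\rho(\cdot),h]$ coincides with \eqref{Vrho-0}, so the first task is simply to check that the standing hypotheses translate into $(\mathbf{A1})$-$(\mathbf{A5})$ and $(\mathbf{A7})$: $(\mathbf{H2.1})$ gives $(\mathbf{A1})$; the $t$-continuity and local Lipschitz continuity of $\beta$ in $(\mathbf{H2.2})$ give $(\mathbf{A2})$ for $g$; the analogous properties of $\gamma$ in $(\mathbf{H2.3})$, together with boundedness on the compact set $Y$, give $(\mathbf{A3})$ and $(\mathbf{A7})$ for $h$; $(\mathbf{H2.4})$ is $(\mathbf{A4})'$ (hence $(\mathbf{A4})$); and $(\mathbf{H2.5})$ is $(\mathbf{A5})$.

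The crucial, model-specific step is the verification of $(\mathbf{A6})$, namely the positive invariance of the simplex $Y=\{u\in\R_+^2\colon u_1+u_2=N\}$ under the fiberized characteristic flow. Two observations suffice. First, total population is conserved along the flow: since each integrand satisfies $g(t,\phi,\psi_1,\psi_2)\cdot(1,1)=0$ and $h(t,x,\phi)\cdot(1,1)=0$, one has $V[\eta,\nu_{\cdot},h]\cdot(1,1)=0$, so the field is everywhere tangent to the line $\{u_1+u_2=N\}$ and the flow cannot leave its affine hull. Second, at the two endpoints of the segment the biological sign conditions force the field to point inward: when $S_x=0$ hypothesis $(\mathbf{H2.2})$ makes the transmission term vanish, so $V$ reduces to the recovery contribution, a nonnegative multiple of $(1,-1)^{\top}$ that increases $S_x$; when $I_x=0$ hypothesis $(\mathbf{H2.3})$ makes the recovery term vanish, so $V$ reduces to the transmission contribution, a nonnegative multiple of $(-1,1)^{\top}$ that increases $I_x$. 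Reading $\upsilon(\phi)$ in $(\mathbf{A6})$ as the outward normal of the one-dimensional segment at its endpoints (i.e.\ the direction $\pm(1,-1)$ within the affine hull), both computations give $V\cdot\upsilon\le0$, which is exactly $(\mathbf{A6})$.

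With $(\mathbf{A1})$-$(\mathbf{A4})$ and $(\mathbf{A6})$ established, Theorem~\ref{thm-well-posedness} yields the unique uniformly weak solution $\rho(t,x,\phi)$ to \eqref{Vlasov-SIS}. Feeding in the additional regularity of $\rho_0$ together with $(\mathbf{A4})'$ and $(\mathbf{A7})$ places us in the hypotheses of Theorem~\ref{th-approx}; that theorem then gives $\nu_t\in\mathcal{C}_{*}(X,\cM_+(Y))$ for every $t\in\cI$ whenever $\nu_0\in\mathcal{C}_{*}(X,\cM_+(Y))$, and the convergence $\lim_{n\to\infty}d_0(\nu^{m,n}_{\cdot},\nu_{\cdot})=0$, where the discretising data $\nu_0^{m,n}$, $\eta^{m,n}$, $h^m$ of \eqref{discretization-SIS} and the empirical solution $\nu^{m,n}_t$ of \eqref{Eq-approx-SIS} are precisely those supplied by Propositions~\ref{le-ini-2}-\ref{le-h} together with Proposition~\ref{prop-well-posed-lattice}.

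I expect the only real subtlety to be the correct reading of $(\mathbf{A6})$ on the \emph{lower-dimensional} set $Y$: since $Y$ has empty interior in $\R^2$, one must separate the exact tangency $V\cdot(1,1)=0$, which confines the flow to the affine hull, from the endpoint inequalities, which confine it to the segment; together these replace the usual single inward-normal condition. A secondary, routine point is to upgrade the local Lipschitz hypotheses on $\beta,\gamma$ to the uniform bounds on $Y$ that underlie the constants $L_1,L_2,L_3$ of Proposition~\ref{prop-continuousdependence}, which is immediate from compactness of $Y$.
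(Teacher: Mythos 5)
Your proposal is correct and follows essentially the same route as the paper: translate $(\mathbf{H2.1})$--$(\mathbf{H2.5})$ into $(\mathbf{A1})$--$(\mathbf{A5})$ and $(\mathbf{A7})$, verify the model-specific assumption $(\mathbf{A6})$ for the set $Y$ of \eqref{Eq-Y-SIS}, and then invoke Theorem~\ref{thm-well-posedness} and Theorem~\ref{th-approx}. In fact your verification of $(\mathbf{A6})$ is more complete than the paper's, which justifies invariance solely by the conservation law $\frac{\partial}{\partial t}(S_x+I_x)=0$; conservativity alone only gives tangency to the affine line $\{u_1+u_2=N\}$, and your additional endpoint analysis (the transmission term vanishing at $S_x=0$ via $(\mathbf{H2.2})$ and the recovery term vanishing at $I_x=0$ via $(\mathbf{H2.3})$, each forcing the field inward along the segment) is exactly what is needed to confine the flow to the positive part of that line, i.e.\ to $Y$ itself.
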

\begin{proof}
It is straightforward to verify that ($\mathbf{H3.1}$) implies ($\mathbf{A1}$),  ($\mathbf{H3.2}$) implies ($\mathbf{A2}$),  and ($\mathbf{H3.3}$) implies ($\mathbf{A3}$) and ($\mathbf{A7}$).
It remains to show ($\mathbf{A6}$) is fulfilled with $Y$ defined in \eqref{Eq-Y-SIS}. This is a simple consequence of the fact that  this SIS model is conservative:
$$\frac{\partial }{\partial t}(S_x(t)+I_x(t))=0.$$
\end{proof}
\subsection{Lotka-Volterra model with dispersal on a hypergraph}

Assume that
\medskip

\noindent ($\mathbf{H3.1}$)  $(X,\mathfrak{B}(X),\mu_X)$ be a compact probability space.

\medskip
\noindent ($\mathbf{H3.2}$)  $W_{i,j}$ are odd functions and locally Lipschitz continuous satisfying $0\le W_{i,j}(u)$ $\le u$ for all $u\in \R^+$ for all $i,j=1,2$.

\medskip

\noindent ($\mathbf{H3.3}$) $\eta_1,\ \eta_2\in \mathcal{B}(X,\cM_+(X^2))$.

\medskip

Let $\Lambda_1,\ \Lambda_2>0$ satisfy
\begin{equation}
  \label{Lambdacondition}
\Lambda_1\ge\frac{\alpha}{\beta},\quad \Lambda_2\ge-\frac{\iota}{\theta}+\frac{\sigma}{\theta}\Lambda_1.
\end{equation}
Let $Y=\{\phi\in\R^2_+\colon \phi_1\le\Lambda_1,\ \phi_2\le\Lambda_2\}$ be the rectangle in the positive cone, which is a convex compact set.

\medskip

\noindent{$\mathbf{(H3.4)}$} $\nu_{\cdot}\in \mathcal{C}(\cI,\mathcal{C}_{*}(X,\cM_+(\R^2)))$  is uniformly compactly supported within $Y\subseteq\R_+^2$.

\medskip

Under ($\mathbf{H3.1}$)-($\mathbf{H3.4}$), consider the general Lotka-Volterra with the species of two types moving on $3$-uniform DHGMs $\eta_1$ and $\eta_2$:
\begin{equation*}
\begin{split}
  \frac{\partial \phi_1(t,x)}{\partial t}=&\phi_1(t,x)(\alpha-\beta\phi_1(t,x)-\gamma\phi_2(t,x))+\int_X\int_X\int_Y\int_Y
  \\
  &\left(W_{1,1}(\psi_1-\phi_1(t,x))+W_{1,2}(\psi_2-\phi_1(t,x))\right)\rd\nu_t^{y_1}(\psi_1)
  \rd\nu_t^{y_2}(\psi_2)\rd\eta^x_1(y_1,y_2)\\
  \frac{\partial \phi_2(t,x)}{\partial t}=&\phi_2(t,x)(-\iota+\sigma\phi_1(t,x)-\theta\phi_2(t,x))
  +\int_X\int_X\int_Y\int_Y\\
  &\left(W_{2,1}(\psi_1-\phi_2(t,x))+W_{2,2}(\psi_2-\phi_2(t,x))\right)\rd\nu_t^{y_1}(\psi_1)
  \rd\nu_t^{y_2}(\psi_2)\rd\eta^x_2(y_1,y_2),
  \end{split}
\end{equation*}
where $\phi_1(t)$ and $\phi_2(t)$ stand for population densities of two competing species at time $t$, respectively, and all given functions and parameters are non-negative. The model can be regarded as a generalization of the Lotka-Volterra model on the graph proposed in \cite{S20}.

Let \[g_2(t,\phi,\psi_1,\psi_2)=\begin{pmatrix}
  W_{1,1}(\psi_1-\phi_1)+W_{1,2}(\psi_2-\phi_1)\\
  0
\end{pmatrix},\]
\[ g_3(t,\phi,\psi_1,\psi_2)=\begin{pmatrix}
  0\\
  W_{2,1}(\psi_1-\phi_2)+W_{2,2}(\psi_2-\phi_2)
\end{pmatrix},\] \[h(\phi)=\begin{pmatrix}
  \phi_1(\alpha-\beta\phi_1-\gamma\phi_2)\\
  \phi_2(-\iota+\sigma\phi_1-\theta\phi_2)
\end{pmatrix},\] and \begin{align*}
\widehat{V}[\eta,\rho_{\cdot},h](t,x,\phi)=&\sum_{\ell=1}^2
\int_X\int_X\int_Y\int_Yg_{\ell}(t,\phi,\psi_1,\psi_2)\rho(t,y_1,\psi_1)\rho(t,y_2,\psi_2)\rd\psi_1\rd\psi_2
\rd\eta_{\ell}^x(y_1,y_2)\\&+h(\phi).
\end{align*}
\begin{remark}
{Despite that there are two DHGMs of the same rank, the main results obtained in previous sections do hold for $J$ with multiplicity. In other words, the main results hold \emph{mutatis mutandis} if uniform HDGMs in the vector field of the VE have the same rank while are associated with different coupling functions $g_{\ell}$.}
\end{remark}

Consider the VE
\begin{equation}\label{Vlasov-LS}
\begin{split}
&\frac{\partial\rho(t,x,\phi)}{\partial t}+\textrm{div}_{\phi}\left(\rho(t,x,\phi)\widehat{V}[\eta,\rho(\cdot),h](\phi)\right)=0,\quad t\in(0,T],\ x\in X,\ \lambda\text{-a.e.}\ \phi\in Y,\\
 &\rho(0,\cdot)=\rho_0(\cdot).
\end{split}
  \end{equation}
Note that there exists
 \begin{enumerate}
 \item[$\bullet$] a partition $\{A^m_i\}_{1\le i\le m}$ of $X$ and points $x^m_i\in A^m_i$ for $i=1,\ldots,m$, for every $m\in\N$,
 \item[$\bullet$]  a sequence $\{\varphi^{m,n}_{(i-1)n+j}\colon i=1,\ldots,m,j=1,\ldots,n\}_{n,m\in\N}\subseteq Y$ and $\{a_{m,i}\colon i=1,\ldots,$ $m\}_{m\in\N}\subseteq\R_+$, and
  \item[$\bullet$] a sequence $\{y^{\ell,m,n}_{(i-1)n+j}\colon i=1,\ldots,m,j=1,\ldots,n\}_{m,n\in\N}\subseteq X^2$ and $\{b_{\ell,m,i}\colon i=1,\ldots,$ $m\}_{m\in\N}\subseteq\R_+$, for $\ell=1,2$,
 \end{enumerate}
 such that
  $$\lim_{m\to\infty}\lim_{n\to\infty}d_{\infty}(\nu_0^{m,n},\nu_0)=0,$$
   $$\lim_{m\to\infty}\lim_{n\to\infty}d_{\infty}(\eta^{m,n}_{\ell},\eta_{\ell})=0,\quad \ell=1,2,$$
     $$\lim_{m\to\infty}\int_0^T\int_Y\sup_{x\in X}\lt|h^m(t,x,\phi)-h(t,x,\phi)\rt|\rd\phi\rd t=0,$$
 where
 \begin{equation}
   \label{discretization-LS}
 \begin{split}
\nu_0^{m,n,x}\coloneqq\sum_{i=1}^m\mathbbm{1}_{A^m_i}(x)\frac{a_{m,i}}{n}\sum_{j=1}^n
\delta_{\varphi^{m,n}_{(i-1)n+j}},\quad x\in X,\\ \eta^{m,n,x}_{\ell}\coloneqq\sum_{i=1}^m\mathbbm{1}_{A^m_i}(x)\frac{b_{\ell,m,i}}{n}
\sum_{j=1}^n\delta_{y^{\ell,m,n}_{(i-1)n+j}},\quad x\in X,\quad \ell=1,2,\\
 h^m(t,z,\phi)\coloneqq\sum_{i=1}^m\mathbbm{1}_{A^m_i}(z)h(t,x_i^m,\phi),\quad t\in\cI,\ z\in X,\ \phi\in Y.
 \end{split}
\end{equation}
 Consider the following IVP of a coupled ODE system:
\begin{multline}
  \label{lattice-LS}
  \dot{\phi}_{(i-1)n+j}=F^{m,n}_{i}(t,\phi_{(i-1)n+j},\Phi),\quad 0<t\le T,\quad \phi_{(i-1)n+j}(0)=\varphi^{m,n}_{(i-1)n+j},\\ i=1,\ldots,m,\ j=1,\ldots,n,
\end{multline}
where $\Phi=(\phi_{(i-1)n+j})_{1\le i\le m,1\le j\le n}$ and $\Phi=(\phi_{(i-1)n+j})_{1\le i\le m,1\le j\le n}$ and \begin{align*}
  F^{m,n}_{i}(t,\psi,\Phi)=&\sum_{\ell=1}^2\frac{b_{\ell,m,i}}{n}\sum_{j=1}^n\sum_{p_1=1}^m\frac{a_{m,p_1}}{n}
\mathbbm{1}_{A^m_{p_1}}(y^{\ell,m,n}_{(i-1)n+j,1})\sum_{p_2=1}^m\frac{a_{m,p_2}}{n}
\mathbbm{1}_{A_{p_2}^m}(y^{\ell,m,n}_{(i-1)n+j,2})\\
&\sum_{q_1=1}^{n}\sum_{q_2=1}^ng_{\ell}(t,\psi,\phi_{(p_1-1)n+q_1},\phi_{(p_2-1)n+q_2})+h^m(t,x^m_i,\psi).
\end{align*}
Then by Proposition~\ref{prop-well-posed-lattice}, there exists a unique solution $\phi^{m,n}(t)=(\phi^{m,n}_{(i-1)n+j}(t))_{1\le i\le n,1\le j\le m}$ to \eqref{lattice-LS},
for $m,n\in\N$.

For $t\in\cI$, define \begin{equation}\label{Eq-approx-LS}
\nu_t^{m,n,x}\coloneqq\sum_{i=1}^m\mathbbm{1}_{A^m_i}(x)\frac{a_{m,i}}{n}
\sum_{j=1}^{n}\delta_{\phi^{m,n}_{(i-1)n+j}(t)},\quad x\in X.
\end{equation}

\begin{theorem}
  Assume ($\mathbf{H3.1}$)-($\mathbf{H3.4}$). Additionally assume $\Lambda_1, \Lambda_2$ satisfy \eqref{Lambdacondition}. Then there exists a unique uniformly weak solution $\rho(t,x,\phi)$ to \eqref{Vlasov-LS}. Assume additionally $\rho_0(x,\phi)$ is continuous in $x\in X$ for $\lambda$-a.e. $\phi\in Y$ such that $\rho_0\in {\sf L}^1_+(X\times Y;\mu_X\otimes\lambda)$ and $$\sup_{x\in X}\|\rho_0(x,\cdot)\|_{{\sf L}^1(Y;\lambda)}<\infty.$$ Let $\nu_{\cdot}\in\mathcal{C}(\cI,\mathcal{B}_{*}(X,\cM_{+,\abs}(Y)))$ be the measure-valued function defined in terms of the uniformly weak solution to \eqref{Vlasov-LS}:
$$\rd\nu_t^x=\rho(t,x,\phi)\rd\phi,\quad \text{for every}\quad t\in\cI,\quad \text{and}\quad x\in X.$$ Then $\nu_t\in \mathcal{C}_{*}(X,\cM_+(Y))$, for all $t\in\cI$, provided $\nu_0\in\mathcal{C}_{*}(X,\cM_+(Y))$. Moreover, let
$\nu^{m,n}_0$, $\eta^{\ell,m,n}$ for $\ell=1,2$, and $h^m$ be defined in \eqref{discretization-LS},
and $\nu^{m,n}_{t}$ be defined in \eqref{Eq-approx-LS}. Then $$\lim_{n\to\infty}d_{0}(\nu_{\cdot}^{m,n},\nu_{\cdot})=0.$$
\end{theorem}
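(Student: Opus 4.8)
The plan is to deduce this theorem directly from the general well-posedness result, Theorem~\ref{thm-well-posedness}, and the general approximation result, Theorem~\ref{th-approx}, so that the only real work is to check that the structural hypotheses $(\mathbf{H3.1})$--$(\mathbf{H3.4})$ together with the threshold condition \eqref{Lambdacondition} imply the abstract assumptions $(\mathbf{A1})$--$(\mathbf{A4})$, $(\mathbf{A4})'$, $(\mathbf{A6})$ and $(\mathbf{A7})$. Most of these implications I expect to be routine. Indeed, $(\mathbf{H3.1})$ is exactly $(\mathbf{A1})$; the coupling maps $g_1,g_2$ are autonomous and obtained by composing the dispersal functions $W_{i,j}$ with affine functions of $(\phi,\psi_1,\psi_2)$, so their local Lipschitz continuity in the state variables (hence $(\mathbf{A2})$) follows from the local Lipschitz continuity of $W_{i,j}$ in $(\mathbf{H3.2})$; the drift $h(\phi)$ is a fixed quadratic vector field, hence continuous and locally Lipschitz in $\phi$ and bounded on the compact rectangle $Y$, which gives $(\mathbf{A3})$ and $(\mathbf{A7})$ at once; and $(\mathbf{H3.3})$ supplies $(\mathbf{A4})$, with the continuity of $\eta_1,\eta_2$ providing $(\mathbf{A4})'$ wherever the approximation step needs it.

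The heart of the argument is verifying the positive-invariance assumption $(\mathbf{A6})$ for $Y=\{\phi\in\R^2_+\colon\phi_1\le\Lambda_1,\ \phi_2\le\Lambda_2\}$, that is, $V[\eta,\nu_{\cdot},h](t,x,\phi)\cdot\upsilon(\phi)\le0$ on each of the four edges of $\partial Y$, where $V$ is given in \eqref{Eq-VOperator} and $\upsilon$ is the outward normal. The key elementary consequence I would extract from $(\mathbf{H3.2})$ is that, since each $W_{i,j}$ is odd and satisfies $0\le W_{i,j}(u)\le u$ for $u\ge0$, one has $W_{i,j}(u)\ge0$ for $u\ge0$ and $W_{i,j}(u)\le0$ for $u\le0$. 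On the two lower edges $\{\phi_1=0\}$ and $\{\phi_2=0\}$, with outward normals $(-1,0)$ and $(0,-1)$, the reaction components $h_1=\phi_1(\cdots)$ and $h_2=\phi_2(\cdots)$ vanish, and since $\nu_t$ is supported in $Y\subseteq\R^2_+$ by $(\mathbf{H3.4})$ the dispersal arguments are nonnegative, so the relevant component of $V$ is nonnegative and $V\cdot\upsilon\le0$. On the upper edge $\{\phi_1=\Lambda_1\}$ the bound $\Lambda_1\ge\alpha/\beta$ from \eqref{Lambdacondition} forces $h_1=\Lambda_1(\alpha-\beta\Lambda_1-\gamma\phi_2)\le0$, while every dispersal argument has the form $(\text{species-}1\text{ density in }Y)-\Lambda_1\le0$, so $W_{1,j}(\cdot)\le0$ and hence $V_1\le0$. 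Symmetrically, on $\{\phi_2=\Lambda_2\}$ the bound $\theta\Lambda_2\ge-\iota+\sigma\Lambda_1$ gives $h_2=\Lambda_2(-\iota+\sigma\phi_1-\theta\Lambda_2)\le\Lambda_2(-\iota+\sigma\Lambda_1-\theta\Lambda_2)\le0$ and the dispersal arguments $(\text{species-}2\text{ density in }Y)-\Lambda_2\le0$ give $W_{2,j}(\cdot)\le0$, so $V_2\le0$. This establishes $(\mathbf{A6})$.

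With all abstract assumptions in force, I would then invoke Theorem~\ref{thm-well-posedness} to obtain existence and uniqueness of the uniformly weak solution $\rho$ and the associated measure-valued function $\nu_{\cdot}\in\mathcal{C}(\cI,\mathcal{B}_*(X,\cM_{+,\abs}(Y)))$, with $\nu_t\in\mathcal{C}_*(X,\cM_+(Y))$ propagated from $\nu_0\in\mathcal{C}_*(X,\cM_+(Y))$ by the continuity statement in Proposition~\ref{prop-sol-fixedpoint}; and I would invoke Theorem~\ref{th-approx}, specialized to the present $g_\ell$, $h$ and $\eta_\ell$, to conclude $\lim_{n\to\infty}d_0(\nu^{m,n}_{\cdot},\nu_{\cdot})=0$. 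The main obstacle I anticipate is precisely the invariance check on the two upper edges: it is there that the logistic reaction terms and the dispersal terms must be made to point inward simultaneously, and this is exactly what the two-sided bound $0\le W_{i,j}(u)\le u$ together with the threshold inequalities \eqref{Lambdacondition} are tailored to guarantee. A secondary point demanding care is bookkeeping the coordinates of $\psi_1,\psi_2$ in $g_1,g_2$ so that on the edge $\{\phi_i=\Lambda_i\}$ the dispersal arguments indeed compare like species and are therefore bounded by $\Lambda_i$; this consistency is what makes the sign of $W_{i,j}(\cdot)$ come out correctly.
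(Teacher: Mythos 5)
Your proposal is correct and follows essentially the same route as the paper: the routine verification that $(\mathbf{H3.1})$--$(\mathbf{H3.3})$ yield $(\mathbf{A1})$, $(\mathbf{A2})$, $(\mathbf{A3})$, $(\mathbf{A4})'$ and $(\mathbf{A7})$ (the latter two via compactness of $Y$), followed by the edge-by-edge check of $(\mathbf{A6})$ on the four faces of the rectangle, using the sign information $W_{i,j}(u)\ge 0$ for $u\ge 0$ and $W_{i,j}(u)\le 0$ for $u\le 0$ (from oddness and $0\le W_{i,j}(u)\le u$) together with \eqref{Lambdacondition}, and finally an appeal to Theorem~\ref{thm-well-posedness} and Theorem~\ref{th-approx}. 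Your like-species bookkeeping of the dispersal arguments on the upper edges matches exactly the inequalities $W_{1,j}(\psi-\Lambda_1)\le 0$ and $W_{2,j}(\psi-\Lambda_2)\le 0$ used in the paper's cases (iii) and (iv).
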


\begin{proof}
 First note that ($\mathbf{H3.1}$) implies ($\mathbf{A1}$) and ($\mathbf{H3.2}$) implies ($\mathbf{A2}$). It is readily verified that  ($\mathbf{A3}$) and  ($\mathbf{A7}$) are fulfilled since $Y$ is compact. In addition, $\mathbf{(A4)'}$ follows from ($\mathbf{H3.3}$). Hence it suffices to show that ($\mathbf{A6}$) holds with $Y$ for some $c,\Lambda>0$.

  Note that $\partial Y=\{\phi_1=0\}\cup\{\phi_2=0\}\cup\{\phi_1=\Lambda_1\}\cup\{\phi_2=\Lambda_2\}$. In the following, we will show that \[V[\eta,\nu_{\cdot},h](t,x,\phi)\cdot\upsilon(\phi)\le0,\quad \text{for all}\ t\in\mathcal{T},\ x\in X,\quad \phi\in\partial Y,\]
  where $\upsilon(\phi)$ is the outer normal vector at $\phi$. We prove it case by case.

\begin{enumerate}
\item[(i)] For $\phi\in\{\varphi\in Y\colon \varphi_1=0\}$, $\upsilon(\phi)=(-1,0)$, and
\begin{align*}
  &V[\eta,\nu_{\cdot},h](t,x,\phi)\cdot\upsilon(\phi)\\
  =&-\int_X\int_X\int_Y\int_Y\left(W_{1,1}(\psi_1-\phi_1)+W_{1,2}(\psi_2-\phi_1)\right)\rd\nu_t^{y_1}(\phi_1)
  \rd\nu_t^{y_2}(\phi_2)\rd\eta_1^x(y_1,y_2)\le0.
\end{align*}
\item[(ii)] For $\phi\in\{\varphi\colon \varphi_2=0\}$, $\upsilon(\phi)=(0,-1)$, and
\begin{align*}
  &V[\eta,\nu_{\cdot},h](t,x,\phi)\cdot\upsilon(\phi)\\
  =&-\int_X\int_X\int_Y\int_Y\left(W_{2,1}(\psi_1-\phi_2)+W_{2,2}(\psi_2-\phi_2)\right)\rd\nu_t^{y_1}(\phi_1)
  \rd\nu_t^{y_2}(\phi_2)\rd\eta^x_2(y_1,y_2)\le0.
\end{align*}
\item[(iii)] For $\phi\in\{\varphi\colon \varphi_1=\Lambda_1\}$, $\upsilon(\phi)=(1,0)$. By \eqref{Lambdacondition},
\begin{align*}
  &V[\eta,\nu_{\cdot},h](t,x,\phi)\cdot\upsilon(\phi)\\
  =&\Lambda_1(\alpha-\beta\Lambda_1-\gamma\phi_2)\\
  &+\int_X\int_X\int_Y\int_Y\left(W_{1,1}(\psi_1-\Lambda_1)+W_{1,2}(\psi_2-\Lambda_1)\right)\rd\nu_t^{y_1}
  (\phi_1)\rd\nu_t^{y_2}(\phi_2)\rd\eta^x_1(y_1,y_2)\\
  \le&\Lambda_1(\alpha-\beta\Lambda_1)\le0,
\end{align*}
since $\Lambda_1\ge\frac{\alpha}{\beta}$, due to \eqref{Lambdacondition}.
\item[(iv)] For $\phi\in\{\varphi\colon \varphi_2=\Lambda_2\}$, $\upsilon(\phi)=(1,0)$. By \eqref{Lambdacondition}, and $\psi_2\le\Lambda_2$ for $\psi\in Y$, we have
\begin{align*}
  &V[\eta,\nu_{\cdot},h](t,x,\phi)\cdot\upsilon(\phi)\\
  =&\Lambda_2(-\iota+\sigma\phi_1-\theta\Lambda_2)\\
  &+\int_X\int_X\int_Y\int_Y\left(W_{2,1}(\psi_1-\Lambda_2)+W_{2,2}
  (\psi_2-\Lambda_2)\right)\rd\nu_t^{y_1}(\psi_1)\nu_t^{y_2}(\psi_2)
  \rd\eta^x_2(y_1,y_2)\\
  \le&\Lambda_2(-\iota+\sigma\Lambda_1-\theta\Lambda_2)\le0,
\end{align*}
since by \eqref{Lambdacondition}, we have $\Lambda_2\ge-\frac{\iota}{\theta}+\frac{\sigma}{\theta}\Lambda_1$.
\end{enumerate}
\end{proof}

\section{Discussion}
In this paper we regard directed hypergraph limits as {linear combinations of elements in $\cB(X,\cM_+(X^{\ell-1}))$ for  $\ell\in J\subseteq\N\setminus\{1\}$.} The motivation of doing such a work comes from the emergent demanding applications from networks models of higher-order interactions \cite{BGHS22}. We extend the idea proposed in \cite{KX21}  from \emph{directed graph measures} (DGM) to \emph{directed hyper-graph measures} (HDGM). We apply our main results to {investigate mean field limits of}  Kuramoto networks of higher-order interactions in physics as well as models in epidemiology and ecology. {Nevertheless, our results cannot directly apply to models of higher-order interactions which do \emph{not} admit a \emph{compact} positively invariant set and hence the trajectories are not ultimately bounded \cite{B20}. Hence it would be desirable to consider extension of our results to those models. In addition, our results thus far only apply to the case where the hypergraph measure is a limit of the sequence of hypergraphs of \emph{uniformly bounded} ranks. It will be challenging while exciting to study the case e.g., where the hypergraph measure is a limit of the sequence of hypergraphs of \emph{unbounded} ranks, e.g., a sequence of simpicial complexes with growing ranks.} 

{From graph theoretic perspective, there is room to improve our definition of hypergraph limits to fit in well with the cut metric naturally measuring the distance between hypergraphs, as well as identifying the difference set of DHMGs and the hypergraph limit as a sequence of hypergraphs convergent in cut metric \cite{ES12,L12}.} 

\section{Proof of Theorem~\ref{th-approx}}\label{sect-proof}
\begin{proof}
First, the proof of $\nu_{\cdot}\in \mathcal{C}(\cI,\mathcal{C}_{*}(X,\cM_+(Y)))$ follows directly from that of \cite[Theorem~5.15]{KX21}. The strategy based on four steps to prove the approximation result is identical to that given in the proof of \cite[Theorem~5.15]{KX21}. Steps II-IV among the four steps are independent of $\widehat{V}$. We essentially need to verify that $\nu^{m,n}_{\cdot}$ is the unique solution to  the fixed point equation associated with $\eta^{m,n}$ and $h^m$: \begin{equation*}\nu^{m,n}_{\cdot}
=\cA[\eta^{m,n},h^m]\nu^{m,n}_{\cdot}.\end{equation*} In the light of Step I in the proof of \cite[Theorem~5.15]{KX21}, it suffices to express the Vlasov operator in the discrete context and show it is consistent with the definition of $F_i^{m,n}$.

Denote $\eta^{m,n}=(\eta^{m,n}_{\ell})_{\ell\in J}$. For $x\in I^N_i$, $t\in\cI$, by \eqref{discretization} and \eqref{Eq-approx}, we have
\begin{align*}
  &V^{m,n}[\eta^{m,n},\nu^{m,n}_{\cdot},h^m](t,x,\phi)\\
=&\sum_{\ell\in J}\int_{X^{\ell-1}}\underbrace{\int_{\R^{r_2}}\ldots\int_{\R^{r_2}}}_{\ell-1}
g_{\ell}(t,\phi,\psi_1,\ldots,\psi_{\ell-1})
\rd\nu_t^{m,n,y_{\ell-1}}(\psi_{\ell-1})\\
&\cdots\rd\nu_t^{m,n,y_{1}}(\psi_{1})\cdot\rd\eta_{\ell}^{m,n,x}(y_1,\ldots,y_{\ell-1})+h^m(t,x,\phi),\\
=&\sum_{\ell\in J}\frac{b_{\ell,m,i}}{n}\sum_{j=1}^n\underbrace{\int_{\R^{r_2}}\ldots\int_{\R^{r_2}}}_{\ell-1}
g_{\ell}(t,\phi,\psi_1,\ldots,\psi_{\ell-1})
\\
&\cdot\rd\nu_t^{m,n,y^{\ell,m,n}_{(i-1)n+j,\ell-1}}(\psi_{\ell-1})
\cdots\rd\nu_t^{m,n,y^{\ell,m,n}_{(i-1)n+j,1}}(\psi_{1})+h^m(t,x_i^m,\phi)\\
=&\sum_{\ell\in J}\frac{b_{\ell,m,i}}{n}\sum_{j=1}^n\sum_{p_1=1}^m\frac{a_{m,p_1}}{n}\mathbbm{1}_{A^m_{p_1}}
(y^{\ell,m,n}_{(i-1)n+j,1})
\cdots\sum_{p_{\ell-1}=1}^m\frac{a_{m,p_{\ell-1}}}{n}\\&\mathbbm{1}_{A^m_{p_{\ell-1}}}
(y^{\ell,m,n}_{(i-1)n+j,\ell-1})\sum_{q_1=1}^n\cdots\sum_{q_{\ell-1}=1}^n\\
&g_{\ell}(t,\phi,\phi^{m,n}_{(p_{1}-1)n+q_{1}},\ldots,\phi^{m,n}_{(p_{\ell-1}-1)n+q_{\ell-1}})
+h^m(t,x_i^m,\phi)\\
=&F^{m,n}_i(t,\phi,\Phi^{m,n}(t)).
\end{align*}
\end{proof}

\section*{Acknowledgements}
Both authors thank (1) Giulio Zucal for bringing \cite{Bom07} to their attention; (2) Christian Bick, Tobias B\"{o}hle, Raffaella Mulas, Davide Sclosa, and Giulio Zucal (i.e., the other members of the ``H-Team'') for helpful discussions on hypergraph limits and networks of higher-order interactions; (3) an anonymous referee for pointing out that there should be a $k!$ in the expression of the degree of a directed hypergraph owing to the distinction from an undirected hypergraph. CK acknowledges support via a Lichtenberg Professorship of the VolkswagenStiftung. CX acknowledges the support from TUM University Foundation, the Alexander von Humboldt Foundation, and the Simons Foundation. 

\subsection*{Data availibility statement} Data sharing not applicable to this article as no datasets were generated or analysed
during the current study.

\section*{Declarations}
\subsection*{Conflict of interest} Not applicable.

\bibliographystyle{plain}
\bibliography{references}
\appendix
\section{Proof of Proposition~\ref{prop-continuousdependence}}\label{appendix-prop-continuousdependence}
\begin{proof}
We will suppress the variables in $V[\eta,\nu_{\cdot},h](t,x,\psi)$ and $\Phi^x_{s,t}[\eta,\nu_{\cdot},h]$ whenever they are clear and not the emphasis from the context. The proof of the absolute continuity property is analogous to that of \cite[Proposition~4.4(iv)]{KX21}.

In the following, we prove the rest three continuous dependence properties item by item.

The properties of $\cA$ follows from that of $\Phi^x_{0,t}[\eta,\nu_{\cdot},h]$. Hence in the following, we will first establish corresponding continuity and Lipschitz continuity for $\Phi^x_{0,t}[\eta,\nu_{\cdot},h]$ and then apply the results to derive respective properties for $\cA$.

\begin{enumerate}
  \item[(i)]
  Continuity in $t$. Indeed,
\begin{align*}
  &d_{\infty}(\cA[\eta,h]\nu_{t},\cA[\eta,h]\nu_{s})\\
  =&\sup_{x\in X}d_{\sf BL}(\Phi^x_{t,0}[\eta,\nu_{\cdot},h]\#\nu_0^x,\Phi^x_{s,0}[\eta,\nu_{\cdot},h]\#\nu_0^x)\\
  =&\sup_{x\in X}\sup_{f\in\mathcal{BL}_1(Y)}\lt|\int_Y\lt(f\circ \Phi^x_{t,0}[\eta,\nu_{\cdot},h]\phi-f\circ \Phi^x_{s,0}[\eta,\nu_{\cdot},h]\phi\rt)\rd\nu_0^x(\phi)\rt|\\
  \le&\sup_{x\in X}\int_Y\lt|\Phi^x_{t,0}[\eta,\nu_{\cdot},h]\phi-\Phi^x_{s,0}[\eta,\nu_{\cdot},h]\phi\rt|
  \rd\nu_0^x(\phi)\\
  =&\sup_{x\in X}\int_Y
  \Bigl|\int_s^t\Bigl(h(\tau,x,\Phi^x_{\tau,0}[\eta,\nu_{\cdot},h]\phi)\\
  &+\sum_{\ell\in J}\int_{X^{\ell-1}}\underbrace{\int_{Y}\cdots\int_{Y}}_{\ell-1}g_{\ell}(\tau,
  \Phi^x_{\tau,0}[\eta,\nu_{\cdot},h]\phi,\psi_1,\psi_2,\ldots,\psi_{\ell-1})
    \rd\nu^{y_{\ell-1}}_{\tau}(\psi_{\ell-1})\cdots\rd\nu^{y_1}_t(\psi_1)\\
    &\rd\eta^{x}_{\ell}(y_1,\ldots,y_{\ell-1})\Bigr)\rd\tau\Bigr|\rd\nu_0^x(\phi)\\
  \le&\sup_{x\in X}\int_Y\int_s^t\Bigl(\sum_{\ell\in J}\int_{X^{\ell-1}}\int_{Y}\cdots\int_Y\|g_{\ell}\|_{\infty}
  \rd\nu_{\tau}^{y_{\ell-1}}
  \cdots\rd\nu_{\tau}^{y_1}\rd\eta^x_{\ell}(y_1,\ldots,y_{\ell-1})+\|h\|_{\infty,\cI}\Bigr)\rd\tau
  \rd\nu_0^x(\phi)\\
  \le&\|\nu_{\cdot}\|\Bigl(\sum_{\ell\in J}\|\nu_{\cdot}\|^{\ell-1}\|g_{\ell}\|_{\infty}\|\eta_{\ell}\|
  +\|h\|_{\infty,\cI}\Bigr)|t-s|\\
  \le&L_1\|\nu_{\cdot}\||t-s|\to0,\quad \text{as}\quad |s-t|\to0,
\end{align*}
where \begin{equation*}
L_1\coloneqq L_1(\nu_{\cdot})=\BL(h)+\sum_{\ell\in J}\BL(g_{\ell})\|\eta_{\ell}\|\|\nu_{\cdot}\|^{\ell-1},
\end{equation*}
$\BL(h)=\sup_{t\in\cI}\sup_{x\in X}\BL(h(t,x,\cdot))$ and $\BL(g_{\ell})=\sup_{t\in\cI}\sup_{x\in X}\BL(g_{\ell}(t,x,\cdot))$ for $\ell\in J$.

This shows that $t\mapsto\cA[\eta,h]\nu_{t}\in \mathcal{C}(\cI,\mathcal{B}_{*}(X,\cM_+(Y)))$.

The proof of the mass conservation law is the same as that of \cite[Proposition~4.4]{KX21}.
\item[(ii)] Next, we show $\cA^x[\eta,h]\nu_{\cdot}$ is Lipschitz continuous in $\nu_{\cdot}$. We first prove Lipschitz continuity of $\Phi_{t,0}^x\phi$ in the initial condition $\phi$. Note that
   \begin{align}
    \nonumber &|V[\eta,\nu_{\cdot},h](t,x,\phi_1)-V[\eta,\nu_{\cdot},h](t,x,\phi_2)|\\
   \nonumber  \le&\sum_{\ell\in J}\int_{X^{\ell-1}}\underbrace{\int_{Y}\ldots\int_{Y}}_{\ell-1}
     |g_{\ell}(t,\phi_1,\psi_1,\ldots,\psi_{\ell-1})-g_{\ell}(t,\phi_2,\psi_1,\ldots,\psi_{\ell-1})|
     \rd\nu_t^{y_{\ell-1}}(\psi_{\ell-1})\\
\nonumber&\cdots\rd\nu_t^{y_{1}}(\psi_{1})\cdot\rd\eta^x_{\ell}(y_1,\ldots,y_{\ell-1})+|h(t,x,\phi_1)-
h(t,x,\phi_2)|\\
\label{V-Lip}\le&L_1(\nu_{\cdot})|\phi_1-\phi_2|,\quad t\in\cI,
   \end{align}
   This yields
\begin{align*}
    &|\Phi_{t,0}^x\phi_1(x)-\Phi_{t,0}^x\phi_2(x)|\\
    \le&|\phi_1(x)-\phi_2(x)|+\int_0^t|V[\eta,\nu_{\cdot},h](\tau,x,\Phi_{\tau,0}^x\phi_1(x))-
    V[\eta,\nu_{\cdot},h](\tau,x,\Phi_{\tau,0}^x\phi_2(x))|\rd\tau\\
    \le&|\phi_1(x)-\phi_2(x)|+L_1(\nu_{\cdot})
    \int_0^t|\Phi_{\tau,0}^x\phi_1(x)-\Phi_{\tau,0}^x\phi_2(x)|\rd\tau.
    \end{align*}
    By Gronwall's inequality,
    \begin{equation}\label{Lip-phi}|\Phi_{t,0}^x\phi_1(x)-\Phi_{t,0}^x\phi_2(x)|\le \textnormal{e}^{L_1(\nu_{\cdot})t}|\phi_1(x)-\phi_2(x)|,\end{equation}

Similarly, one can also show that
\begin{equation*}|\Phi_{0,t}^x\phi_1(x)-\Phi_{0,t}^x\phi_2(x)|\le \textnormal{e}^{L_1(\nu_{\cdot})t}|\phi_1(x)-\phi_2(x)|.\end{equation*}

Now we are ready to show $\cA^x[\eta,h]\nu_{\cdot}$ is Lipschitz continuous in $\nu_{\cdot}$. Observe that
\begin{align}
\nonumber&d_{\sf BL}(\Phi_{t,0}^x[\nu^{1}_{\cdot}]_{\#}\nu_0^{1,x},\Phi_{t,0}^x[\nu^{2}_{\cdot}]_{\#}\nu_0^{2,x})\\
\label{sum}\le&d_{\sf BL}(\Phi_{t,0}^x[\nu^{1}_{\cdot}]_{\#}\nu_0^{1,x},\Phi_{t,0}^x[\nu^{2}_{\cdot}]_{\#}\nu_0^{1,x})+d_{\sf BL}(\Phi_{t,0}^x[\nu^{2}_{\cdot}]_{\#}\nu_0^{1,x},\Phi_{t,0}^x[\nu^{2}_{\cdot}]_{\#}\nu_0^{2,x}).
\end{align}
Note that
\begin{align}
 \nonumber&|V[\nu^1_{\cdot}](t,x,\phi)-V[\nu^2_{\cdot}](t,x,\phi)|\\
\nonumber\le&\sum_{\ell\in J}\int_{X^{\ell-1}}\Bigl|\int_{Y^{\ell-1}}
     g_{\ell}(t,\phi,\psi_1,\ldots,\psi_{\ell-1})\rd(\otimes_{j=1}^{\ell-1}\nu_t^{1,y_j}(\psi_j)
     -\otimes_{j=1}^{\ell-1}\nu_t^{2,y_j}(\psi_j))\Bigr|\\
\nonumber&\cdot\rd\eta^x_{\ell}(y_1,\ldots,y_{\ell-1})\\
\nonumber=&\sum_{\ell\in J}\int_{X^{\ell-1}}\Bigl|\int_{Y^{\ell-1}}
     g_{\ell}(t,\phi,\psi_1,\ldots,\psi_{\ell-1})\rd(\nu_t^{1,y_1}(\psi_1)\otimes\nu_t^{1,y_2}(\psi_2)
     \cdots\otimes\nu_t^{1,y_{\ell-1}}(\psi_{\ell-1})\\
\nonumber&-\nu_t^{2,y_1}(\psi_1)\otimes\nu_t^{1,y_2}(\psi_2)
     \cdots\otimes\nu_t^{1,y_{\ell-1}}(\psi_{\ell-1})+\nu_t^{2,y_1}(\psi_1)\otimes\nu_t^{1,y_2}(\psi_2)
     \cdots\otimes\nu_t^{1,y_{\ell-1}}(\psi_{\ell-1})\\
\nonumber&-\nu_t^{2,y_1}(\psi_1)\otimes\nu_t^{2,y_2}(\psi_2)\otimes\nu_t^{1,y_3}(\psi_3)
     \cdots\otimes\nu_t^{1,y_{\ell-1}}(\psi_{\ell-1})+\cdots\\
\nonumber&+\nu_t^{2,y_1}(\psi_1)\otimes\nu_t^{2,y_2}(\psi_2)
     \cdots\otimes\nu_t^{2,y_{\ell-2}}(\psi_{\ell-2})\otimes\nu_t^{1,y_{\ell-1}}(\psi_{\ell-1})
     -\otimes_{j=1}^{\ell-1}\nu_t^{2,y_j}(\psi_j))\Bigr|\\
\nonumber&\cdot\rd\eta^x_{\ell}(y_1,\ldots,y_{\ell-1})\\
\nonumber\le&L_2d_{\infty}(\nu^1_{t},\nu^2_{t}),\quad t\in\cI,
\end{align}
where $L_2=L_2(\eta,\nu_{\cdot}^1,\nu_{\cdot}^2)\coloneqq\begin{cases}\sum_{\ell\in J}\|\eta_{\ell}\|\BL(g_{\ell})
\sum_{i=1}^{\ell-2}\|\nu_{\cdot}^1\|^i\|\nu_{\cdot}^2\|^{\ell-2-i},& \text{if}\quad \ell>2,\\ \sum_{\ell\in J}\|\eta_{\ell}\|\BL(g_{\ell}),& \text{if}\quad \ell=2.\end{cases}$ We now estimate the first term.
\begin{align}
\nonumber&d_{\sf BL}(\Phi^x_{t,0}[\nu^1_{\cdot}]_{\#}\nu_0^{1,x},\Phi^x_{t,0}[\nu^2_{\cdot}]_{\#}\nu_0^{1,x})\\
\nonumber=&\sup_{f\in\mathcal{BL}_1(Y)}\int_{Y} f(\phi)\mathrm{d}(\Phi^x_{t,0}[\nu^1_{\cdot}]_{\#}\nu_0^{1,x}-\Phi^x_{t,0}[\nu^1_{\cdot}]_{\#}\nu_0^{2,x})\\
\nonumber=&\sup_{f\in\mathcal{BL}_1(Y)}\int_{Y} ((f\circ \Phi^x_{t,0}[\nu^1_{\cdot}])(\phi)-(f\circ \Phi^x_{t,0}[\nu^2_{\cdot}])(\phi))\mathrm{d}\nu_0^{1,x}(\phi)\\
\nonumber\le&\int_{Y}\lt|\Phi^x_{t,0}[\nu^1_{\cdot}](\phi)- \Phi^x_{t,0}[\nu^2_{\cdot}](\phi)\rt|\mathrm{d}\nu_0^{1,x}(\phi)\eqqcolon\lambda_x(t)\\
\nonumber\le&\int_{Y}\int_0^t\lt|V[\nu^1_{\cdot}](\tau,x,\phi)-V[\nu^2_{\cdot}](\tau,x,\phi)\rt|\rd\tau
\mathrm{d}\nu_0^{1,x}(\phi)\\
\label{term-1}\le&L_2(\nu_{\cdot}^1,\nu_{\cdot}^2)\nu_0^{1,x}(Y)\int_0^td_{\infty}
(\nu^1_{\tau},\nu^2_{\tau})\rd\tau.
\end{align}

Next, we estimate the second term. For $f\in\mathcal{BL}_1(Y)$, from \eqref{Lip-phi} it follows that
\[\mathcal{L}(f\circ \Phi^x_{t,0}[\nu^2_{\cdot}])\le \mathcal{L}(f)\mathcal{L}(\Phi^x_{t,0}[\nu^2_{\cdot}])
\le\mathcal{L}(f)\textnormal{e}^{L_1(\nu^2_{\cdot})t},\quad \|f\circ \Phi^x_{t,0}[\nu^2_{\cdot}]\|_{\infty}\le\|f\|_{\infty}.\]
Hence \begin{equation}
  \label{Eq-A}
\BL(f\circ \Phi^x_{t,0})\le \textnormal{e}^{L_1(\nu^2_{\cdot})t}.
\end{equation} For every $x\in X$,
\begin{alignat}{2}
 \nonumber &d_{\sf BL}(\Phi^x_{t,0}[\nu^1_{\cdot}]_{\#}\nu_0^{1,x},\Phi^x_{t,0}[\nu^2_{\cdot}]_{\#}\nu_0^{1,x})\\
  \nonumber=&\sup_{f\in\mathcal{BL}_1(Y)}\int_Y(f\circ \Phi^x_{t,0}[\nu^2_{\cdot}])(\phi)\rd(\nu_0^{1,x}(\phi)-\nu_0^{2,x}(\phi))\\
 \label{term-2} \le&\textnormal{e}^{L_1(\nu^2_{\cdot})t}d_{\sf BL}(\nu_0^{1,x},\nu^{2,x}_0)\le \textnormal{e}^{L_1(\nu^2_{\cdot})t}d_{\infty}(\nu^1_0,\nu^2_0).
\end{alignat}
Combining \eqref{term-1} and \eqref{term-2}, it follows from \eqref{sum} that
\begin{align}
\nonumber d_{\infty}(\cA[\eta,h]\nu^1_{t},\cA[\eta,h]\nu^2_{t})
=&\sup_{x\in X}d_{\sf BL}(\Phi^x_{t,0}[\nu^1_{\cdot}]_{\#}\nu_0^{1,x},\Phi^x_{t,0}[\nu^2_{\cdot}]_{\#}\nu_0^{1,x})\\
 \nonumber\le&\textnormal{e}^{L_1(\nu^2_{\cdot})t}d_{\infty}(\nu^1_0,\nu^2_0)+ \|\nu^1_{\cdot}\|L_2(\eta,\nu_{\cdot}^1,\nu_{\cdot}^2)\textnormal{e}^{L_1(\nu^2_{\cdot})t}
 \int_0^td_{\infty}(\nu^1_{\tau},\nu^2_{\tau})
  \textnormal{e}^{-L_1(\nu^2_{\cdot})\tau}\rd\tau.
\end{align}

\item[(iii)] Lipschitz continuity of $\cA[\eta,h]$ in $h$.

We first need to establish the Lipschitz continuity for $\Phi^x_{s,t}[h]$ on $h$. Note that
\begin{align*}
  &|\Phi^x_{t,0}[h_1]\phi-\Phi^x_{t,0}[h_2]\phi|\\
  \le&\int_0^t|h_1(\tau,x,\Phi^x_{\tau,0}[h_1]\phi)-h_2(\tau,x,\Phi^x_{\tau,0}[h_2]\phi)|
  \rd\tau\\
  &+\int_0^t\sum_{\ell\in J}\int_{X^{\ell-1}}\int_{Y}\cdots\int_{Y}
  \Bigl|g_{\ell}(\tau,\Phi^x_{0,\tau}[h_1]\phi,\psi_1,\psi_2,\ldots,\psi_{\ell-1})\\
  &-g_{\ell}(\tau,\Phi^x_{0,\tau}[h_2]\phi,\psi_1,\psi_2,\ldots,\psi_{\ell-1})\Bigr|
  \rd\nu^{y_{\ell-1}}_{\tau}(\psi_{\ell-1})\cdots\rd\nu^{y_1}_{\tau}(\psi_1)\rd\eta^{x}_{\ell}
  (y_1,\ldots,y_{\ell-1})\rd\tau\\
  \le&\int_0^t|h_1(\tau,x,\Phi^x_{\tau,0}[h_1]\phi)-h_2(\tau,x,\Phi^x_{\tau,0}[h_1]\phi)|
  \rd\tau
  +\int_0^t|h_2(\tau,x,\Phi^x_{\tau,0}[h_1]\phi)-h_2(\tau,x,\Phi^x_{\tau,0}[h_2]\phi)|
  \rd\tau\\
  &+\int_0^t\sum_{\ell\in J}\int_{X^{\ell-1}}\int_{Y}\cdots\int_{Y}
  \Bigl|g_{\ell}(\tau,\Phi^x_{\tau,0}[h_1]\phi,\psi_1,\psi_2,\ldots,\psi_{\ell-1})
  -g_{\ell}(\tau,\Phi^x_{\tau,0}[h_2]\phi,\psi_1,\psi_2,\ldots,\psi_{\ell-1})\Bigr|\\
  &\rd\nu^{y_{\ell-1}}_{\tau}(\psi_{\ell-1})\cdots\rd\nu^{y_1}_{\tau}(\psi_1)\rd\eta^{x}_{\ell}
  (y_1,\ldots,y_{\ell-1})\rd\tau\\
  \le&T\|h_1-h_2\|_{\infty,\cI}+(\Lip(h_2)+\sum_{\ell\in J}\Lip(g_{\ell})\|\eta_{\ell}\|\|\nu_{\cdot}\|^
  {\ell-1})\int_0^t
  |\Phi^x_{\tau,0}[h_1]\phi-\Phi^x_{\tau,0}[h_2]\phi|\rd\tau\\
  \le&T\|h_1-h_2\|_{\infty,\cI}+L_1(\nu_{\cdot})\int_0^t
  |\Phi^x_{\tau,0}[h_1]\phi-\Phi^x_{\tau,0}[h_2]\phi|\rd\tau.
\end{align*}
By Gronwall's inequality,
\begin{align*}
  |\Phi^x_{t,0}[h_1]\phi-\Phi^x_{t,0}[h_2]\phi|\le&T\e^{L_1(\nu_{\cdot})t}\|h_1-h_2\|_{\infty,\cI}.
\end{align*}
This shows that
\begin{align*}
  &d_{\infty}(\cA[\eta,h_1]\nu_{t},\cA[\eta,h_2]\nu_{t})\\
  =&\sup_{x\in X}d_{\sf BL}(\Phi^x_{t,0}[\eta,\nu_{\cdot},h_1]\#\nu_0^{x},\Phi^x_{t,0}[\eta,\nu_{\cdot},h_2]\#\nu_0^{x})\\
  =&\sup_{x\in X}\sup_{f\in\mathcal{BL}_1(Y)}\lt|\int_Y\lt(f\circ \Phi^x_{t,0}[\eta,\nu_{\cdot},h_1]\phi-f\circ \Phi^x_{t,0}[\eta,\nu_{\cdot},h_2]\phi\rt)\rd\nu_0^x(\phi)\rt|\\
  \le&\int_Y|\Phi^x_{t,0}[\eta,\nu_{\cdot},h_1]\phi-\Phi^x_{t,0}[\eta,\nu_{\cdot},h_2]\phi|
  \rd\nu_0^x(\phi)\\
  \le&L_3\|h_1-h_2\|_{\infty,\cI},
\end{align*}
where \begin{equation}
  \label{Eq-L3}
L_3=L_3(\nu_{\cdot})\coloneqq T\e^{L_1(\nu_{\cdot})T}\|\nu_{\cdot}\|.\end{equation}
\end{enumerate}
\end{proof}

\section{Proof of Proposition~\ref{prop-sol-fixedpoint}}\label{appendix-prop-sol-fixedpoint}
\begin{proof}
The unique existence of solutions to the fixed point equation \eqref{Fixed} is proved by the Banach fixed point theorem, which is analogous to that of \cite[Proposition~{4.5}]{KX21}.
\begin{enumerate}
\item[$\bullet$]
Continuity in $t$. It follows directly from Proposition~\ref{prop-continuousdependence}(i).
\item[$\bullet$] Continuity in $x$. Assume $\nu_{\cdot}\in \mathcal{C}(\cI,\mathcal{C}_{*}(X,\cM_+(Y)))$. We will show $\cA[\eta,h]\nu_{\cdot}\in \mathcal{C}(\cI,\mathcal{C}_{*}(X,\cM_+(Y)))$. It suffices to show that
the continuity of measures in $x$ is preserved: $x\mapsto\nu_0^x\circ \Phi_{t,0}^x[\eta,\nu_{\cdot},h]$ is continuous.  Indeed,
\begin{align*}
  &d_{\sf BL}(\Phi_{t,0}^x[\eta,\nu_{\cdot},h]\#\nu_0^x,\Phi_{t,0}^{x'}[\eta,\nu_{\cdot},h]\#\nu_0^{x'})\\
  =&\sup_{f\in\BL_1(Y)}\lt|\int_Yf\circ \Phi_{t,0}^x[\eta,\nu_{\cdot},h]\phi\rd\nu_0^x(\phi)-f\circ \Phi_{t,0}^{x'}[\eta,\nu_{\cdot},h]\phi\rd\nu_0^{x'}(\phi)\rt|\\
  \le&\int_Y\lt|\Phi_{t,0}^x[\eta,\nu_{\cdot},h]\phi-\Phi^{x'}_{t,0}[\eta,\nu_{\cdot},h]\phi\rt|
  \rd\nu_0^x(\phi)\\&+\sup_{f\in\BL_1(Y)}\lt|\int_Yf\circ\cS_{t,0}^{x'}[\eta,\nu_{\cdot},h]\phi
  \rd(\nu_0^x(\phi)-\nu_0^{x'}(\phi))\rt|.
\end{align*}
It follows from \eqref{Lip-phi} that $\Phi_{t,0}^{x'}[\eta,\nu_{\cdot},h]\phi$ is Lipschitz continuous in $\phi$ with constant $\e^{L_1(\nu_{\cdot})T}$, and from \eqref{Eq-A} it follows that \begin{equation*}
  \frac{f\circ\cS_{t,0}^{x'}[\eta,\nu_{\cdot},h]}{\textnormal{e}^{L_1(\nu_{\cdot})T}}\in\BL_1(Y).
  \end{equation*}
In addition, from \eqref{V-Lip}, we have
\begin{align*}
 &|V[\eta,\nu_{\cdot},h](\tau,x,\Phi_{\tau,0}^x[\eta,\nu_{\cdot},h]\phi)
  -V[\eta,\nu_{\cdot},h](\tau,x',\Phi_{\tau,0}^{x'}[\eta,\nu_{\cdot},h]\phi)|\\
  \le&|V[\eta,\nu_{\cdot},h](\tau,x,\Phi_{\tau,0}^x[\eta,\nu_{\cdot},h]\phi)
  -V[\eta,\nu_{\cdot},h](\tau,x,\Phi_{\tau,0}^{x'}[\eta,\nu_{\cdot},h]\phi)|\\
  &+|V[\eta,\nu_{\cdot},h](\tau,x,\Phi_{\tau,0}^{x'}[\eta,\nu_{\cdot},h]\phi)
  -V[\eta,\nu_{\cdot},h](\tau,x',\Phi_{\tau,0}^{x'}[\eta,\nu_{\cdot},h]\phi)|\\
 \le&L_1(\nu_{\cdot})
 |\Phi_{\tau,0}^x[\eta,\nu_{\cdot},h]\phi-\Phi_{\tau,0}^{x'}[\eta,\nu_{\cdot},h]\phi|
 +\sum_{\ell\in J}\Bigl|\int_{X^{\ell-1}}\int_{Y^{k_{\ell-1}}}g_{\ell}(\tau,\Phi_{\tau,0}^{x'}
 [\eta,\nu_{\cdot},h]\phi,\psi_1,\ldots,
 \psi_{\ell-1}))\\
 &\otimes_{j=1}^{\ell-1}\rd\nu_{\tau}^{y_j}(\psi_j)
\rd(\eta_{\ell}^x(y_1,\ldots,y_{\ell-1})-\eta_{\ell}^{x'}(y_1,\ldots,y_{\ell-1}))\Bigr|\\
 &+|h(\tau,x,\Phi_{\tau,0}^{x'}[\eta,\nu_{\cdot},h]\phi)-h(\tau,x',\Phi_{\tau,0}^{x'}[\eta,\nu_{\cdot},h]\phi)|\\
 \le&L_1(\nu_{\cdot})
 |\Phi_{\tau,0}^x[\eta,\nu_{\cdot},h]\phi-\Phi_{\tau,0}^{x'}[\eta,\nu_{\cdot},h]\phi|+\sup_{\varphi\in Y}|h(\tau,x,\varphi)-h(\tau,x',\varphi)|\\
 &+\sum_{\ell\in J}\Bigl|\int_{X^{\ell-1}}\int_{Y^{\ell-1}}g_{\ell}(\tau,\Phi_{\tau,0}^{x'}
 [\eta,\nu_{\cdot},h]\phi,
 \psi_1,\ldots,\psi_{\ell-1})
 \otimes_{j=1}^{\ell-1}\rd\nu_{\tau}^{y_j}(\psi_j)\\
 &\rd(\eta_{\ell}^x(y_1,\ldots,y_{\ell-1})-\eta^{x'}_{\ell}(y_1,\ldots,y_{\ell-1}))\Bigr|\\
 \le& L_1(\nu_{\cdot})
 |\Phi_{\tau,0}^x[\eta,\nu_{\cdot},h]\phi-\Phi_{\tau,0}^{x'}[\eta,\nu_{\cdot},h]\phi|+\sup_{\varphi\in Y}|h(\tau,x,\varphi)-h(\tau,x',\varphi)|\\
 &+\sum_{\ell\in J}\Bigl|\int_{X^{\ell-1}}\Bigl(\int_{Y^{\ell-1}}g_{\ell}
 (\tau,\Phi_{\tau,0}^{x'}[\eta,\nu_{\cdot},h]\phi,
 \psi_1,\ldots,\psi_{\ell-1})
 \otimes_{j=1}^{\ell-1}\rd\nu_{\tau}^{y_j}(\psi_j)\Bigr)\\
 &\rd(\eta_{\ell}^x(y_1,\ldots,y_{\ell-1})-\eta^{x'}_{\ell}(y_1,\ldots,y_{\ell-1}))\Bigr|.
\end{align*}
This implies that
\begin{align*}
  &|\Phi_{t,0}^x[\eta,\nu_{\cdot},h]\phi-\Phi_{t,0}^{x'}[\eta,\nu_{\cdot},h]\phi|\\
  =&\int_0^t|V[\eta,\nu_{\cdot},h](\tau,x,\Phi_{\tau,0}^x[\eta,\nu_{\cdot},h]\phi)
  -V[\eta,\nu_{\cdot},h](\tau,x',\Phi_{\tau,0}^{x'}[\eta,\nu_{\cdot},h]\phi)|\rd\tau\\
  \le&L_1(\nu_{\cdot})
 \int_0^t|\Phi_{\tau,0}^x[\eta,\nu_{\cdot},h]\phi-\Phi_{\tau,0}^{x'}[\eta,\nu_{\cdot},h]\phi|\rd\tau
 +\int_0^t\sup_{\varphi\in Y}|h(\tau,x,\varphi)-h(\tau,x',\varphi)|\rd\tau\\
 &+\sum_{\ell\in J}\int_0^t\Bigl|\int_{X^{\ell-1}}\int_{Y^{\ell-1}}g_{\ell}
 (\tau,\Phi_{\tau,0}^{x'}[\eta,\nu_{\cdot},h]\phi,\psi_1,\cdots,\psi_{\ell-1})
 \otimes_{j=1}^{\ell-1}\rd\nu_{\tau}^{y_j}(\psi_j)\\
 &\rd(\eta_{\ell}^x(y_1,\ldots,y_{\ell-1})-\eta_{\ell}^{x'}(y_1,\ldots,y_{\ell-1}))\Bigr|\rd\tau.
\end{align*}

By Gronwall's inequality,
\begin{align*}
  &|\Phi_{t,0}^x[\eta,\nu_{\cdot},h]\phi-\Phi_{t,0}^{x'}[\eta,\nu_{\cdot},h]\phi|\\
  \le&\Bigl(\sum_{\ell\in J}\int_0^t\Bigl|\int_{X^{\ell-1}}\int_{Y^{\ell-1}}g_{\ell}(\tau,
  \Phi_{\tau,0}^{x'}[\eta,\nu_{\cdot},h]\phi,\psi_1,\cdots,\psi_{\ell_k-1})
 \otimes_{j=1}^{\ell-1}\rd\nu_{\tau}^{y_j}(\psi_j)\rd(\eta_{\ell}^x(y_1,\ldots,y_{\ell-1})\\
 &-\eta_{\ell}^{x'}(y_1,\ldots,y_{\ell-1}))\Bigr|\rd\tau+\int_0^t\sup_{\varphi\in Y}|h(\tau,x,\varphi)-h(\tau,x',\varphi)|\rd\tau\Bigr)
 \textnormal{e}^{L_1(\nu_{\cdot})t}.
\end{align*}
By \eqref{Eq-A}, this further shows that
\begin{align}
 \nonumber &d_{\sf BL}(\Phi_{t,0}^x[\eta,\nu_{\cdot},h]\#\nu_0^x,\Phi_{t,0}^{x'}[\eta,\nu_{\cdot},h]\#\nu_0^{x'})\\
 \nonumber \le&\int_Y|\Phi_{t,0}^x[\eta,\nu_{\cdot},h]\phi
  -\Phi_{t,0}^{x'}[\eta,\nu_{\cdot},h]\phi|\rd\nu_0^x(\phi)+ \textnormal{e}^{L_1(\nu_{\cdot})T}d_{\sf BL}(\nu_0^x,\nu_0^{x'})\\
  \label{Eq-star-star} \le&\e^{L_1(\nu_{\cdot})T}\Bigl(\int_0^t\int_Y\sum_{\ell\in J}\Bigl|\int_{X^{\ell-1}}\int_{Y^{\ell-1}}g_{\ell}(\tau,
   \Phi_{\tau,0}^{x'}[\eta,\nu_{\cdot},h]\phi,\psi_1,\ldots,\psi_{\ell-1})
   \rd\otimes_{j=1}^{\ell-1}\nu_{\tau}^{y_j}(\psi_j)\\
   \nonumber&\rd(\eta^x_{\ell}(y_1,\ldots,y_{\ell-1})-\eta^{x'}_{\ell}(y_1,\ldots,y_{\ell-1}))\Bigr|
   \rd\nu_0^x(\phi)\rd\tau\\
\nonumber&+\|\nu_0\|\int_0^t
 \sup_{\varphi\in Y}|h(\tau,x,\varphi)-h(\tau,x',\varphi)|\rd\tau+d_{\sf BL}(\nu_0^x,\nu_0^{x'})\Bigr).
\end{align}
Since $\nu_{\cdot}\in \mathcal{C}(\cI,\mathcal{C}_{*}(X,\cM_+(Y))$, by Proposition~\ref{prop-nu}(iv), $\otimes_{j=1}^{\ell-1}\nu_t^{y_j}$ is weakly continuous in $(y_1,\ldots,y_{\ell-1})$.
By $(\mathbf{A2})$, $g_{\ell}$ is bounded Lipschitz continuous in $\phi,\psi_1,\ldots,\psi_{\ell-1}$; and $\Phi_{\tau,0}^{x'}[\eta,\nu_{\cdot},h]\phi$ is Lipschitz continuous in $\phi$ by \eqref{Lip-phi}, we have $g_{\ell}(\tau,\Phi_{\tau,0}^{x'}[\eta,\nu_{\cdot},h]\phi,\psi_1,\ldots,$ $\psi_{\ell-1})$ is bounded continuous in $\phi$ and $\psi_1\ldots,\psi_{\ell-1}$. Hence for $\ell\in J$,\\ \noindent $\int_{Y^{\ell-1}}g_{\ell}(\tau,\Phi_{\tau,0}^{x'}[\eta,\nu_{\cdot},h]\phi,
\psi_1,\ldots,\psi_{\ell-1})
 \rd\otimes_{j=1}^{\ell-1}\nu_{\tau}^{y_j}(\psi_j)$ is continuous in $(y_1,\ldots,y_{\ell-1})$. Moreover, $$\Bigl|\int_{Y^{\ell-1}}g_{\ell}(\tau,
 \Phi_{\tau,0}^{x'}[\eta,\nu_{\cdot},h]\phi,\psi_1,\ldots,\psi_{\ell-1})
 \rd\otimes_{j=1}^{\ell-1}\nu_{\tau}^{y_j}(\psi_j)\Bigr|\le\BL(g_{\ell})
 \|\nu_{\cdot}\|^{\ell-1}<\infty$$ is also uniformly bounded for $x\in X$. Hence by Proposition~\ref{prop-nu}(iii), we know
\begin{equation}\label{Eq-B}
\begin{split}
  &\lim_{|x-x'|\to0}\Bigl|\int_{X^{\ell-1}}\int_{Y^{\ell-1}}g_{\ell}(\tau,\Phi_{\tau,0}^{x'}
  [\eta,\nu_{\cdot},h]\phi,\psi_1,\ldots,\psi_{\ell-1})
 \rd\otimes_{j=1}^{\ell-1}\nu_{\tau}^{y_j}(\psi_j)\\
 &\rd(\eta_{\ell}^x(y_1,\ldots,y_{\ell-1})-\eta_{\ell}^{x'}(y_1,\ldots,y_{\ell-1}))\Bigr|=0,
\end{split} \end{equation} since $\eta\in \mathcal{C}(X,\cM_+(X^{\ell-1}))$.
Notice that \begin{align*}
   &\int_0^t\Bigl|\int_{X^{\ell-1}}\int_{Y^{\ell-1}}g_{\ell}(\tau,
   \Phi_{\tau,0}^y[\eta,\nu_{\cdot},h]\phi,
   \psi_1,\ldots,\psi_{\ell-1})
 \rd\otimes_{j=1}^{\ell-1}\nu_{\tau}^{y_j}(\psi_j)\rd\eta_{\ell}^x(y_1,\ldots,y_{\ell-1})
 \Bigr|\rd\tau\\
 \le&\BL(g_{\ell})\|\nu_{\cdot}\|^{\ell-1}\|\eta_{\ell}\|T<\infty,
  \end{align*}
 by the Dominated Convergence Theorem, it follows from \eqref{Eq-B} that
 \begin{align*}
 &\lim_{|x-x'|\to0}\int_0^t\Bigl|\int_{X^{\ell-1}}\int_{Y}\ldots\int_Y
 g_{\ell}(\tau,\Phi_{\tau,0}^{x'}[\eta,\nu_{\cdot},h]\phi,\psi_1,\ldots,\psi_{\ell-1})
 \rd\otimes_{j=1}^{\ell-1}\nu_{\tau}^{y_j}(\psi_j)\\
 &\rd(\eta^x_{\ell}(y_1,\ldots,y_{\ell-1})-\eta^{x'}_{\ell}(y_1,\ldots,y_{\ell-1}))\Bigr|\rd\tau=0.
 \end{align*}
Moreover, by
 $(\mathbf{A7})$ as well as the Dominated Convergence Theorem again,
 \[\lim_{|x-x'|\to0}\int_0^t\sup_{\varphi\in Y}|h(\tau,x,\varphi)-h(\tau,x',\varphi)|\rd\tau=0.\] Since
$\nu_0\in\mathcal{C}_{*}(X,\cM_+(Y))$, we have $$\lim_{|x-x'|\to0}d_{\sf BL}(\nu_0^x,\nu_0^{x'})=0.$$ Hence from \eqref{Eq-star-star} it follows that
\[\lim_{|x-x'|\to0}d_{\sf BL}(\Phi_{t,0}^x[\eta,\nu_{\cdot},h]\#\nu_0^x,\Phi_{t,0}^{x'}[\eta,\nu_{\cdot},h]\#\nu_0^{x'})=0.\]
\end{enumerate}
{The absolute continuity of solutions follow from  Proposition~\ref{prop-continuousdependence}(iv).}
In the following, we prove properties (i)-(iii) item by item.
\begin{enumerate}
\item[(i)] Lipschitz continuity in $\nu_{0}$. It follows from Proposition~\ref{prop-continuousdependence}(ii) via Gronwall inequality.
\item[(ii)] Lipschitz continuity of $\nu_{\cdot}$ in $h$. Assume $\nu^1_0=\nu^2_0$.

We first need to establish the Lipschitz continuity for $\Phi^x_{s,t}[h]$. Note that
\begin{align*}
  &|\Phi^x_{0,t}[\nu^1,h_1]\phi-\Phi^x_{0,t}[\nu^2,h_2]\phi|\\
\le&|\Phi^x_{0,t}[\nu^1,h_1]\phi-\Phi^x_{0,t}[\nu^1,h_2]\phi|+|\Phi^x_{0,t}[\nu^1,h_2]\phi-\Phi^x_{0,t}[\nu^2,h_2]\phi|.
\end{align*}
The second term follows from \eqref{term-2}. The estimate for the first term follows from Proposition~\ref{prop-continuousdependence}~(iii).  It follows from \eqref{V-Lip} that
\begin{align*}
  \le&\lt|\int_0^t\lt(V[\nu^1,h_1](\tau,x,\Phi^x_{0,\tau}[h_1]\phi)
  -V[\nu^1,h_2](\tau,x,\Phi^x_{0,\tau}[h_2]\phi)\rt)\rd\tau\rt|\\
  \le&\int_0^t\lt|V[\nu^1,h_1](\tau,x,\Phi^x_{0,\tau}[h_1]\phi)
  -V[\nu^1,h_2](\tau,x,\Phi^x_{0,\tau}[h_1]\phi(x))\rt|\rd\tau\\
  &+\int_0^t\lt|V[\nu^1,h_2](\tau,x,\Phi^x_{0,\tau}[h_1]\phi)
  -V[\nu^1,h_2](\tau,x,\Phi^x_{0,\tau}[h_2]\phi)\rt|\rd\tau\\
  \le&\int_0^t\lt|h_1(\tau,x,\Phi^x_{0,\tau}[h_1]\phi(x))
  -h_2(\tau,x,\Phi^x_{0,\tau}[h_1]\phi(x))\rt|\rd\tau\\
  &+L_1(\nu_{\cdot})\int_0^t\lt|\Phi^x_{0,\tau}[h_1]\phi-\Phi^x_{0,\tau}[h_2]\phi\rt|\rd\tau.
\end{align*}
By Gronwall's inequality, we have
\begin{align*}
\lt|\Phi^x_{0,t}[\nu^1,h_1]\phi-\Phi^x_{0,t}[\nu^2,h_2]\phi\rt|
\le&\textnormal{e}^{L_1(\nu_{\cdot})t}\int_0^t|h_1(\tau,x,\Phi^x_{0,\tau}[h_1]\phi)
-h_2(\tau,x,\Phi^x_{0,\tau}[h_1]\phi)|\rd\tau.
\end{align*}
Hence \begin{align*}
  &d_{\sf BL}(\Phi^x_{0,t}[h_1]\#\nu_0^x,\Phi^x_{0,t}[h_2]\#\nu_0^x)\\
  =&\sup_{f\in\mathcal{BL}_1(Y)}\int_Yf(\phi)\rd(\Phi^x_{0,t}[h_1]\#\nu_0^x-\Phi^x_{0,t}[h_2]\#\nu_0^x)\\
  =&\sup_{f\in\mathcal{BL}_1(Y)}\int_Y\lt((f\circ \Phi^x_{t,0}[h_1])(\phi)
  -(f\circ \Phi^x_{t,0}[h_2])(\phi)\rt)\rd\nu_0^x(\phi)\\
  \le&\int_Y\lt|\Phi^x_{t,0}[h_1]\phi
  -\Phi^x_{t,0}[h_2]\phi)\rt|\rd\nu_0^x(\phi)\\
  \le& \textnormal{e}^{L_1(\nu_{\cdot})t}\int_0^t\int_Y|h_1(\tau,x,\Phi^x_{0,\tau}[h_1]\phi)
  -h_2(\tau,x,\Phi^x_{0,\tau}[h_1]\phi)|\rd\nu_0^x(\phi)\rd\tau\\
  \le&\textnormal{e}^{L_1(\nu_{\cdot})t}\int_0^t\int_Y|h_1(\tau,x,\phi)
  -h_2(\tau,x,\phi)|\rd\nu_{\tau}^x(\phi)\rd\tau\\
  \le&L_3\|h_1-h_2\|_{\infty,\cI},
\end{align*}
where $L_3$ is defined in \eqref{Eq-L3}, which further implies that
\begin{align*}
&d_{\infty}(\mathcal{A}[\eta,h_1](\nu_{t}),\mathcal{A}[\eta,h_2](\nu_{t}))\\
=&\sup_{x\in X}d_{\sf BL}(\Phi^x_{0,t}[h_1]\#\nu_0^x,\Phi^x_{0,t}[h_2]\#\nu_0^x)\le L_3(\nu_{\cdot})\|h_1-h_2\|_{\infty,\cI}.
\end{align*}
\item[(iii)] Continuous dependence on $\eta$. Since $\nu_0\in \mathcal{C}_{*}(X,\cM_+(Y))$, we have $\nu_{\cdot}\in\mathcal{C}(\cI,\mathcal{C}_{*}$ $(X,\cM_+(Y)))$. {Based on the continuous dependence on the initial distributions proved in (i), as well as a triangle inequality, it suffices to prove the case assuming $$\nu^K_0=\nu_0$$} Let $\nu^K_{\cdot}\in \mathcal{C}(\mathcal{I},\mathcal{B}_{*}(X,\cM_+(Y)))$ with $\nu^K_0=\nu_0$ be the solutions to the fixed point equations
    \[\nu_t=\cA[\eta,h]\nu_t,\quad \nu^K_t=\cA[\eta^K,h]\nu^K_t,\quad t\in\cI,\]
    where $\eta^K=(\eta^\ell)_{\ell=1}^r$.
    Assume $$\lim_{K\to\infty}d_{\infty}(\eta^{\ell},\eta^{K,\ell})=0,\quad \ell\in J.$$ In the following, we show
    \[\lim_{K\to\infty}d_{\infty}(\cA[\eta,h]\nu_t,\cA[\eta^K,h]\nu^K_t)=0,\quad t\in\cI.\]
By the triangle inequality,    \begin{alignat}{2}
\nonumber      d_{\sf BL}(\nu_t^x,\nu^{K,x}_t)=&d_{\sf BL}(\Phi^x_{t,0}[\eta,\nu_{\cdot}]_{\#}\nu_0^x, \Phi^x_{t,0}[\eta^K,\nu^K_{\cdot}]_{\#}\nu_0^x)\\
\label{Eq-10}      \le&d_{\sf BL}(\Phi^x_{t,0}[\eta^K,\nu_{\cdot}]_{\#}\nu_0^x, \Phi^x_{t,0}[\eta^K,\nu^K_{\cdot}]_{\#}\nu_0^x)\\
\nonumber      &+d_{\sf BL}(\Phi^x_{t,0}[\eta,\nu_{\cdot}]_{\#}\nu_0^x, \Phi^x_{t,0}[\eta^K,\nu_{\cdot}]_{\#}\nu_0^x).
    \end{alignat}
    From \eqref{term-1} it follows that
   \begin{align}
   \nonumber&d_{\sf BL}(\Phi^x_{t,0}[\eta^K,\nu_{\cdot}]_{\#}\nu_0^x, \Phi^x_{t,0}[\eta^K,\nu^K_{\cdot}]_{\#}\nu_0^x)\\
   \nonumber\le&\int_Y|\Phi_{t,0}^x[\eta^K,\nu_{\cdot}]\phi
   -\Phi^x_{t,0}[\eta^K,\nu_{\cdot}^K]\phi|\rd\nu_0^x(\phi)\eqqcolon\beta^K_x(t),\\
   \label{Eq-11}\le& L_{2,K}(\eta^K)\|\nu_{\cdot}\|\textnormal{e}^{L_{1,K}(\nu_{\cdot}^K)t}
   \int_0^td_{\infty}(\nu_{\tau},\nu^K_{\tau})\textnormal{e}^{-L_{1,K}(\nu_{\cdot}^K)\tau}\rd\tau,
   \end{align}
    where the index $K$ in the constants indicates the dependence on $K$.

   We now estimate the second term. By \eqref{V-Lip},
\begin{align*}
  &d_{\sf BL}(\Phi^x_{t,0}[\eta,\nu_{\cdot}]_{\#}\nu_0^x, \Phi^x_{t,0}[\eta^K,\nu_{\cdot}]_{\#}\nu_0^x)\\
  =&\sup_{f\in\mathcal{BL}_1(Y)}\int_Yf\rd(\Phi^x_{t,0}[\eta,\nu_{\cdot}]_{\#}\nu_0^x-\Phi^x_{t,0}[\eta^K,\nu_{\cdot}]_{\#}\nu_0^x)\\
  =&\sup_{f\in\mathcal{BL}_1(Y)}\int_Y\lt((f\circ \Phi^x_{t,0}[\eta,\nu_{\cdot}])(\phi)-(f\circ \Phi^x_{t,0}[\eta^K,\nu_{\cdot}])(\phi)\rt)\rd\nu_0^x(\phi)\\
  \le&\int_Y|\Phi^x_{t,0}[\eta,\nu_{\cdot}]\phi-\Phi^x_{t,0}[\eta^K,\nu_{\cdot}]\phi)|\rd\nu_0^x(\phi)
  =\colon\gamma^K_x(t)\\
  =&\int_Y\lt|\int_0^t(V[\eta,\nu_{\cdot}](\tau,x,\Phi^x_{\tau,0}[\eta,\nu_{\cdot}]\phi)
  -V[\eta^K,\nu_{\cdot}](\tau,x,\Phi^x_{\tau,0}[\eta^K,\nu_{\cdot}]\phi))
  \mathrm{d}\tau\rt|
  \mathrm{d}\nu_0^x(\phi)\\
  \le&\int_Y\lt|\int_0^t(V[\eta,\nu_{\cdot}](\tau,x,\Phi^x_{\tau,0}[\eta,\nu_{\cdot}]\phi)
  -V[\eta,\nu_{\cdot}](\tau,x,\Phi^x_{\tau,0}[\eta^K,\nu_{\cdot}]\phi))
  \mathrm{d}\tau\rt|
  \mathrm{d}\nu_0^x(\phi)\\
  &+\int_Y\lt|\int_0^t(V[\eta,\nu_{\cdot}](\tau,x,\Phi^x_{\tau,0}[\eta^K,\nu_{\cdot}]\phi)
  -V[\eta^K,\nu_{\cdot}](\tau,x,\Phi^x_{\tau,0}[\eta^K,\nu_{\cdot}]\phi))
  \mathrm{d}\tau\rt|
  \mathrm{d}\nu_0^x(\phi)\\
  \le&\int_Y\int_0^t\lt|V[\eta,\nu_{\cdot}](\tau,x,\Phi^x_{\tau,0}[\eta,\nu_{\cdot}]\phi)
  -V[\eta,\nu_{\cdot}](\tau,x,\Phi^x_{\tau,0}[\eta^K,\nu_{\cdot}]\phi)\rt|
  \mathrm{d}\tau\mathrm{d}\nu_0^x(\phi)\\
  &+\int_Y\int_0^t\lt|V[\eta,\nu_{\cdot}](\tau,x,\Phi^x_{\tau,0}[\eta^K,\nu_{\cdot}]\phi)
  -V[\eta^K,\nu_{\cdot}](\tau,x,\Phi^x_{\tau,0}[\eta^K,\nu_{\cdot}]\phi)\rt|
  \mathrm{d}\tau\mathrm{d}\nu_0^x(\phi)\\
  \le&L_1(\nu_{\cdot})\int_0^t\int_Y\lt|\Phi^x_{\tau,0}[\eta,\nu_{\cdot}]\phi
  -\Phi^x_{\tau,0}[\eta^K,\nu_{\cdot}]\phi\rt|
\mathrm{d}\nu_0^x(\phi)\mathrm{d}\tau\\
  &+\int_0^t\int_Y\lt|V[\eta,\nu_{\cdot}](\tau,x,\Phi^x_{\tau,0}[\eta^K,\nu_{\cdot}]\phi)
  -V[\eta,\nu_{\cdot}](\tau,x,\Phi^x_{\tau,0}[\eta^K,\nu^K_{\cdot}]\phi)\rt|
\mathrm{d}\nu_0^x(\phi)\mathrm{d}\tau\\
  +&\int_0^t\int_Y\lt|V[\eta,\nu_{\cdot}](\tau,x,\Phi^x_{\tau,0}[\eta^K,\nu^K_{\cdot}]\phi)
  -V[\eta^K,\nu_{\cdot}](\tau,x,\Phi^x_{\tau,0}[\eta^K,\nu^K_{\cdot}]\phi)\rt|
  \mathrm{d}\nu_0^x(\phi)\mathrm{d}\tau\\
  +&\int_0^t\int_Y\lt|V[\eta^K,\nu_{\cdot}](\tau,x,\Phi^x_{\tau,0}[\eta^K,\nu^K_{\cdot}]\phi)
  -V[\eta^K,\nu_{\cdot}](\tau,x,\Phi^x_{\tau,0}[\eta^K,\nu_{\cdot}]\phi)\rt|
\mathrm{d}\nu_0^x(\phi)\mathrm{d}\tau\\
  \le&L_1(\nu_{\cdot})\int_0^t\gamma^K_x(\tau)\rd\tau
  +L_1(\nu_{\cdot})\int_0^t\int_Y\lt|\Phi^x_{\tau,0}[\eta^K,\nu_{\cdot}]\phi
  -\Phi^x_{\tau,0}[\eta^K,\nu^K_{\cdot}]\phi\rt|
\mathrm{d}\nu_0^x(\phi)\mathrm{d}\tau\\
&+\int_0^t\int_Y\lt|V[\eta,\nu_{\cdot}](\tau,x,\phi)
  -V[\eta^K,\nu_{\cdot}](\tau,x,\phi)\rt|
\mathrm{d}\nu^{K,x}_{\tau}(\phi)\mathrm{d}\tau\\
&+L_{1,K}(\nu_{\cdot})\int_0^t\int_Y\lt|\Phi^x_{\tau,0}[\eta^K,\nu^K_{\cdot}]\phi
-\Phi^x_{\tau,0}[\eta^K,\nu_{\cdot}]\phi\rt|
\mathrm{d}\nu_0^x(\psi)\mathrm{d}\tau \\
=&L_1(\nu_{\cdot})\int_0^t\gamma^K_x(\tau)\rd\tau+(
L_1(\nu_{\cdot})+L_{1,K}(\nu_{\cdot}))\int_0^t\beta^K_x(\tau)\rd\tau\\
&+\int_0^t\int_Y\lt|V[\eta,\nu_{\cdot}](\tau,x,\phi)
  -V[\eta^K,\nu_{\cdot}](\tau,x,\phi)\rt|
\mathrm{d}\nu^{K,x}_{\tau}(\phi)\mathrm{d}\tau.
\end{align*}
To obtain further estimates, let $$\zeta^K_x(\tau)\coloneqq\int_Y\lt|V[\eta,\nu_{\cdot}](\tau,x,\phi)
  -V[\eta^K,\nu_{\cdot}](\tau,x,\phi)\rt|
\mathrm{d}\nu_{\tau}^x(\phi).$$ By the triangle inequality,
\begin{align*}
&\int_0^t\int_Y\lt|V[\eta,\nu_{\cdot}](\tau,x,\phi)
  -V[\eta^K,\nu_{\cdot}](\tau,x,\phi)\rt|
\mathrm{d}\nu^{K,x}_{\tau}(\phi)\mathrm{d}\tau\\
\le& \int_0^t\zeta^K_x(\tau)\mathrm{d}\tau+\int_0^t\lt|\int_Y\lt|V[\eta,\nu_{\cdot}](\tau,x,\phi)
  -V[\eta^K,\nu_{\cdot}](\tau,x,\phi)\rt|
\mathrm{d}(\nu^{K,x}_{\tau}(\phi)-\nu_{\tau}^x(\phi))\rt|\mathrm{d}\tau.
\end{align*}
Recall that $$\sup_{x\in X}\eta^{K,x}_{\ell}(Y)\le \sup_{x\in X}\Bigl(\eta^{x}_{\ell}(Y)+d_{\infty}(\eta^x_{\ell},\eta^{K,x}_{\ell})\Bigr),\quad \ell\in J.$$ Moreover, since $$\lim_{k\to\infty}d_{\infty}(\nu_0^K,\nu_0)=0$$ and $$\sum_{\ell\in J}\left|\|\eta_{\ell}\|-\|\eta^{K}_{\ell}\|\right|\le\sum_{\ell\in J}d_{\infty}(\eta_{\ell},
\eta^{K}_{\ell})\to0,\quad \text{as}\quad K\to\infty,$$ we have there exists some $b>0$ independent of $K$ such that \begin{equation}\label{Eq-Star-1}\sup_{K\in\mathbb{N}}(L_1(\nu_{\cdot})+L_{1,K}(\nu_{\cdot}))\le b,\quad \sup_{K\in\mathbb{N}}(L_2(\eta,\nu_{\cdot},\nu_{\cdot}^K)+L_2(\eta^K,\nu_{\cdot},\nu_{\cdot}^K))\le b.\end{equation} Let
$$f_K(\tau,x,\varphi)\coloneqq\lt|V[\eta,\nu_{\cdot}](\tau,x,\varphi)
  -V[\eta^K,\nu_{\cdot}](\tau,x,\varphi)\rt|.$$
Using analogous arguments as in the proof for the limit \cite[(A.2)]{KX21}, we have
 \[\lim_{K\to\infty}\sup_{x\in X}|f_K(\tau,x,\varphi)|=0,\]
 which further implies that $f_K$ is bounded.  Moreover, it follows from \eqref{V-Lip} again that \begin{align*}
  &|f_K(\tau,x,\varphi)-f_K(\tau,x,\phi)|\\
  \le&\lt|V[\eta,\nu_{\cdot}](\tau,x,\varphi)
  -V[\eta,\nu_{\cdot}](\tau,x,\phi)\rt|+\lt|V[\eta^K,\nu_{\cdot}](\tau,x,\varphi)
  -V[\eta^K,\nu_{\cdot}](\tau,x,\phi)\rt|\\
  \le&(L_1+L_{1,K})|\varphi-\phi|\le b|\varphi-\phi|.
\end{align*}
Further, by \eqref{V-Lip}, one can show that $f_K(\tau,x,\varphi)$ is bounded Lipschitz continuous in $\varphi$ with some constant $\widehat{b}>0$ such that $$\sup_{K\in\N}\sup_{\tau\in\cI}\sup_{x\in X}\BL(f_K(\tau,x,\cdot))\le \widehat{b}.$$ Hence
  \begin{align*}
  &\Bigl|\int_0^t\int_Y\lt|V[\eta,\nu_{\cdot}](\tau,x,\phi)
  -V[\eta^K,\nu_{\cdot}](\tau,x,\phi)\rt|
\mathrm{d}(\nu^{K,x}_{\tau}(\phi)-\nu_{\tau}^x(\phi))\mathrm{d}\tau\Bigr|
\le \widehat{b}\int_0^td_{\sf BL}(\nu^{K,x}_{\tau},\nu_{\tau}^x)\rd\tau.\end{align*}

This further implies that
\begin{align*}
\gamma^K_x(t)\le& L_1\int_0^t\gamma^K_x(\tau)\rd\tau+b\int_0^t\beta^K_x(\tau)\rd\tau
+\widehat{b}\int_0^td_{\sf BL}(\nu^{K,x}_{\tau},\nu_{\tau}^x)\rd\tau+\int_0^t\zeta^K_x(\tau)\rd\tau.
\end{align*}
By Gronwall's inequality, we have
\begin{align*}
  \gamma^K_x(t)\le \textnormal{e}^{L_1t}\lt(b\int_0^t\beta_x(\tau)\rd\tau
  +\widehat{b}\int_0^td_{\sf BL}(\nu^{K,x}_{\tau},\nu_{\tau}^x)\rd\tau
  +\int_0^t\zeta^K_x(\tau)\rd\tau\rt)
  \end{align*}
Hence by \eqref{term-1}, \eqref{Eq-10}, \eqref{Eq-11}, and \eqref{Eq-Star-1}, we have for $t\in\cI$,
\begin{align*}
&d_{\sf BL}(\nu_t^x,\nu^{K,x}_t)\le\beta^K_x(t)+\gamma^K_x(t)\\
\le& \beta^K_x(t)+\textnormal{e}^{L_1t}\lt(b\int_0^t\beta_x(\tau)\rd\tau
+\widehat{b}\int_0^td_{\sf BL}(\nu^{K,x}_{\tau},\nu_{\tau}^x)\rd\tau
+\int_0^t\zeta^K_x(\tau)\rd\tau\rt)\\
\le&
L_{2,K}\|\nu_{\cdot}\|\int_0^t\textnormal{e}^{L_{1,K}(t-\tau)}d_{\infty}(\nu^K_{\tau},\nu_{\tau})\rd\tau\\
&+\textnormal{e}^{L_1t}b\int_0^t\beta_x(\tau)\rd\tau+\widehat{b}\textnormal{e}^{L_1t}\int_0^td_{\sf BL}(\nu^{K,x}_{\tau},\nu_{\tau}^x)\rd\tau
+\textnormal{e}^{L_1t}\int_0^t\zeta^K_x(\tau)\rd\tau\\
\le&(b-L_2)\|\nu_{\cdot}\|\int_0^t\textnormal{e}^{(b-L_1)(t-\tau)}d_{\infty}(\nu^K_{\tau},\nu_{\tau})\rd\tau\\
&+\textnormal{e}^{L_1t}b\int_0^t(b-L_2)\|\nu_{\cdot}\|\int_0^{\tau}\textnormal{e}^{(b-L_1)(\tau-s)}
d_{\infty}(\nu^K_{s},\nu_{s})\rd s\rd\tau\\
&+\widehat{b}\textnormal{e}^{L_1t}\int_0^td_{\sf BL}(\nu^{K,x}_{\tau},\nu_{\tau}^x)\rd\tau
+\textnormal{e}^{L_1t}\int_0^t\zeta^K_x(\tau)\rd\tau\\
\le&(b-L_2)\|\nu_{\cdot}\|(1+\textnormal{e}^{L_1t}bt)\int_0^t\textnormal{e}^{(b-L_1)(t-\tau)}d_{\infty}
(\nu^K_{\tau},\nu_{\tau})\rd\tau\\
&+\widehat{b}\textnormal{e}^{L_1t}\int_0^td_{\sf BL}(\nu^{K,x}_{\tau},\nu_{\tau}^x)\rd\tau
+\textnormal{e}^{L_1t}\int_0^t\zeta^K_x(\tau)\rd\tau\\
   \le&(b-L_2)\|\nu_{\cdot}\|(1+bT)\textnormal{e}^{L_1t}\int_0^t\textnormal{e}^{(b-L_1)(t-\tau)}d_{\infty}
   (\nu^K_{\tau},\nu_{\tau})\rd\tau\\
&+\widehat{b}\textnormal{e}^{L_1t}\int_0^td_{\sf BL}(\nu^{K,x}_{\tau},\nu_{\tau}^x)\rd\tau
+\textnormal{e}^{L_1t}\int_0^t\zeta^K_x(\tau)\rd\tau\\
\le& L_4\int_0^td_{\infty}(\nu_{\tau}^K,\nu_{\tau})\rd\tau+\textnormal{e}^{L_1T}\int_0^t\zeta_x^K(\tau)\rd\tau,
\end{align*}
where $L_4\coloneqq \textnormal{e}^{bT}(b-L_2)\|\nu_{\cdot}\|(1+bT)+\textnormal{e}^{L_1T}\widehat{b}$. By Gronwall's inequality,
\[d_{\infty}(\nu_t^K,\nu_t)\le\textnormal{e}^{L_4+L_1T}\sup_{x\in X}\int_0^t\zeta^{K}_{x}(\tau)\rd\tau.\]

To show $\lim_{K\to\infty}d_0(\nu^K_{\cdot},\nu_{\cdot})=0$, it suffices to show $$\lim_{K\to\infty}\sup_{x\in X}\int_0^T\zeta^{K}_{x}(\tau)\rd\tau=0.$$
For every $t\in\cI$, define $\widehat{\nu}_t\equiv\sup_{x\in X}\nu_t^x$:
\[\widehat{\nu}_t(E)=\sup_{x\in X}\nu_t^x(E),\quad \forall E\in\mathcal{B}(Y).\] Since $\nu_t\in \mathcal{B}(X,\cM_+(Y))$, it is easy to show that $\widehat{\nu}_t\in\cM_+(Y)$.
By Fatou's lemma, \begin{align*}
\sup_{x\in X}\zeta_x^K(\tau)=&\sup_{x\in X}\int_Y\lt|V[\eta,\nu_{\cdot}](\tau,x,\phi)
  -V[\eta^K,\nu_{\cdot}](\tau,x,\phi)\rt|
\mathrm{d}\nu_{\tau}^x(\phi)\\
\le&\sup_{x\in X}\int_Y\lt|V[\eta,\nu_{\cdot}](\tau,x,\phi)
  -V[\eta^K,\nu_{\cdot}](\tau,x,\phi)\rt|
\mathrm{d}\widehat{\nu}_{\tau}(\phi)\\
\le&\int_Y\sup_{x\in X}\lt|V[\eta,\nu_{\cdot}](\tau,x,\phi)
  -V[\eta^K,\nu_{\cdot}](\tau,x,\phi)\rt|
\mathrm{d}\widehat{\nu}_{\tau}(\phi).\end{align*}
Since $\nu_{\cdot}\in\cB(\cI,\cM_+(Y))$, we have $\widehat{\nu}_{\cdot}\in\cB(\cI,\cM_+(Y))$. Using analogous arguments as those for proving \cite[Proposition~3.2]{KX21}, we have \begin{align*}
&\sup_{x\in X}\int_0^T\zeta_x^K(\tau)\rd\tau\le\int_0^T\sup_{x\in X}\zeta_x^K(\tau)\rd\tau\\
\le&\int_0^T\int_Y\sup_{x\in X}\lt|V[\eta,\nu_{\cdot}](\tau,x,\phi)
  -V[\eta^K,\nu_{\cdot}](\tau,x,\phi)\rt|
\mathrm{d}\widehat{\nu}_{\tau}(\phi)\rd\tau\to0,\quad \text{as}\ K\to\infty.
\end{align*}
\end{enumerate}
\end{proof}

\end{document}